\documentclass[12pt,english]{article}
\usepackage{mathptmx}

\usepackage[T1]{fontenc}
\usepackage[latin9]{inputenc}
\usepackage[a4paper]{geometry}
\geometry{verbose,tmargin=2cm,bmargin=2cm,lmargin=2.5cm,rmargin=2.5cm}
\usepackage{babel}
\usepackage{amsmath}
\usepackage{amsthm}
\usepackage{amssymb}
\usepackage{setspace}
\usepackage[numbers,square,sort,comma,numbers]{natbib}
\usepackage[all]{xy}
\onehalfspacing
\usepackage[unicode=true,
 bookmarks=true,bookmarksnumbered=false,bookmarksopen=false,
 breaklinks=false,pdfborder={0 0 1},backref=false,colorlinks=false]
 {hyperref}
\hypersetup{pdftitle={Use Lyx for typesetting: A template for empirical research projects},
 pdfauthor={First Author and Second Author}}

\makeatletter
\hypersetup{
    colorlinks=true,       
    linkcolor=blue,          
    citecolor=blue,        
    filecolor=blue,      
    urlcolor=blue,           
    breaklinks=true
}

\usepackage{booktabs}
\usepackage{longtable}
\setlength{\bibsep}{2pt plus 0.05ex}
\usepackage{pdflscape}
\usepackage[square,sort,comma,numbers]{natbib}
\usepackage{amsthm}
\usepackage[title]{appendix}

\makeatother

\begin{document}
\title{\onehalfspacing{}Salter's question on the image of the Burau representation of $B_{3}$}
\author{Donsung Lee}
\date{May 30, 2024}

\maketitle
\medskip{}

\begin{abstract}
\begin{spacing}{0.9}
\noindent In 1974, Birman posed the question of under what conditions
a matrix with Laurent polynomial entries is in the image of the Burau
representation. In 1984, Squier observed that the matrices in the
image are contained in a unitary group. In 2021, Salter formulated
a specific question: whether the central quotient of the Burau image
group is the central quotient of a certain subgroup of the unitary
group. We solve this question negatively in the simplest nontrivial
case, $n=3$, algorithmically constructing a counterexample.\vspace{1cm}

\noindent \textbf{Keywords:} braid group, Burau representation\medskip{}

\noindent \textbf{Mathematics Subject Classification 2020:} 20F36,
20C99, 20E05 
\end{spacing}
\end{abstract}
\noindent $ $\theoremstyle{definition}
\theoremstyle{remark}
\newtheorem{theorem}{Theorem}[section]
\newtheorem{lemma}[theorem]{Lemma}
\newtheorem{corollary}[theorem]{Corollary}
\newtheorem{question}[theorem]{Question}
\newtheorem{remark}[theorem]{Remark}
\newtheorem{example}[theorem]{Example}
\newtheorem{definition}[theorem]{Definition}
\newtheorem{algorithm}[theorem]{Algorithm}
\newtheorem{conjecture}[theorem]{Conjecture}

\newtheorem*{ack}{Acknowledgements}
\newtheorem*{theoremA}{Theorem \textnormal{A}}
\newtheorem*{question1}{Question \textnormal{1}}
\newtheorem*{question2}{Question \textnormal{2}}
\newtheorem*{claim1}{Claim \textnormal{1}}
\newtheorem*{proofclaim1}{Proof of Claim \textnormal{1}}
\newtheorem*{claim2}{Claim \textnormal{2}}
\newtheorem*{proofclaim2}{Proof of Claim \textnormal{2}}
\newtheorem*{prooftheorem49}{Proof of Theorem \textnormal{4.9}}
\newtheorem*{proofofa}{Proof of \textnormal{(a)}}
\newtheorem*{proofofb}{Proof of \textnormal{(b)}}
\newtheorem*{prooftheorem11}{Proof of Theorem \textnormal{1.1}}

\section{Introduction}

The \emph{Burau representation} is a fundamental linear representation
for \emph{braid groups} $B_{n}$, which has the generators $\sigma_{1},\,\sigma_{2},\cdots,\,\sigma_{n-1}$.
For each integer $i$ such that $1\le i\le n-1$, the representation
$\beta_{n}$ is given explicitly by
\begin{center}
$\sigma_{i}\mapsto\left(\begin{array}{cccc}
I_{i-1} & 0 & 0 & 0\\
0 & 1-t & t & 0\\
0 & 1 & 0 & 0\\
0 & 0 & 0 & I_{n-i-1}
\end{array}\right)\in\mathrm{GL}\left(n,\,\mathbb{Z}\left[t,t^{-1}\right]\right)$.
\par\end{center}

The \emph{reduced} Burau representation from $B_{n}$ to $\mathrm{GL}\left(n-1,\,\mathbb{Z}\left[t,t^{-1}\right]\right)$
can be defined, but will be not dealt with in this paper. In 1974,
Birman (\citep{MR0375281}, Research Problem 14) asked how the image
of $\beta_{n}$ is characterized in $\mathrm{GL}\left(n,\,\mathbb{Z}\left[t,t^{-1}\right]\right)$
as a group of matrices. In 1984, Squier in \citep{MR0727232} observed
that the elements in the image satisfy the \emph{unitarity}. If we
put
\begin{align*}
J_{n} & :=\left(\begin{array}{ccccc}
1 & -t^{-1} & -t^{-1} & \cdots & -t^{-1}\\
-t & 1 & -t^{-1} & \cdots & -t^{-1}\\
-t & -t & 1 & \cdots & -t^{-1}\\
\vdots & \vdots & \vdots & \ddots & \vdots\\
-t & -t & -t & \cdots & 1
\end{array}\right),
\end{align*}
then any element $A\in\beta_{n}\left(B_{n}\right)\subset\mathrm{GL}\left(n,\,\mathbb{Z}\left[t,t^{-1}\right]\right)$
satisfies the \emph{unitarity relation}
\begin{align}
\overline{A}J_{n}A^{T} & =J_{n},
\end{align}
where for a matrix $A$ with Laurent polynomial entries, $A\mapsto\overline{A}$
maps $t\mapsto t^{-1}$ for every entry. Define further for any Laurent
polynomial $f\left(t\right)\in\mathbb{Z}\left[t,\,t^{-1}\right]$,
$\overline{f\left(t\right)}:=f\left(t^{-1}\right)$. From now on,
let us consider only the $n=3$ case. Define the \emph{Burau image
group}
\begin{align*}
\mathrm{Bu} & :=\beta_{3}\left(B_{3}\right)=\left\langle \left(\begin{array}{ccc}
1-t & t & 0\\
1 & 0 & 0\\
0 & 0 & 1
\end{array}\right),\,\left(\begin{array}{ccc}
1 & 0 & 0\\
0 & 1-t & t\\
0 & 1 & 0
\end{array}\right)\right\rangle .
\end{align*}

In 1996, Dehornoy in \citep{MR1381685} derived a kind of functional
equations on the entries of matrices in $\mathrm{Bu}$, based on the
unitarity. In 2021, Salter in \citep{MR4228497} pointed out several
algebraic properties satisfied by the elements in $\mathrm{Bu}$ and
defined a group $\Gamma$ as a subgroup of the unitary group defined
by the properties. Denote by $\overline{\Gamma}$ the quotient of
$\Gamma$ by its center by the quotient map $q:\Gamma\to\overline{\Gamma}$.
Then, he posed a question whether the composition $q\circ\beta_{3}:B_{3}\to\overline{\Gamma}$
is surjective. In fact, his question was about every braid group $B_{n}$,
for an integer $n\ge2$. Since the case $n=2$ is easy to handle,
we concentrate on the simplest nontrivial case $n=3$.

The question was an attempt to answer Birman's old problem. If it
were true, we could characterize $\mathrm{Bu}$ algebraically, or
in other words, by the set of solutions of a finite set of equations.
In this paper, we negatively solve it.

\noindent \begin{theorem}

The map $q\circ\beta_{3}:B_{3}\to\overline{\Gamma}$ is not surjective.
Moreover, we have $\left[\overline{\Gamma}:q\circ\beta_{3}\left(B_{3}\right)\right]=\infty$.

\noindent \end{theorem}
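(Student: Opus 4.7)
The plan is to construct an element of $\Gamma$ that provably lies outside $\mathrm{Bu}\cdot Z(\Gamma)$, and then to leverage it to exhibit infinitely many $q(\mathrm{Bu})$-cosets in $\overline{\Gamma}$. The first step is to set up a constructive membership test for $\mathrm{Bu}$ modulo its center. Since $B_{3}/Z(B_{3})\cong\mathrm{PSL}(2,\mathbb{Z})\cong\mathbb{Z}/2\ast\mathbb{Z}/3$, every element of $\mathrm{Bu}$ is, up to a central factor, a uniquely determined alternating product in the images of $\sigma_{1}\sigma_{2}\sigma_{1}$ (order $2$) and $\sigma_{1}\sigma_{2}$ (order $3$). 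I would equip $3\times 3$ Laurent-polynomial matrices with an integer complexity --- for instance, the sum over all entries of the $t$-degree span (highest minus lowest exponent), or a translation length coming from the Bass--Serre tree of $\mathrm{PSL}(2,\mathbb{Z})$ --- and verify that whenever a non-central $A\in\mathrm{Bu}$ is given, at least one of the four products $\beta_{3}(\sigma_{i})^{\pm 1}A$ strictly decreases the complexity. Iteration then yields a finite-time algorithm whose termination at a central matrix is equivalent to $A\in\mathrm{Bu}\cdot Z(\Gamma)$.

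The second step is to produce an explicit matrix $M\in\Gamma$ on which this reduction fails. I would parameterize a $3\times 3$ matrix by Laurent polynomials of bounded $t$-degree with indeterminate integer coefficients, impose the unitarity relation $\overline{M}J_{3}M^{T}=J_{3}$ together with Salter's further defining equations for $\Gamma$, and search for a solution none of whose four trial multiplications by $\beta_{3}(\sigma_{i})^{\pm 1}$ lowers the complexity. By the first step, such an $M$ cannot lie in $\mathrm{Bu}\cdot Z(\Gamma)$, so $q(M)\notin q(\mathrm{Bu})$. This is the step I expect to be the most delicate: the relations defining $\Gamma$ form a system of polynomial identities in many unknowns, and locating a concrete integer solution that is simultaneously irreducible under all four Burau generators is likely to require a guided computer search rather than a closed-form construction, in line with the paper's claim of an \emph{algorithmic} counterexample.

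The third step promotes non-surjectivity to infinite index. One clean route is to exhibit a $\mathbb{Z}$-valued homomorphism $\chi:\Gamma\to\mathbb{Z}$ that vanishes on $Z(\Gamma)$ and on $\mathrm{Bu}$ but sends $M$ to a nonzero integer, which immediately yields $[\overline{\Gamma}:q\circ\beta_{3}(B_{3})]=\infty$ via the pairwise distinct cosets $q(M^{k})$. A more concrete alternative is to re-run the algorithm of the first step on each power $M^{k}$ and observe that their complexities grow without bound, forcing every $M^{k}$ to remain outside $\mathrm{Bu}\cdot Z(\Gamma)$. Throughout, the chief obstacle is the second step: producing $M$ and certifying \emph{both} that it satisfies every defining relation of $\Gamma$ exactly \emph{and} that it is irreducible under all four generator multiplications, the latter being not a single computation but a finite case analysis whose correctness is tightly coupled to the complexity chosen in step one.
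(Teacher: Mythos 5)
Your overall architecture (produce an explicit $M\in\Gamma$, certify $q(M)\notin q(\mathrm{Bu})$ by an irreducibility test, then promote to infinite index) is in the right spirit, but each of your three steps hides the actual content of the theorem. In Step 1, the claim that every non-central $A\in\mathrm{Bu}$ admits a complexity-decreasing multiplication by some $\beta(\sigma_i)^{\pm1}$ --- and that failure of this reduction certifies non-membership --- is precisely the hard part, and you only assert it. The paper earns the analogous statement by first compressing $\Gamma$ into $\mathrm{PGL}(2,\cdot)$ via two injective homomorphisms $\overline\phi$ and $\overline\rho$, proving $\overline{\Gamma}\cong F\rtimes\mathrm{PSL}(2,\mathbb{Z})$, and then showing that the ambient ``quaternionic group'' $\mathcal{Q}$ is \emph{free} on explicit generators $\overline g[r]$ by a ping-pong argument with carefully designed invariants (balancedness, type); only then does ``no generator multiplication reduces the degree'' become a valid certificate. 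In Step 2, your direct search over bounded-degree $3\times3$ matrices with indeterminate integer coefficients is not merely delicate but unworkable: the counterexample the paper finds is a word of length about $40$ in the free generators, with entries of relative degree $40$ (and the minimal known examples have relative degree $22$), so the quadratic unitarity system you propose to solve has on the order of a hundred integer unknowns. Worse, you have no a priori guarantee that any such $M$ exists, so your search has no termination criterion; the paper replaces this by a purely number-theoretic search (gcd conditions on ``reductive factors'') through words in the free generators, which reduces the existence question to the solvability of $f\overline f+(1+t^{-1}+t)g\overline g=1$ with $g\neq0$ over $\mathbb{Z}[t,t^{-1}]$.

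Step 3 contains an actual error and an unsupported hope. The existence of a homomorphism $\chi:\Gamma\to\mathbb{Z}$ killing $\mathrm{Bu}$ and $Z(\Gamma)$ but not $M$ is itself a strong structural claim about $\overline{\Gamma}$ (it amounts to nonvanishing of the coinvariants of the $\mathrm{PSL}(2,\mathbb{Z})$-action on $F^{\mathrm{ab}}$), which you cannot assume. And your fallback --- ``the complexities of $M^k$ grow without bound, forcing every $M^k$ to remain outside $\mathrm{Bu}\cdot Z(\Gamma)$'' --- is a non sequitur: $\mathrm{Bu}$ itself contains elements of arbitrarily large complexity, so unbounded growth proves nothing, and re-running the irreducibility test on each $M^k$ separately is infinitely many verifications, not a proof. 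What actually closes this step in the paper is structural: the deviating subgroup $F=\ker\epsilon$ is a normal complement to a copy of $\mathrm{PSL}(2,\mathbb{Z})$ and is free (as a subgroup of the free group $\mathcal{Q}$), so $F\neq1$ immediately gives $[\overline{\Gamma}:q\circ\beta_3(B_3)]=|F|=\infty$. You need some substitute for this --- at minimum a ping-pong argument showing $\langle M\rangle\cap\mathrm{Bu}\cdot Z(\Gamma)=1$ --- and your proposal does not supply one.
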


It is well-known that the quotient group of $B_{3}$ by its center
is isomorphic to the modular group $\mathrm{PSL}\left(2,\,\mathbb{Z}\right)$.
One of the key ideas is to reinterpret the Burau representation $\beta_{3}$
through a map
\begin{align*}
\mu & :\mathrm{PSL}\left(2,\,\mathbb{Z}\right)\to\mathrm{PGL}\left(2,\,\mathbb{Z}\left[t,\,t^{-1},\,\left(1+t\right)^{-1}\right]\right),
\end{align*}
defined by
\begin{align*}
\mu\left[\begin{array}{cc}
1 & 0\\
1 & 1
\end{array}\right] & :=\left[\begin{array}{cc}
1 & 0\\
0 & -t
\end{array}\right],\;\mu\left[\begin{array}{cc}
1 & -1\\
0 & 1
\end{array}\right]:=\left[\begin{array}{cc}
-\frac{t^{2}}{1+t} & \frac{t}{1+t}\\
\frac{1+t+t^{2}}{1+t} & \frac{1}{1+t}
\end{array}\right],
\end{align*}
where we used the convention that a matrix in the square brakets $\left[\right]$
is included in a projective linear group, while a usual matrix is
in the round brakets $\left(\right)$. By using the map $\mu$, we
will show that the group $\overline{\Gamma}$ is isomorphic to a semidirect
product
\begin{align*}
F\rtimes\mathrm{PSL}\left(2,\,\mathbb{Z}\right),
\end{align*}
where $F$ is a free group (Theorem 3.17 and Corollary 4.11). We will
see that the surjectivity of $q\circ\beta_{3}$ is equivalent to $F$
being trivial (Lemma 4.1). This is also equivalent to the equation
\begin{align*}
f\overline{f}+\left(1+t^{-1}+t\right)g\overline{g} & =1
\end{align*}
having no solution $\left(f,\,g\right)\in\mathbb{Z}\left[t,t^{-1}\right]^{2}$
such that $g\ne0$ (Corollary 4.7). Finally, we algorithmically find
such a solution for this equation to prove Theorem 1.1 in Section
5.

The rest of this paper is organized as follows. In Section 2, we introduce
basic definitions and Salter's question. In Section 3, we introduce
two homomorphisms $\overline{\phi}$ and $\overline{\rho}$, and prove
that $\overline{\Gamma}$ is isomorphic to the semidirect product.
In Section 4, we define the quaternionic group $\mathcal{Q}$ in $\mathrm{PGL}\left(2,\,\mathbb{Q}\left[t,t^{-1}\right]\right)$
and prove a structure theorem on it to prove $F$ is free. In Section
5, we prove Theorem 1.1.

\noindent \begin{ack}

\noindent This paper is supported by \emph{National Research Foundation
of Korea (grant number 2020R1C1C1A01006819).} The author thanks Dohyeong
Kim and Sang-hyun Kim for valuable and encouraging comments.

\noindent \end{ack}

\section{Salter's Question}

We review some basic properties of the Burau representation. Define
two vectors
\begin{align*}
v & :=\left(t,\,t^{2},\,t^{3}\right),\;\overrightarrow{1}:=\left(1,\,1,\,1\right)^{T}.
\end{align*}

Every matrix $A\in\mathrm{Bu}$ satisfies the following.

\noindent 
\begin{align}
 & A\overrightarrow{1}=\overrightarrow{1},\\
 & vA=v,
\end{align}

\noindent where we followed Salter's notation, except that the matrix
transpose was taken. This convention is due to \citep{MR1731872}
and \citep{MR2435235}. Another property of the Burau image that Salter
noticed is that the image of the evaluation of $\mathrm{Bu}$ at $t=1$
is isomorphic to the symmetric group $S_{3}$, by the usual permutation
representation. Abusing notation, we identify $S_{3}$ with the representation
in $\mathrm{GL}\left(3,\,\mathbb{Z}\right)$. Then, he formally defined
the target group $\Gamma$ in \citep[Definition 2.5]{MR4228497}.

\noindent \begin{definition}

The group $\Gamma$ is defined as follows:
\begin{align*}
\Gamma & :=\left\{ A\in\mathrm{GL}\left(3,\,\mathbb{Z}\left[t,t^{-1}\right]\right)\::\:vA=v,\:A\overrightarrow{1}=\overrightarrow{1},\:\overline{A}J_{3}A^{T}=J_{3},\:A|_{t=1}\in S_{3}\right\} .
\end{align*}
\end{definition}

Let $\overline{\Gamma}$ be the quotient of $\Gamma$ by its center.
In some contexts, we will also call $\overline{\Gamma}$ the target
group, if there is no possibility for confusion. Define the map $\overline{\beta}_{3}:B_{3}\to\overline{\Gamma}$
to be the composition of the quotient $q:\Gamma\to\overline{\Gamma}$
and the Burau representation $\beta_{3}$. From now on, we omit the
subscript from $\beta_{3}$ (resp. $\overline{\beta_{3}}$) and write
it just as $\beta$ (resp. $\overline{\beta}$), since we only deal
with the case $n=3$ in this paper. Here is Salter's question on $B_{3}$
\citep[Question 1.1]{MR4228497}.

\noindent \begin{question}

Is the map $\overline{\beta}:B_{3}\to\overline{\Gamma}$ surjective?

\noindent \end{question}

For the time being, we will analyze just $\Gamma$ without taking
the quotient into account. To approach this question, we first define
a group similar to $\Gamma$.

\noindent \begin{definition}

The \emph{formal Burau group} $\mathcal{B}$ is defined as follows
\begin{align*}
\mathcal{B} & :=\left\{ A\in\mathrm{GL}\left(3,\,\mathbb{Z}\left[t,t^{-1}\right]\right)\::\:vA=v,\:A\overrightarrow{1}=\overrightarrow{1},\:\overline{A}J_{3}A^{T}=J_{3}\right\} .
\end{align*}
\end{definition}

By definition, the group $\Gamma$ is a subgroup of $\mathcal{B}$.
In fact, we will see later the omitted condition is redundant and
$\mathcal{B}=\Gamma$ (Lemma 3.15). Define a special braid $\Delta\in B_{3}$
to be
\begin{align*}
\Delta & :=\left(\sigma_{1}\sigma_{2}\sigma_{1}\right)^{2},
\end{align*}
and recall that the center of $B_{3}$ is generated by $\Delta$ \citep[Theorem 1.24]{MR2435235}.
We directly have that
\begin{align}
\beta\left(\Delta\right) & =\left(\begin{array}{ccc}
1-t+t^{3} & t-t^{2} & t^{2}-t^{3}\\
1-t & t-t^{2}+t^{3} & t^{2}-t^{3}\\
1-t & t-t^{2} & t^{2}
\end{array}\right).
\end{align}

\noindent \begin{remark}

In the definition of $\mathcal{B}$, we mention without proof that
the condition $vA=v$ may be replaced with the condition $A\beta\left(\Delta\right)=\beta\left(\Delta\right)A$.
It is a consequence of a direct computation assuming $A\overrightarrow{1}=\overrightarrow{1}$.

\noindent \end{remark}

According to the first two conditions (2) and (3) of $\mathcal{B}$,
we observe that for $A\in\mathcal{B}$, any four entries of $A$ determines
the others. Choosing the four at the upper left corner, we can write
the entries of $A$ as
\begin{align}
A & =\left(\begin{array}{ccc}
A_{11} & A_{12} & 1-A_{11}-A_{12}\\
A_{21} & A_{22} & 1-A_{21}-A_{22}\\
t^{-2}\left(1-A_{11}-tA_{21}\right) & t^{-2}\left(t-A_{12}-tA_{22}\right) & *
\end{array}\right),
\end{align}
where $*=t^{-2}\left(-1-t+t^{2}+A_{11}+A_{12}+tA_{21}+tA_{22}\right).$

Another remarkable property of $A\in\mathcal{B}$ is on its determinant.
Since we defined $\mathcal{B}$ as a subgroup of $\mathrm{GL}\left(3,\,\mathbb{Z}\left[t,t^{-1}\right]\right)$,
$\det\left(A\right)$ must be a unit of the base ring $\mathbb{Z}\left[t,t^{-1}\right]$.
Thus, the only possible candidate is $\pm t^{k}$ for some integer
$k$. In fact, we will show later that $\left(-t\right)^{k}$ for
some integer $k$ remains possible (Lemma 3.10).

\section{Target Group as a Semidirect Product}

In Section 1, we introduced a map 
\begin{align*}
\mu & :\mathrm{PSL}\left(2,\,\mathbb{Z}\right)\to\mathrm{PGL}\left(2,\,\mathbb{Z}\left[t,\,t^{-1},\,\left(1+t\right)^{-1}\right]\right),
\end{align*}
defined by
\begin{align*}
\mu\left[\begin{array}{cc}
1 & 0\\
1 & 1
\end{array}\right] & :=\left[\begin{array}{cc}
1 & 0\\
0 & -t
\end{array}\right],\;\mu\left[\begin{array}{cc}
1 & -1\\
0 & 1
\end{array}\right]:=\left[\begin{array}{cc}
-\frac{t^{2}}{1+t} & \frac{t}{1+t}\\
\frac{1+t+t^{2}}{1+t} & \frac{1}{1+t}
\end{array}\right].
\end{align*}

Put $s_{2}:=\left[\begin{array}{cc}
1 & 0\\
1 & 1
\end{array}\right]$ and $s_{1}:=\left[\begin{array}{cc}
1 & -1\\
0 & 1
\end{array}\right]$. Then, the well-definedness of $\mu$ is direct by computing the
following two relations of the modular group
\begin{align*}
s_{1}s_{2}s_{1} & =s_{2}s_{1}s_{2},\;\left(s_{1}s_{2}s_{1}\right)^{2}=1.
\end{align*}

The goal of this section is to describe the structure of Salter's
target group $\overline{\Gamma}$ by using the map $\mu$. Concretely,
we prove that
\begin{align*}
\overline{\Gamma} & \cong F\rtimes\mathrm{PSL}\left(2,\,\mathbb{Z}\right),
\end{align*}
which is given by Theorem 3.17. The key idea is to reduce the rank
by constructing the homomorphisms from the formal Burau group $\mathcal{B}$.

Let us look back to the unitarity $\overline{A}J_{3}A^{T}=J_{3}$.
A natural way to use this relation is to \emph{$*$}-diagonalize the
interposed matrix $J_{3}$. Let us define a matrix as
\begin{align*}
\xi & :=\left(\begin{array}{ccc}
t+t^{2} & 0 & -1\\
-1 & t & -1\\
-1 & -1 & -1
\end{array}\right).
\end{align*}

Then, if we define a matrix
\begin{align*}
D & :=\left(\overline{\xi}\right)^{-1}J_{3}\left(\xi^{T}\right)^{-1},
\end{align*}
by direct computation, we get a diagonal matrix

\begin{equation}
D=\left(\begin{array}{ccc}
\frac{t}{1+t+t^{2}} & 0 & 0\\
0 & 1 & 0\\
0 & 0 & -\frac{t}{1+t+t^{2}}
\end{array}\right).
\end{equation}

\noindent \begin{remark}

The introduction of the matrix $\xi$ may seem out of nowhere. We
briefly comment on how we found this. There is an established method
to the $*$-diagonalization of a matrix (for example, see \citep{MR2571854}).
We used the matrix $\Omega_{3}$ in \citep[p. 95]{MR2435235} defining
the equivalent unitarity relation to that by $J_{3}$ to compute the
$*$-cosquare $\overline{\Omega_{3}}\left(\Omega_{3}^{T}\right)^{-1}$,
and diagonalized it. The matrix $\xi$ is constructed by a proper
linear combination of the eigenvectors. We should mention that Chen,
Lubotzky and Tiep in \citep{2308.14302} already found a $*$-diagonalized
unitarity relation.

\noindent \end{remark}

Consider a matrix $A\in\mathcal{B}$. By (5), we express its entries
as
\begin{align*}
A & =\left(\begin{array}{ccc}
A_{11} & A_{12} & 1-A_{11}-A_{12}\\
A_{21} & A_{22} & 1-A_{21}-A_{22}\\
t^{-2}\left(1-A_{11}-tA_{21}\right) & t^{-2}\left(t-A_{12}-tA_{22}\right) & *
\end{array}\right).
\end{align*}

To apply the unitarity (1), for integers $1\le k,\,l\le2$, define
a Laurent polynomial
\begin{align*}
f_{kl} & :=\left(A\xi\right)_{kl}.
\end{align*}

Then, from the definition of $\xi_{3}$, we have
\begin{align}
f_{k1} & =A_{k1}\left(1+t+t^{2}\right)-1,\;f_{k2}=A_{k1}+A_{k2}\left(1+t\right)-1,\;k=1,2.
\end{align}

\noindent \begin{lemma}
\begin{description}
\item [{(a)}] Each matrix $A\in\mathcal{B}$, the determinant is expressed
by $\left\{ f_{kl}\right\} _{1\le k,\,l\le2}$ as
\begin{align}
\det\left(A\right) & =\frac{f_{11}f_{22}-f_{12}f_{21}}{t^{2}\left(1+t\right)}.
\end{align}
\item [{(b)}] The Laurent polynomials $\left\{ f_{kl}\right\} _{1\le k,\,l\le2}$
satisfies the three functional equations
\begin{align}
 & f_{k1}\overline{f_{k1}}+\left(1+t+t^{-1}\right)f_{k2}\overline{f_{k2}}=\frac{\left(1+t\right)^{2}}{t},\;k=1,2,\\
 & 1+t+t^{2}\left(\overline{f_{11}}f_{21}+\left(1+t+t^{-1}\right)\overline{f_{12}}f_{22}\right)=0.
\end{align}
\end{description}
\noindent \end{lemma}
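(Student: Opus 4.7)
The proof splits naturally into two computational claims, and the whole strategy is to exploit two specific features of the auxiliary matrix $\xi$: its third column is $-\overrightarrow{1}$, and the row vector $v$ annihilates the first two columns of $\xi$.

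For the structural setup, I would first observe that since the third column of $\xi$ equals $-\overrightarrow{1}$, the condition $A\overrightarrow{1}=\overrightarrow{1}$ immediately gives $(A\xi)_{k3}=-1$ for $k=1,2,3$. Next, a short direct computation shows $v\xi = (0,0,-t(1+t+t^{2}))$, so $vA=v$ forces
\begin{align*}
t f_{31} + \tfrac{1}{t} f_{11} + f_{21} &= 0 \quad\text{(after rescaling)}
\end{align*}
and similarly for $f_{32}$, hence $f_{31}=-t^{-2}f_{11}-t^{-1}f_{21}$ and $f_{32}=-t^{-2}f_{12}-t^{-1}f_{22}$. Thus the entire matrix $A\xi$ is expressible in terms of the four polynomials $f_{kl}$ with $1\le k,l\le 2$.

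For part (a), I would compute $\det(\xi)=-(1+t)(1+t+t^{2})$ directly by cofactor expansion, so that $\det(A\xi)=\det(A)\cdot\left(-(1+t)(1+t+t^{2})\right)$. On the other hand, expanding $\det(A\xi)$ along its third column (which is $(-1,-1,-1)^{T}$) and substituting the expressions for $f_{31},f_{32}$ obtained above, the three $2\times 2$ minors telescope: setting $d:=f_{11}f_{22}-f_{12}f_{21}$, one finds each minor is a rational multiple of $d$, and their signed sum collapses to $-d\cdot t^{-2}(1+t+t^{2})$. Equating the two expressions for $\det(A\xi)$ yields formula (8).

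For part (b), the key observation is that the definition $D=\overline{\xi}^{-1}J_{3}(\xi^{T})^{-1}$ may be rewritten as $J_{3}=\overline{\xi}D\xi^{T}$, so the unitarity relation $\overline{A}J_{3}A^{T}=J_{3}$ becomes
\begin{align*}
\overline{A\xi}\, D\, (A\xi)^{T} &= \overline{\xi}\, D\, \xi^{T}.
\end{align*}
Since $D$ is diagonal, the $(i,j)$ entry of the left side is $\sum_{k}d_{k}\overline{(A\xi)_{ik}}(A\xi)_{jk}$. I would then read off three specific entries: the $(1,1)$ entry gives the $k=1$ case of (9), the $(2,2)$ entry gives the $k=2$ case of (9), and the $(1,2)$ entry gives (10). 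In each case the term coming from the third column of $A\xi$ contributes only a constant (since that column is $-\overrightarrow{1}$), and the right-hand sides are obtained by computing $(\overline{\xi}D\xi^{T})_{11}=(\overline{\xi}D\xi^{T})_{22}=1$ and $(\overline{\xi}D\xi^{T})_{12}=-t^{-1}$. After clearing denominators by $\tfrac{1+t+t^{2}}{t}$ (respectively by $t^{2}$ for the cross term), the three stated equations drop out.

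The main obstacle is just bookkeeping: keeping track of signs and of the factor $1+t+t^{2}$ appearing in $D$, and verifying that the three specific entries of $\overline{\xi}D\xi^{T}$ simplify to the clean right-hand sides above. No conceptual difficulty arises; the lemma is essentially a direct consequence of the diagonalization (6) combined with the two linear conditions $A\overrightarrow{1}=\overrightarrow{1}$ and $vA=v$.
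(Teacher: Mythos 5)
Your proposal is correct and takes essentially the same route as the paper: the paper's proof likewise rewrites the unitarity as $\overline{A\xi}D\left(A\xi\right)^{T}=J_{3}$ and then declares everything ``direct from computation'' from (6) and (7). Your write-up supplies exactly the computations that declaration elides (the structure of the third column of $\xi$, the relation $v\xi=(0,0,-t(1+t+t^{2}))$ giving the third row of $A\xi$, $\det\xi=-(1+t)(1+t+t^{2})$, and the $(1,1)$, $(2,2)$, $(1,2)$ entries of the diagonalized unitarity), and they all check out.
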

\begin{proof}

\noindent Rewrite the unitarity satisfied by $A\in\mathcal{B}$ as
\begin{align*}
\overline{A\xi}D\left(A\xi\right)^{T} & =J_{3}.
\end{align*}

Then, the properties desired are direct from computation from (6)
and (7). $\qedhere$

\noindent \end{proof}

By observing Lemma 3.2 (a) and (b), we can infer that the polynomials
$\left\{ f_{kl}\right\} _{1\le k,\,l\le2}$ correspond to the entries
of a $2\times2$ matrix. Recall that we already have two well-known
homomorphisms from the Burau image group $\mathrm{Bu}$ to $2\times2$
linear groups. At first, there is an isomorphism from the unreduced
Burau image in $\mathrm{GL}\left(3,\mathbb{Z}\left[t,t^{-1}\right]\right)$
to the reduced Burau image in $\mathrm{GL}\left(2,\mathbb{Z}\left[t,t^{-1}\right]\right)$.
Next, there is a quotient homomorphism from the reduced Burau image
to the modular group $\mathrm{PSL}\left(2,\mathbb{Z}\right)$, induced
by the evaluation map at $t\mapsto-1$. We will reconstruct these
two homomorphisms in terms of $\left\{ f_{kl}\right\} _{1\le k,\,l\le2}$.

The first step is to rewrite the entries of a matrix $A\in\mathcal{B}$
in (5) by using $f_{ij}$ as
\begin{align}
A & =\left(\begin{array}{ccc}
\frac{1+f_{11}}{1+t+t^{2}} & \frac{-f_{11}+t\left(1+t\right)+f_{12}\left(1+t+t^{2}\right)}{\left(1+t\right)\left(1+t+t^{2}\right)} & *\\
\frac{1+f_{21}}{1+t+t^{2}} & \frac{-f_{21}+t\left(1+t\right)+f_{22}\left(1+t+t^{2}\right)}{\left(1+t\right)\left(1+t+t^{2}\right)} & *\\
* & * & *
\end{array}\right).
\end{align}

For integers $1\le i,j\le2$, let us introduce a new substitution
\begin{align*}
g_{ij} & :=\frac{f_{ij}}{t\left(1+t\right)},
\end{align*}
to reduce the $\left(1+t\right)$ factor in Lemma 3.2 (a). Then, $A$
can be written as
\begin{align}
A & =\left(\begin{array}{ccc}
\frac{1+g_{11}t\left(1+t\right)}{1+t+t^{2}} & \frac{t\left(1-g_{11}\right)}{\left(1+t+t^{2}\right)}+tg_{12} & *\\
\frac{1+g_{21}t\left(1+t\right)}{1+t+t^{2}} & \frac{t\left(1-g_{21}\right)}{\left(1+t+t^{2}\right)}+tg_{22} & *\\
* & * & *
\end{array}\right).
\end{align}

\noindent \begin{definition}

Define the \emph{first homomorphism} $\phi:\mathcal{B}\to\mathrm{GL}\left(2,R\left[t,t^{-1},\left(1+t\right)^{-1}\right]\right)$
by
\begin{align*}
A & \mapsto\left(\begin{array}{cc}
g_{11} & g_{12}\\
t^{-1}g_{11}+\left(1+t\right)g_{21} & t^{-1}g_{12}+\left(1+t\right)g_{22}
\end{array}\right).
\end{align*}
\end{definition}
\begin{lemma}

The map $\phi$ is an injective group homomorphism preserving the
determinant.

\noindent \end{lemma}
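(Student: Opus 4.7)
The plan is to realise $\phi(A)$ as the upper-left $2\times 2$ block of the conjugate $\xi^{-1}A\xi$, which will turn out to be block-diagonal. All three claims in the statement then fall out at once.

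First I would record two direct identities for $\xi$: its third column gives $\xi\,(0,0,1)^{T}=-\overrightarrow{1}$, and $v\xi=-(t+t^{2}+t^{3})\,(0,0,1)$. For any $A\in\mathcal{B}$, combining $A\overrightarrow{1}=\overrightarrow{1}$ with the first identity forces the third column of $\xi^{-1}A\xi$ to be $(0,0,1)^{T}$, and combining $vA=v$ with the second forces the third row to be $(0,0,1)$. Thus
\[
\xi^{-1}A\xi=\begin{pmatrix} M(A) & 0\\ 0 & 1\end{pmatrix}
\]
for some $2\times 2$ matrix $M(A)$, and the assignment $A\mapsto M(A)$ is a group homomorphism, being conjugation by $\xi$ followed by the projection onto the invariant block.

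Next I would verify $M(A)=\phi(A)$ by explicit computation. Expanding $\xi^{-1}$ via the adjugate produces denominators $(1+t)(1+t+t^{2})$, since $\det\xi=-(1+t)(1+t+t^{2})$, while the form of $A\xi$ derived from (5) has third column $-\overrightarrow{1}$ and bottom row determined from $f_{11},f_{12},f_{21},f_{22}$ via $vA\xi=v\xi$. Multiplying out the four entries of the upper-left block, the one-line identity $t(1+t)+1=1+t+t^{2}$ makes the spurious factor $(1+t+t^{2})$ in the first-row entries cancel, while the second-row entries never carried such a factor; the result matches the formula in Definition~3.4 exactly and lies in $\mathbb{Z}[t,t^{-1},(1+t)^{-1}]$.

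The remaining two conclusions then come for free from the block-diagonal form: $\det\phi(A)=\det M(A)=\det(\xi^{-1}A\xi)=\det A$, and $\phi(A)=\phi(B)$ forces $\xi^{-1}A\xi=\xi^{-1}B\xi$, whence $A=B$. The main (and essentially only) obstacle is the bookkeeping of the four-entry matching, which is mechanical but must be carried out carefully to see the cancellation of $(1+t+t^{2})$.
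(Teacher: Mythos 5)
Your proof is correct and takes a genuinely different, more conceptual route than the paper. The paper's proof of Lemma~3.4 simply asserts that $\phi(A_1)\phi(A_2)=\phi(A_1A_2)$ by direct computation, then verifies the determinant claim from the formula $\det\phi(A)=(1+t)(g_{11}g_{22}-g_{12}g_{21})$ and the identity (8), and finally checks injectivity by setting $\phi(A)=I$ and solving for the entries of $A$ one at a time. You instead observe that $A\overrightarrow{1}=\overrightarrow{1}$ together with $\xi(0,0,1)^{T}=-\overrightarrow{1}$ forces the third column of $\xi^{-1}A\xi$ to be $(0,0,1)^{T}$, and $vA=v$ together with $v\xi=-(t+t^2+t^3)(0,0,1)$ forces its third row to be $(0,0,1)$; hence $\xi^{-1}A\xi$ is block-diagonal, and once one checks that the upper-left block reproduces the defining formula of $\phi$ (I verified this, including the cancellation of $(1+t+t^2)$ you flag), all three conclusions follow in a single stroke from conjugation being an automorphism. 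Your approach buys genuine insight: it reveals that $\phi$ is not an ad-hoc formula but the restriction of conjugation by $\xi$ to an invariant $2\times 2$ block, which explains simultaneously why $\phi$ is multiplicative, injective, and determinant-preserving, whereas the paper's approach keeps the computations short at the cost of leaving the underlying structure opaque. The only slips are cosmetic: the defining formula you cite is Definition~3.3 rather than 3.4, and the block matching, while "mechanical," is still the non-trivial core of the argument and should be written out in a final version; as written it is carried by the correct observations that $\det\xi=-(1+t)(1+t+t^2)$, that $(0,0,1)\xi^{-1}=-\frac{1}{t+t^2+t^3}\,v$, and that the numerators collapse via $t(1+t)+1=1+t+t^2$.
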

\begin{proof}

\noindent The ``homomorphism'' part follows by computing directly
for any pair $A_{1},\,A_{2}\in\mathcal{B}$,
\begin{align*}
\phi\left(A_{1}\right)\phi\left(A_{2}\right) & =\phi\left(A_{1}A_{2}\right).
\end{align*}

By definition, the determinant of the image
\begin{align*}
 & \,\det\phi\left(A\right)\\
= & \,g_{11}\left(t^{-1}g_{12}+\left(1+t\right)g_{22}\right)-g_{12}\left(t^{-1}g_{11}+\left(1+t\right)g_{21}\right)\\
= & \,\left(1+t\right)\left(g_{11}g_{22}-g_{12}g_{21}\right),
\end{align*}
where we conclude that $\det\left(A\right)=\det\phi\left(A\right)$
by (8).

Finally, we show the injectivity. For a matrix $A\in\mathcal{B}$,
suppose
\begin{align*}
\phi\left(A\right) & =\left(\begin{array}{cc}
g_{11} & g_{12}\\
t^{-1}g_{11}+\left(1+t\right)g_{21} & t^{-1}g_{12}+\left(1+t\right)g_{22}
\end{array}\right)=\left(\begin{array}{cc}
1 & 0\\
0 & 1
\end{array}\right).
\end{align*}

By (7) and the definition of $g_{ij}$, we have
\begin{align*}
 & t\left(1+t\right)=f_{11}=A_{11}\left(1+t+t^{2}\right)-1\Rightarrow A_{11}=1,\\
 & 0=f_{12}=A_{11}+A_{12}\left(1+t\right)-1\Rightarrow A_{12}=0,\\
 & 0=t^{-1}\left(1+f_{21}\right)=t^{-1}\left(A_{21}\left(1+t+t^{2}\right)\right)\Rightarrow A_{21}=0,\\
 & 1=t^{-1}f_{22}=t^{-1}\left(A_{21}+A_{22}\left(1+t\right)-1\right)\Rightarrow A_{22}=1,
\end{align*}
and since the other entries of $A$ are determined by these four entries,
we conclude that $A$ is the identity matrix. $\qedhere$

\noindent \end{proof}

We now express $f_{21}$ and $f_{22}$ in terms of $f_{11}$, $f_{12}$
and the determinant, by using the functional equations in Lemma 3.2.

\noindent \begin{lemma}

For a matrix $A\in\mathcal{B}$, when we write the entries of $A$
in terms of $\left\{ f_{kl}\right\} _{1\le k,\,l\le2}$ as in (11),
we have
\begin{align}
 & f_{21}=-\left(\frac{f_{11}+\overline{f_{12}}t^{3}\left(1+t+t^{2}\right)\det\left(A\right)}{t\left(1+t\right)}\right),\\
 & f_{22}=\frac{-f_{12}+\overline{f_{11}}t^{4}\det\left(A\right)}{t\left(1+t\right)}.
\end{align}
\end{lemma}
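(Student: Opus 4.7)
The plan is to treat equations (8) and (10) as a linear system in the two unknowns $f_{21}$ and $f_{22}$, with coefficients that are polynomials in $f_{11}, \overline{f_{11}}, f_{12}, \overline{f_{12}}$, and then solve it by Cramer's rule. Concretely, rewriting equation (8) gives the first linear equation
\[
-f_{12}\,f_{21} + f_{11}\,f_{22} \;=\; t^{2}(1+t)\det(A),
\]
and multiplying equation (10) through by $t$ and rearranging gives the second linear equation
\[
\overline{f_{11}}\,f_{21} + (1+t+t^{-1})\overline{f_{12}}\,f_{22} \;=\; -\tfrac{1+t}{t^{2}}.
\]

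The key observation is that the determinant of the $2\times 2$ coefficient matrix of this system is exactly
\[
-\bigl(f_{11}\overline{f_{11}} + (1+t+t^{-1})f_{12}\overline{f_{12}}\bigr),
\]
which by the $k=1$ case of Lemma 3.2 (b), equation (9), equals $-(1+t)^{2}/t$. In particular it is a nonzero element of the fraction field, so Cramer's rule applies and yields a unique solution for $(f_{21},f_{22})$.

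The rest of the argument is a mechanical application of Cramer's rule. For $f_{21}$ one replaces the first column by the right-hand-side vector, computes the determinant $t^{2}(1+t)(1+t+t^{-1})\overline{f_{12}}\det(A) + (1+t)f_{11}/t^{2}$, divides by $-(1+t)^{2}/t$, and simplifies the powers of $t$ together with the factor $(1+t+t^{-1})\cdot t = 1+t+t^{2}$ to obtain formula (13). For $f_{22}$ one does the analogous computation with the second column replaced, giving formula (14). There is essentially no conceptual obstacle; the only real risk is a bookkeeping error in the signs or in the $t$-powers, so I would organize the simplification by factoring $(1+t)$ out of the numerator at the earliest opportunity before cancelling against the $(1+t)^{2}$ coming from the system determinant.
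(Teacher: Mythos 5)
Your proposal is correct, and it is a genuinely cleaner route than the one in the paper. The paper reaches (13) and (14) by a substitution-style elimination: it first assumes $f_{12}\ne 0$, solves (9) and (8) separately for $f_{22}$, equates the two expressions to extract $f_{21}$, substitutes back into (10), and then runs a second argument for the case $f_{12}=0$ (which needs the auxiliary functional equation $f_{11}\overline{f_{11}}=(1+t)(1+t^{-1})$). Along the way it also has to justify $f_{11}\ne 0$ by a separate appeal to (9). Your version treats (8) and (10) as a $2\times2$ linear system in $(f_{21},f_{22})$ over the fraction field, observes that the system determinant is $-\bigl(f_{11}\overline{f_{11}}+(1+t+t^{-1})f_{12}\overline{f_{12}}\bigr) = -(1+t)^{2}/t$ by the $k=1$ case of (9), and applies Cramer's rule. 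Because this determinant is a fixed nonzero rational function \emph{independent of $f_{11},f_{12}$}, the case split and the nonvanishing lemma both evaporate; the argument is uniform. I checked the Cramer computation and the simplification: for $f_{21}$, the numerator determinant is $t^{2}(1+t)(1+t+t^{-1})\overline{f_{12}}\det(A)+(1+t)f_{11}/t^{2}$, and after factoring $(1+t)$, clearing denominators, and using $t^{3}(1+t+t^{-1})=t^{2}(1+t+t^{2})$, one gets exactly (13); the $f_{22}$ computation likewise reproduces (14). The only thing worth making explicit in a final write-up is the one-line identification of the system determinant with $-(1+t)^{2}/t$ via (9), since that is the step that makes the case analysis unnecessary.
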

\begin{proof}

\noindent Suppose $f_{12}\ne0$. The equation (9) implies
\begin{align}
f_{22} & =\frac{-1-t-\overline{f_{11}}f_{21}t^{2}}{\overline{f_{12}}t\left(1+t+t^{2}\right)}.
\end{align}

On the other hand, the equation (8) implies
\begin{align}
f_{22} & =\frac{f_{12}f_{21}+t^{2}\left(1+t\right)\det\left(A\right)}{f_{11}},
\end{align}
where $f_{11}\ne0$. Otherwise, from (9) we have $\left(1+t+t^{-1}\right)f_{12}\overline{f_{12}}=\frac{\left(1+t\right)^{2}}{t}$,
which is impossible for a Laurent polynomial $f_{12}\in\mathbb{Z}\left[t,t^{-1}\right]$.
Equating the right-hand sides of (15) and (16), we obtain (13):
\begin{align*}
f_{21} & =-\left(\frac{f_{11}+\overline{f_{12}}t^{3}\left(1+t+t^{2}\right)\det\left(A\right)}{t\left(1+t\right)}\right),
\end{align*}
where the denominator is simplified by (9). Putting (13) into the
equation (10), we have (14):
\begin{align*}
f_{22} & =\frac{-f_{12}+\overline{f_{11}}t^{4}\det\left(A\right)}{t\left(1+t\right)},
\end{align*}
where the numerator is simplified by (9).

On the other hand, suppose $f_{12}=0$. From (10) and (8), we have
\begin{align}
f_{21} & =\frac{-1-t}{\overline{f_{11}}t^{2}},\;f_{22}=\frac{t^{2}\left(1+t\right)\det\left(A\right)}{f_{11}}.
\end{align}

By substituting (17) into (9), we have a functional equation
\begin{align*}
f_{11}\overline{f_{11}} & =\left(1+t\right)\left(1+t^{-1}\right),
\end{align*}
which implies $f_{11}=\pm\left(1+t\right)t^{m}$ for some integer
$m$. By using these solutions, we have (13) and (14) directly from
(17). $\qedhere$

\noindent \end{proof}
\begin{corollary}

The first homomorphism $\phi$ on $\mathcal{B}$ maps $A\in\mathcal{B}$
into
\begin{align*}
\left(\begin{array}{cc}
g_{11} & g_{12}\\
-\det\left(A\right)\left(1+t^{-1}+t\right)\overline{g_{12}} & \det\left(A\right)\overline{g_{11}}
\end{array}\right).
\end{align*}
\end{corollary}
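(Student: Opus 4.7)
The proof will be essentially a direct calculation: Lemma 3.6 gives us closed-form expressions for $f_{21}$ and $f_{22}$ in terms of $f_{11}$, $f_{12}$, and $\det(A)$, and the two lower entries of $\phi(A)$ are linear combinations of $g_{11}, g_{21}$ (resp. $g_{12}, g_{22}$), so the plan is simply to substitute and simplify.

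Concretely, I would begin by rewriting the lower-left entry of $\phi(A)$ as
\begin{align*}
t^{-1}g_{11}+(1+t)g_{21} & =\frac{t^{-1}f_{11}+(1+t)f_{21}}{t(1+t)}=\frac{f_{11}+t(1+t)f_{21}}{t^{2}(1+t)},
\end{align*}
and then plug in the formula $f_{21}=-\dfrac{f_{11}+\overline{f_{12}}\,t^{3}(1+t+t^{2})\det(A)}{t(1+t)}$ from (13). The $f_{11}$ terms cancel, leaving $-\overline{f_{12}}\,t(1+t+t^{2})\det(A)/(1+t)$. The target expression $-\det(A)(1+t^{-1}+t)\overline{g_{12}}$ becomes, after using $\overline{t(1+t)}=t^{-1}(1+t^{-1})$ and $1+t^{-1}+t=t^{-1}(1+t+t^{2})$, the same quantity. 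An entirely analogous calculation using (14) handles the lower-right entry: the $f_{12}$ terms cancel and one recovers $\det(A)\overline{g_{11}}$ after using $\overline{g_{11}}=\overline{f_{11}}\,t^{2}/(1+t)$.

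The only subtlety worth noting is that Lemma 3.6 was proved under case analysis (the derivation via (15)--(16) required $f_{11},f_{12}\ne 0$, with the degenerate case $f_{12}=0$ handled separately via (17)). Since the statement of Lemma 3.6 holds uniformly, the substitution above is valid for every $A\in\mathcal{B}$, and no further case split is needed here.

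There is no genuine obstacle; the main thing to be careful about is bookkeeping of the factors of $t$ and $1+t$ under the involution $\overline{\,\cdot\,}$. I would present the calculation as two short displays, one for each of the two non-trivial entries, and conclude by noting that the upper row of $\phi(A)$ is $(g_{11},g_{12})$ by definition, which already matches the claimed formula.
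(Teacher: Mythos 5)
Your proposal is correct and follows essentially the same route as the paper: the paper's proof simply rewrites equations (13) and (14) of Lemma 3.5 in terms of the $g_{ij}$ and substitutes into the definition of $\phi$, which is exactly the calculation you carry out (your reference to ``Lemma 3.6'' should read Lemma 3.5). The explicit cancellation of the $f_{11}$ and $f_{12}$ terms and the bookkeeping under the involution all check out.
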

\begin{proof}

\noindent In terms of $\left\{ g_{kl}\right\} _{1\le k,\,l\le2}$,
the equations (13) and (14) in Lemma 3.5 are written as
\begin{align*}
 & g_{21}=-\left(\frac{g_{11}+\overline{g_{12}}\left(1+t+t^{2}\right)\det\left(A\right)}{t\left(1+t\right)}\right),\\
 & g_{22}=\frac{-g_{12}+\overline{g_{11}}t\det\left(A\right)}{t\left(1+t\right)}.
\end{align*}
which simplifies the image of the map $\phi$ as desired. $\qedhere$

\noindent \end{proof}

On the other hand, for $A\in\mathcal{B}$, we can evalulate the entries
at $t\mapsto-1$. It defines the evaluation map:
\begin{align*}
\mathrm{eval}_{-1} & :\mathcal{B}\to\mathrm{GL}\left(3,\,\mathbb{Z}\right).
\end{align*}

For a matrix $B\in\mathrm{eval}_{-1}\left(\mathcal{B}\right)$, we
write the entries in terms of the four corner entries by (5) as
\begin{align}
B & =\left(\begin{array}{ccc}
B_{11} & 1-B_{11}-B_{13} & B_{13}\\
-1+B_{11}+B_{31} & 3-B_{11}-B_{13}-B_{31}-B_{33} & -1+B_{13}+B_{33}\\
B_{31} & 1-B_{31}-B_{33} & B_{33}
\end{array}\right).
\end{align}

\noindent \begin{definition}

Define the \emph{second homomorphism} $\rho:\mathrm{eval}_{-1}\left(\mathcal{B}\right)\to\mathrm{GL}\left(2,\,\mathbb{Z}\right)$
by
\begin{align*}
B & \mapsto\left(\begin{array}{cc}
1-B_{13} & 1-B_{11}\\
1-B_{33} & 1-B_{31}
\end{array}\right).
\end{align*}
\end{definition}
\begin{lemma}

The map $\rho$ is an injective group homomorphism preserving the
determinant.

\noindent \end{lemma}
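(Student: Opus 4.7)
The plan is to realize $\rho$ as the matrix, in a carefully chosen basis, of the restriction of $B$ to a canonical rank-$2$ invariant sublattice of $\mathbb{Z}^3$. The conditions defining $\mathcal{B}$ specialize at $t=-1$ to $(-1,1,-1)B=(-1,1,-1)$ and $B\overrightarrow{1}=\overrightarrow{1}$, so every $B\in\mathrm{eval}_{-1}(\mathcal{B})$ preserves the sublattice
\[
W\::=\:\{(x,y,z)\in\mathbb{Z}^3\::\:-x+y-z=0\}
\]
and fixes the line spanned by $\overrightarrow{1}$. Choosing $w_1:=(1,1,0)^{T}$ and $w_2:=(0,1,1)^{T}$ as a $\mathbb{Z}$-basis of $W$ produces a natural homomorphism $\mathrm{eval}_{-1}(\mathcal{B})\to\mathrm{GL}(W)\cong\mathrm{GL}(2,\mathbb{Z})$ by restriction, and the strategy is to show this restriction map coincides with $\rho$.

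First I would compute $Bw_1$ and $Bw_2$ by expanding the parametrization (18). A direct substitution produces the formulas
\[
Bw_1=(1-B_{13})w_1+(1-B_{33})w_2,\qquad Bw_2=(1-B_{11})w_1+(1-B_{31})w_2,
\]
so that the matrix of $B|_W$ in the basis $(w_1,w_2)$ is exactly $\rho(B)$. This identification is the heart of the argument: it immediately gives the homomorphism property, without any multiplicative check on $B$ and $B'$, because restriction to an invariant sublattice is tautologically a homomorphism.

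For injectivity and determinant preservation, I would verify that the triple $\{w_1,w_2,\overrightarrow{1}\}$ is a $\mathbb{Z}$-basis of $\mathbb{Z}^3$, which is immediate from the elementary computation that the $3\times 3$ change-of-basis matrix has determinant $1$. In this basis $B$ acts block-diagonally with blocks $\rho(B)$ and $(1)$, so $\det(B)=\det\rho(B)$; and if $\rho(B)=I$ then $B$ is the identity on both $W$ and $\langle\overrightarrow{1}\rangle$, hence $B=I$ on all of $\mathbb{Z}^3$.

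The only real work is the bookkeeping when computing $Bw_1$ and $Bw_2$: one must plug (18) into the two matrix-vector products and watch the nine entries collapse into the four corner entries in the stated way. This is routine arithmetic rather than a conceptual obstacle; the substantive point, and the thing worth emphasizing, is that $\rho$ is not an ad hoc definition but records the intrinsic action of $B$ on the invariant lattice $W$.
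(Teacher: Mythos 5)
Your proof is correct, and it takes a genuinely different and more conceptual route than the paper. The paper verifies the homomorphism property by direct matrix multiplication $\rho(B_1)\rho(B_2)=\rho(B_1B_2)$, computes $\det B$ explicitly from the parametrization (18) to match it against $\det\rho(B)$, and proves injectivity by inverting the formula for the four corner entries. You instead observe that at $t=-1$ the row condition $vA=v$ becomes $(-1,1,-1)B=(-1,1,-1)$, so every $B\in\mathrm{eval}_{-1}(\mathcal{B})$ preserves the rank-two sublattice $W=\ker(-1,1,-1)$ while fixing $\overrightarrow{1}$; you then check that the matrix of $B|_W$ in the basis $w_1=(1,1,0)^T$, $w_2=(0,1,1)^T$ is exactly $\rho(B)$. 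From there the homomorphism property is automatic (restriction to an invariant submodule), and since $\{w_1,w_2,\overrightarrow{1}\}$ is a unimodular basis of $\mathbb{Z}^3$ in which $B$ is block-diagonal with blocks $\rho(B)$ and $(1)$, both the determinant identity $\det B=\det\rho(B)$ and injectivity follow at once. I checked the bookkeeping: $Bw_1=(1-B_{13})w_1+(1-B_{33})w_2$ and $Bw_2=(1-B_{11})w_1+(1-B_{31})w_2$ do come out of (18), and $\det\bigl(w_1\,|\,w_2\,|\,\overrightarrow{1}\bigr)=1$. The trade-off is that your argument requires spotting the invariant lattice, but it repays that by explaining structurally \emph{why} $\rho$ is a determinant-preserving injection rather than merely verifying it; the paper's computational proof is shorter on the page but opaque about where the map comes from. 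Your description of $\rho$ as the intrinsic action on $W$ would also transfer cleanly to a common abstract explanation for $\phi$ (which plays the analogous role before evaluation), whereas the paper treats the two maps as separate ad hoc formulas.
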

\begin{proof}

\noindent It is a homomorphism by computing directly for any pair
$B_{1},\,B_{2}\in\mathrm{eval}_{-1}\left(\mathcal{B}\right)$,
\begin{align*}
\rho\left(B_{1}\right)\rho\left(B_{2}\right) & =\rho\left(B_{1}B_{2}\right).
\end{align*}

On the determinant, from (18) we have
\begin{align*}
\det B & =B_{11}-B_{13}-B_{31}+B_{33}+B_{13}B_{31}-B_{11}B_{33},
\end{align*}
which is equal to $\det\rho\left(B\right)=\det\left(\begin{array}{cc}
1-B_{13} & 1-B_{11}\\
1-B_{33} & 1-B_{31}
\end{array}\right)$.

We show the injectivity. For a matrix $B\in\mathrm{eval}_{-1}\left(\mathcal{B}\right)$,
suppose
\begin{align*}
\rho\left(B\right) & =\left(\begin{array}{cc}
1-B_{13} & 1-B_{11}\\
1-B_{33} & 1-B_{31}
\end{array}\right)=\left(\begin{array}{cc}
1 & 0\\
0 & 1
\end{array}\right).
\end{align*}

Then, we have $\left(\begin{array}{cc}
B_{11} & B_{13}\\
B_{31} & B_{33}
\end{array}\right)=\left(\begin{array}{cc}
1 & 0\\
0 & 1
\end{array}\right)$ from the equations at the entries, and conclude that $B$ is the
identity matrix. $\qedhere$

\noindent \end{proof}

We compute the images of $\beta\left(\sigma_{1}\right)=\left(\begin{array}{ccc}
1-t & t & 0\\
1 & 0 & 0\\
0 & 0 & 1
\end{array}\right)$ and $\beta\left(\sigma_{2}\right)=\left(\begin{array}{ccc}
1 & 0 & 0\\
0 & 1-t & t\\
0 & 1 & 0
\end{array}\right)$ under the two homomorphisms $\phi$ and $\rho$. They are given by
\begin{align}
 & \phi\left(\beta\left(\sigma_{1}\right)\right)=\left(\begin{array}{cc}
-\frac{t^{2}}{1+t} & \frac{t}{1+t}\\
\frac{1+t+t^{2}}{1+t} & \frac{1}{1+t}
\end{array}\right),\;\phi\left(\beta\left(\sigma_{2}\right)\right)=\left(\begin{array}{cc}
1 & 0\\
0 & -t
\end{array}\right),\\
 & \rho\left(\beta\left(\sigma_{1}\right)|_{t=-1}\right)=\left(\begin{array}{cc}
1 & -1\\
0 & 1
\end{array}\right),\;\rho\left(\beta\left(\sigma_{2}\right)|_{t=-1}\right)=\left(\begin{array}{cc}
1 & 0\\
1 & 1
\end{array}\right).
\end{align}

It is natural to use the \emph{projective} linear groups in further
calculations. Put $\pi$ as the natural projection
\begin{align*}
\pi & :\mathrm{GL}\left(2,\,\mathbb{Q}\left(t\right)\right)\to\mathrm{PGL}\left(2,\,\mathbb{Q}\left(t\right)\right),
\end{align*}
where $\ker\pi$ includes the scalar matrices $uI$, for units $u$
in $\mathbb{Q}\left(t\right)$. Denote by $\overline{\phi}$ (resp.
$\overline{\rho}$) the composition $\pi\circ\phi$ (resp. $\pi\circ\rho\circ\mathrm{eval}_{-1}$). 

From (20), we see the restriction $\overline{\rho}$ on $\mathrm{Bu}$
is surjective onto $\mathrm{PSL}\left(2,\,\mathbb{Z}\right)$, since
the images of the two integral matrices generate the whole modular
group.

\noindent \begin{lemma}

The kernel of $\overline{\rho}|_{\mathrm{Bu}}:\mathrm{Bu}\to\mathrm{PSL}\left(2,\mathbb{Z}\right)$
is the center $Z\left(\mathrm{Bu}\right)$, generated by $\beta\left(\Delta\right)$.
In other words, $\overline{\rho}$ induces the short exact sequence:
\begin{align*}
1 & \longrightarrow\mathbb{Z}\longrightarrow\mathrm{Bu}\longrightarrow\mathrm{PSL}\left(2,\mathbb{Z}\right)\longrightarrow1.
\end{align*}

\noindent \end{lemma}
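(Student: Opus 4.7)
The plan is to identify $\overline{\rho}|_{\mathrm{Bu}}$ with the canonical projection onto the modular group. Consider the composition $\psi := \overline{\rho}\circ\beta:B_{3}\to\mathrm{PSL}(2,\mathbb{Z})$. From (20), $\psi(\sigma_{1})=s_{1}$ and $\psi(\sigma_{2})=s_{2}$, and since these generate $\mathrm{PSL}(2,\mathbb{Z})$, $\psi$ is surjective. Combined with the surjectivity of $\beta:B_{3}\twoheadrightarrow\mathrm{Bu}$, this gives the surjectivity of $\overline{\rho}|_{\mathrm{Bu}}$ and reduces the kernel computation to finding $\ker\psi$ and then applying $\beta$.

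For $\ker\psi$, I would first observe that $\Delta\in\ker\psi$. Since $\Delta$ is central in $B_{3}$, the image $\psi(\Delta)$ commutes with every element of the image; as $\mathrm{PSL}(2,\mathbb{Z})\cong\mathbb{Z}/2\ast\mathbb{Z}/3$ has trivial center, we deduce $\psi(\Delta)=1$. Hence $\psi$ factors through $B_{3}/\langle\Delta\rangle=B_{3}/Z(B_{3})$, producing a surjection $\widetilde{\psi}:B_{3}/Z(B_{3})\twoheadrightarrow\mathrm{PSL}(2,\mathbb{Z})$. Using the classical isomorphism $B_{3}/Z(B_{3})\cong\mathrm{PSL}(2,\mathbb{Z})$ quoted in the introduction, $\widetilde{\psi}$ becomes a self-surjection of $\mathrm{PSL}(2,\mathbb{Z})$. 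Since $\mathrm{PSL}(2,\mathbb{Z})$ is finitely generated and residually finite, it is Hopfian, so $\widetilde{\psi}$ is an isomorphism and $\ker\psi=\langle\Delta\rangle$. Consequently, $\ker(\overline{\rho}|_{\mathrm{Bu}})=\beta(\langle\Delta\rangle)=\langle\beta(\Delta)\rangle$.

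It then remains to identify $\langle\beta(\Delta)\rangle$ with $Z(\mathrm{Bu})$ and to see it is infinite cyclic. The inclusion $\langle\beta(\Delta)\rangle\subseteq Z(\mathrm{Bu})$ is immediate from the centrality of $\Delta$ in $B_{3}$. Conversely, every element of $Z(\mathrm{Bu})$ projects under $\overline{\rho}|_{\mathrm{Bu}}$ to an element commuting with all of $\mathrm{PSL}(2,\mathbb{Z})$, which is therefore trivial, so $Z(\mathrm{Bu})\subseteq\ker(\overline{\rho}|_{\mathrm{Bu}})=\langle\beta(\Delta)\rangle$. Finally, $\beta(\Delta)$ has infinite order because $\det\beta(\sigma_{i})=-t$ gives $\det\beta(\Delta)=((-t)^{3})^{2}=t^{6}$, so no nonzero power of $\beta(\Delta)$ equals the identity; this yields $\langle\beta(\Delta)\rangle\cong\mathbb{Z}$ and the desired short exact sequence.

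I do not anticipate any serious obstacle: the argument is essentially a packaging of the classical identification $B_{3}/Z(B_{3})\cong\mathrm{PSL}(2,\mathbb{Z})$ together with the triviality of $Z(\mathrm{PSL}(2,\mathbb{Z}))$. The only step that demands a moment of care is upgrading the factored surjection $\mathrm{PSL}(2,\mathbb{Z})\twoheadrightarrow\mathrm{PSL}(2,\mathbb{Z})$ to an isomorphism, which can be done via Hopfianity as above or, alternatively, by comparing presentations (one checks $(s_{1}s_{2})^{3}=-I$ in $\mathrm{SL}(2,\mathbb{Z})$, so that $\langle s_{1},s_{2}\mid s_{1}s_{2}s_{1}=s_{2}s_{1}s_{2},\,(s_{1}s_{2}s_{1})^{2}=1\rangle$ is indeed a presentation of $\mathrm{PSL}(2,\mathbb{Z})$ and coincides with the presentation of $B_{3}/\langle\Delta\rangle$).
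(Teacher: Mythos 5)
Your proof is correct, and it reaches the conclusion by a mechanism different from the paper's. Both arguments ultimately rest on the classical short exact sequence $1\to\mathbb{Z}\to B_{3}\to\mathrm{PSL}\left(2,\,\mathbb{Z}\right)\to1$ with $Z\left(B_{3}\right)=\left\langle \Delta\right\rangle $, but they diverge at the two key steps. First, the paper verifies $\beta\left(\Delta\right)\in\ker\overline{\rho}$ by explicitly evaluating $\beta\left(\Delta\right)$ at $t=-1$ and computing its image $\left[\begin{smallmatrix}-1 & 0\\ 0 & -1\end{smallmatrix}\right]$, whereas you deduce it abstractly from the centrality of $\Delta$ together with the triviality of $Z\left(\mathrm{PSL}\left(2,\,\mathbb{Z}\right)\right)$; your version avoids a computation but needs the surjectivity of $\overline{\rho}\circ\beta$ first (which you correctly establish from (20)). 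Second, the paper upgrades the surjection to the full identification of the kernel via the four lemma applied to a commutative ladder of short exact sequences --- which tacitly requires knowing that $\overline{\rho}\circ\beta$ agrees with the canonical projection $B_{3}\to B_{3}/Z\left(B_{3}\right)\cong\mathrm{PSL}\left(2,\,\mathbb{Z}\right)$ on generators --- while you instead invoke the Hopfian property of the finitely generated, residually finite group $\mathrm{PSL}\left(2,\,\mathbb{Z}\right)$ to conclude that the induced self-surjection is an isomorphism. The Hopfian route is slightly more robust (it does not require matching the two maps on generators), at the cost of importing Mal'cev's theorem; the paper's route is more elementary once the diagram is set up. You also explicitly supply two points the paper leaves implicit: the identification $\ker\left(\overline{\rho}|_{\mathrm{Bu}}\right)=Z\left(\mathrm{Bu}\right)$ asserted in the statement, and the infinite order of $\beta\left(\Delta\right)$ via $\det\beta\left(\Delta\right)=t^{6}$, which is needed to justify the $\mathbb{Z}$ in the short exact sequence. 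Both of these additions are correct and welcome.
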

\begin{proof}

\noindent Recall (4), where the Burau image of $\Delta=\left(\sigma_{1}\sigma_{2}\sigma_{1}\right)^{2}$
is given by
\begin{align*}
\beta\left(\Delta\right) & =\left(\begin{array}{ccc}
1-t+t^{3} & t-t^{2} & t^{2}-t^{3}\\
1-t & t-t^{2}+t^{3} & t^{2}-t^{3}\\
1-t & t-t^{2} & t^{2}
\end{array}\right),
\end{align*}

\noindent where by taking the evaluation at $t=-1$, we have
\begin{align*}
\mathrm{eval}_{-1}\left(\beta\left(\Delta\right)\right) & =\left(\begin{array}{ccc}
1 & -2 & 2\\
2 & -3 & 2\\
2 & -2 & 1
\end{array}\right).
\end{align*}

Therefore,

\[
\overline{\rho}\left(\beta\left(\Delta\right)\right)=\left[\begin{array}{cc}
-1 & 0\\
0 & -1
\end{array}\right]=\left[\begin{array}{cc}
1 & 0\\
0 & 1
\end{array}\right],
\]

\noindent which implies $\beta\left(\Delta\right)\in\ker\overline{\rho}$.
From the usual short exact sequence on the braid group $B_{3}$ (for
example, see \citep[p. 22]{MR2435235})
\begin{align*}
1 & \longrightarrow\mathbb{Z}\longrightarrow B_{3}\longrightarrow\mathrm{PSL}\left(2,\mathbb{Z}\right)\longrightarrow1,
\end{align*}
we conclude the proof by the four lemma. $\qedhere$

\noindent \end{proof}

How about the image of $\overline{\rho}$ of the whole group $\mathcal{B}$?
The answer is the same as that of $\mathrm{Bu}$.

\noindent \begin{lemma}

For a matrix $A\in\mathcal{B}$, we have $\det\left(A\right)=\left(-t\right)^{k}$
for some integer $k$. In particular, we have $\overline{\rho}\left(\mathcal{B}\right)=\mathrm{PSL}\left(2,\,\mathbb{Z}\right)$.

\noindent \end{lemma}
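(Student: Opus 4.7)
The plan is to reduce the lemma to the single scalar identity
\[
\det(A)|_{t=-1}=1 \quad \text{for every } A\in\mathcal{B}.
\]
Granting this, both assertions follow cleanly. First, $A\in\mathrm{GL}(3,\mathbb{Z}[t,t^{-1}])$ already forces $\det(A)=\pm t^{k}$ for some integer $k$; combined with $\det(A)|_{t=-1}=1$, the sign is pinned to $(-1)^{k}$, i.e.\ $\det(A)=(-t)^{k}$. Second, Lemma~3.9 yields $\det\rho(\mathrm{eval}_{-1}(A))=\det(A)|_{t=-1}=1$, so $\overline{\rho}(\mathcal{B})\subset\mathrm{PSL}(2,\mathbb{Z})$; together with the surjectivity of $\overline{\rho}|_{\mathrm{Bu}}$ onto $\mathrm{PSL}(2,\mathbb{Z})$ noted after equation~(20), this yields equality.

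The main input to the scalar identity is Lemma~3.5, equation~(13). Clearing denominators gives
\[
t(1+t)f_{21}=-f_{11}-\overline{f_{12}}\,t^{3}(1+t+t^{2})\det(A),
\]
and reducing the right side modulo $(1+t)$ produces
\[
f_{11}(-1)=f_{12}(-1)\,\det(A)|_{t=-1}.
\]
A direct substitution in (7) shows $f_{11}(-1)=f_{12}(-1)=A_{11}(-1)-1$, so whenever $A_{11}(-1)\ne 1$ I may divide and conclude $\det(A)|_{t=-1}=1$ at once.

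The substantive case is $A_{11}(-1)=1$, where $f_{11}$ and $f_{12}$ both vanish at $t=-1$ and the congruence above is tautological. I then pass to second order by differentiating equations (13) and (14) with respect to $t$ at $t=-1$. A routine computation, using $\overline{f_{ij}}'(-1)=-f_{ij}'(-1)$ together with $f_{11}(-1)=f_{12}(-1)=0$, yields two linear relations expressing $f_{21}(-1)$ and $f_{22}(-1)$ in terms of $f_{11}'(-1)$, $f_{12}'(-1)$, and $\epsilon:=\det(A)|_{t=-1}$. Since $f_{21}(-1)=f_{22}(-1)=A_{21}(-1)-1$, subtracting the two relations gives $(f_{11}'(-1)-f_{12}'(-1))(1-\epsilon)=0$; supposing $\epsilon=-1$ for contradiction forces $f_{11}'(-1)=f_{12}'(-1)$, whence $A_{12}(-1)=-1$ and $A_{21}(-1)=1$. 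The linear constraints $A\overrightarrow{1}=\overrightarrow{1}$ and $vA=v$ evaluated at $t=-1$ then pin down $A|_{t=-1}$ up to the single parameter $A_{22}(-1)$, with its second and third rows coinciding. Hence $\det A|_{t=-1}=0$, contradicting that $\det(A)$ is a unit in $\mathbb{Z}[t,t^{-1}]$.

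The only real obstacle is precisely this singular subcase $A_{11}(-1)=1$: the first-order mod-$(1+t)$ argument carries no information, and one must extract second-order Taylor data out of (13) and (14). The key point is that this second order of information, combined with the row- and column-constraints defining $\mathcal{B}$, is just enough to collapse $A|_{t=-1}$ onto a rank-two matrix, which is incompatible with $\det(A)=\pm t^{k}$.
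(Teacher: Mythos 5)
Your proof is correct, and in the essential (singular) subcase it takes a genuinely different route from the paper. Both arguments agree on the easy part: reducing to $\det(A)|_{t=-1}=1$, and extracting the congruence $f_{11}(-1)=f_{12}(-1)\det(A)|_{t=-1}$ by clearing denominators in Lemma 3.5 (you use (13); the paper uses (14) -- equivalent since $\det(A)|_{t=-1}=\pm1$), which settles everything unless $f_{11}(-1)=f_{12}(-1)=0$. In that singular case the paper introduces $g_{1j}=f_{1j}/(t(1+t))$, evaluates the unitarity identity (23) at $t=-1$ to get $g_{11}(-1)^2-g_{12}(-1)^2=1$, and combines this with the integrality of $A_{22}$ written in terms of $g_{11},g_{12}$ to exclude $\det(A)|_{t=-1}=-1$. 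You instead differentiate (13) and (14) at $t=-1$, use $f_{21}(-1)=f_{22}(-1)$ to obtain $\bigl(f_{11}'(-1)-f_{12}'(-1)\bigr)\bigl(1-\det(A)|_{t=-1}\bigr)=0$, and in the hypothetical case $\det(A)|_{t=-1}=-1$ compute the corner entries $B_{11}=B_{13}=B_{31}=1$ of $B=A|_{t=-1}$; plugging these into the parametrization (18) indeed makes the second and third rows both equal to $(1,\,-B_{33},\,B_{33})$, so $\det B=0$, contradicting invertibility. I verified this row coincidence, and your derivative bookkeeping (the terms carrying $\overline{f_{1j}}(-1)$ or $\bigl(\det A\bigr)'$ all vanish) is sound. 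Your version is arguably more elementary -- it needs only (13), (14) and the linear constraints defining $\mathcal{B}$, not the quadratic equation (23) nor the explicit formula for $A_{22}$ in terms of $g_{11},g_{12}$ -- and it yields the explicit shape of the would-be counterexample at $t=-1$; the paper's version reuses the $g_{ij}$ machinery and equation (23), which reappear later (e.g.\ in Lemma 3.10's role for Corollary 4.7 and Lemma 4.5). One trivial slip: the determinant-preservation of $\rho$ that you cite is Lemma 3.8, not Lemma 3.9.
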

\begin{proof}

\noindent At the end of Section 2, we mentioned the determinant must
be $\pm t^{k}$ for some integer $k$. Thus, it suffices to show that
$\det\left(A\right)|_{t=-1}=1$. At first, from (7) we have
\begin{align}
f_{11}|_{t=-1} & =A_{11}|_{t=-1}-1=f_{12}|_{t=-1}.
\end{align}

Recall the equation (14) in Lemma 3.5 as

\[
f_{22}=\frac{-f_{12}+\overline{f_{11}}t^{4}\det\left(A\right)}{t\left(1+t\right)},
\]

\noindent where since $f_{22}$ is a Laurent polynomial, the numerator
of the right-hand side has a factor $\left(1+t\right)$. In other
word, we have
\begin{align}
\left(f_{11}|_{t=-1}\right)\left(\det\left(A\right)|_{t=-1}\right) & =f_{12}|_{t=-1}.
\end{align}

Comparing (21) with (22), when $f_{11}|_{t=-1}\ne0$, the fact that
$\det\left(A\right)|_{t=-1}=1$ follows, which is required. Suppose
now $f_{11}|_{t=-1}=0=f_{12}|_{t=-1}.$ Then, $g_{11}=\frac{f_{11}}{t\left(1+t\right)}$
and $g_{12}=\frac{f_{12}}{t\left(1+t\right)}$ are also Laurent polynomials
with integer coefficients, satisfying a functional equation:
\begin{align}
g_{11}\overline{g_{11}}+\left(1+t^{-1}+t\right)g_{12}\overline{g_{12}} & =1,
\end{align}

\noindent which is direct from (9).

On the other hand, represent the matrix entry $A_{22}$ in terms of
$g_{11},\:g_{12}$ as
\begin{align*}
A_{22} & =\frac{t\left(1+t\right)+g_{11}+\left(\det\left(A\right)\left(t\overline{g_{11}}+\overline{g_{12}}\right)-g_{12}\right)\left(1+t+t^{2}\right)}{\left(1+t\right)\left(1+t+t^{2}\right)}.
\end{align*}

Since $A_{22}$ is a Laurent polynomial, the numerator satisfies
\begin{align*}
\left(-1+\det\left(A\right)|_{t=-1}\right)\left(-g_{11}|_{t=-1}+g_{12}|_{t=-1}\right) & =0,
\end{align*}
which implies $\det\left(A\right)|_{t=-1}=1$ or $g_{11}|_{t=-1}=g_{12}|_{t=-1}$.
However, the latter is always impossible from the evaluation of (23)
at $t=-1$. $\qedhere$

\noindent \end{proof}

The following lemma provides a characterization of the image of $\phi$.

\noindent \begin{lemma}

Suppose a matrix $M=\left(\begin{array}{cc}
g_{11} & g_{12}\\
-\det\left(M\right)\left(1+t^{-1}+t\right)\overline{g_{12}} & \det\left(M\right)\overline{g_{11}}
\end{array}\right)$ which is included in $\mathrm{GL}\left(2,\,\mathbb{Z}\left[t,t^{-1}\right]\right)$
satisfies that $\det\left(M\right)=\left(-t\right)^{k}$ for some
integer $k$. Then, the image group $\phi\left(\mathcal{B}\right)$
includes $M$ if and only if $g_{11}|_{t=\zeta_{3}}=1$, where $\zeta_{3}$
is a third root of unity satisfying the equation $\zeta_{3}^{2}+\zeta_{3}+1=0.$

\noindent \end{lemma}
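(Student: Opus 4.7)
The plan is to prove both directions. The necessary direction is immediate: if $M = \phi(A)$ with $A \in \mathcal{B}$, then by definition $g_{11} = (A_{11}(1+t+t^2) - 1)/(t(1+t))$; evaluating at $t = \zeta_3$ kills the factor $1+t+t^2$, giving $g_{11}|_{\zeta_3} = -1/(\zeta_3(1+\zeta_3)) = -1/(\zeta_3+\zeta_3^2) = 1$ since $1+\zeta_3+\zeta_3^2 = 0$.

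For the converse, given such $M$, I would explicitly construct $A \in \mathcal{B}$ with $\phi(A) = M$. Set $f_{1j} := t(1+t)g_{1j}$ for $j = 1, 2$, and define $f_{21}, f_{22}$ via the formulas of Lemma 3.5 with $\det(A) := \det(M)$. A short simplification recasts them as
\[
f_{21} = -g_{11} - (1+t+t^2)\,\overline{g_{12}}\,\det(M), \qquad f_{22} = -g_{12} + t\,\overline{g_{11}}\,\det(M),
\]
which are manifestly Laurent polynomials. Define the upper-left $2 \times 2$ block of $A$ via (11) and extend to a full $3 \times 3$ matrix via (5). The critical divisibility by $(1+t+t^2)$ for $A_{11}$ amounts to $f_{11}|_{\zeta_3} = -1$, the hypothesis restated; for $A_{21}$ it is automatic, since $(1+t+t^2)$ annihilates the second term of $f_{21}$ at $\zeta_3$, leaving $f_{21}|_{\zeta_3} = -g_{11}|_{\zeta_3} = -1$. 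Divisibility by $(1+t)$ for $A_{12}, A_{22}$ is a short computation at $t = -1$. The resulting $A$ lies in $\mathrm{GL}(3, \mathbb{Z}[t, t^{-1}])$ with $\det(A) = \det(M)$ by Lemma 3.2(a), and satisfies $vA = v$, $A\vec{1} = \vec{1}$ tautologically by (5).

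It then remains to verify the unitarity $\overline{A}J_3 A^T = J_3$. Passing to the diagonalized form $\overline{A\xi}D(A\xi)^T = J_3$ gives six independent scalar equations in $f_{ij} = (A\xi)_{ij}$; the constraints $f_{k3} = -1$ and $f_{3j} = -t^{-2}(f_{1j} + tf_{2j})$ from $A\vec{1} = \vec{1}$ and $vA = v$ hold automatically. Equations $(1,1)$ and $(1,2)$ are (9) for $k=1$ and (10), both built in. I would verify that (9) for $k=2$ follows from (9) for $k=1$: expanding $f_{21}\overline{f_{21}} + (1+t+t^{-1})f_{22}\overline{f_{22}}$ via the simplified formulas, the $\det(M)$-linear cross-terms cancel thanks to $(1+t^{-1}+t^{-2}) - (1+t+t^{-1})t^{-1} = 0$ and its conjugate, while the remaining terms collapse via the cyclotomic identity $(1+t+t^2)(1+t^{-1}+t^{-2}) = (1+t+t^{-1})^2$ together with $\det(M)\overline{\det(M)} = 1$, reproducing $(1+t)^2/t$. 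The three equations involving $f_{31}, f_{32}$ reduce to linear combinations of (9) and (10) upon substituting $f_{3j} = -t^{-2}(f_{1j} + tf_{2j})$. Once $A \in \mathcal{B}$ is confirmed, $\phi(A) = M$ follows from Corollary 3.6 together with $\det(A) = \det(M)$. The main obstacle is the algebraic verification of these ``missing'' unitarity equations; the cyclotomic identity $(1+t+t^2)(1+t^{-1}+t^{-2}) = (1+t+t^{-1})^2$ is the essential simplifying mechanism.
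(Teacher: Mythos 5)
Your argument is correct and follows the same route as the paper: evaluate at $\zeta_3$ for the necessary direction, and construct $A$ from the inverse formulas for the converse. You go further than the paper by explicitly verifying that the candidate $A$ satisfies the unitarity $\overline{A}J_3 A^T = J_3$ --- a step the paper's proof leaves tacit --- and your mechanism is right: equation (9) for $k=1$ is forced by $\det(M) = (-t)^k$ and the quaternionic shape of $M$ (the actual $2\times 2$ determinant equals $\det(M)\bigl(g_{11}\overline{g_{11}} + \Phi g_{12}\overline{g_{12}}\bigr)$, which must equal $\det(M)$); then in the expansion for (9) at $k=2$ and for (10), the $\det(M)$-linear cross-terms cancel because $\Phi t = 1+t+t^2$ and $\Phi t^{-1} = 1 + t^{-1} + t^{-2}$, and the remaining terms collapse via $(1+t+t^2)(1+t^{-1}+t^{-2}) = \Phi^2$ and $\det(M)\overline{\det(M)}=1$. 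Your use of $f_{21}, f_{22}$ (manifestly Laurent) rather than $g_{21}, g_{22}$ (which pick up a $(1+t)^{-1}$) is also cleaner than the paper's appeal to (12). One small imprecision: you cite Lemma 3.2(a) to obtain $\det(A) = \det(M)$, but that lemma is stated for $A\in\mathcal{B}$, which at that stage is precisely what you are trying to establish; what you actually need --- and can verify by a one-line computation --- is that the determinant identity (8) holds as a purely algebraic consequence of the shape (5), independently of the unitarity.
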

\begin{proof}

\noindent Let us assume the premise of the lemma. Then, $M\in\phi\left(\mathcal{B}\right)$
if and only if there exists a matrix $A\in\mathcal{B}$ such that
$M=\phi\left(A\right)$. From (12), we have
\begin{align*}
 & A_{11}=\frac{1+g_{11}t\left(1+t\right)}{1+t+t^{2}},\;A_{12}=\frac{t\left(1-g_{11}\right)}{\left(1+t+t^{2}\right)}+tg_{12},\\
 & A_{21}=\frac{1+g_{21}t\left(1+t\right)}{1+t+t^{2}},\;A_{22}=\frac{t\left(1-g_{21}\right)}{\left(1+t+t^{2}\right)}+tg_{22},
\end{align*}
which implies that $A_{11}$, $A_{12}$, $A_{21}$ and $A_{22}$ are
Laurent polynomials if and only if
\begin{align*}
g_{11}|_{t=\zeta_{3}}\left(\zeta_{3}+\zeta_{3}^{2}\right) & =-1,\:g_{21}|_{t=\zeta_{3}}\left(\zeta_{3}+\zeta_{3}^{2}\right)=-1,
\end{align*}
if and only if $g_{11}|_{t=\zeta_{3}}=1=g_{21}|_{t=\zeta_{3}}$. From
(13), it is direct that $g_{11}|_{t=\zeta_{3}}=1$ if and only if
$g_{21}|_{t=\zeta_{3}}=1$, which simplifies the condition. $\qedhere$

\noindent \end{proof}
\begin{corollary}

A matrix $A\in\mathcal{B}$ is contained in $\ker\overline{\phi}$
if and only if $A=\beta\left(\Delta\right)^{k}$ for some $k\in\mathbb{Z}$.

\noindent \end{corollary}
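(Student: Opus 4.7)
The plan is to match $\ker\overline{\phi}$ against the cyclic subgroup $\langle \beta(\Delta)\rangle$ by computing $\phi$ explicitly on both sides and then invoking the injectivity of $\phi$ established in Lemma 3.6.

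For the direction $(\Leftarrow)$, I would just compute $\phi(\beta(\Delta))$ directly from formula (4). Plugging the first row of $\beta(\Delta)$ into (7) yields $f_{11}(\beta(\Delta)) = (1-t+t^3)(1+t+t^2)-1 = t^4(1+t)$ and $f_{12}(\beta(\Delta)) = (1-t+t^3)+(t-t^2)(1+t)-1 = 0$, hence $g_{11}(\beta(\Delta)) = t^3$ and $g_{12}(\beta(\Delta)) = 0$. Combined with $\det\beta(\Delta) = (-t)^6 = t^6$, Corollary 3.7 produces $\phi(\beta(\Delta)) = t^3 I$, a scalar matrix. Consequently $\phi(\beta(\Delta)^k) = t^{3k}I$ is scalar for every $k \in \mathbb{Z}$, so $\beta(\Delta)^k \in \ker\overline{\phi}$.

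For the direction $(\Rightarrow)$, suppose $A \in \ker\overline{\phi}$, so $\phi(A) = cI$ for some $c \in \mathbb{Q}(t)^{\times}$. By Corollary 3.7 the upper-right entry of $\phi(A)$ is $g_{12}(A)$, which must vanish; hence $f_{12}(A) = 0$. I would then invoke the $f_{12} = 0$ branch in the proof of Lemma 3.5: equation (9) with $k = 1$ collapses to $f_{11}\overline{f_{11}} = (1+t)(1+t^{-1})$, and its only Laurent polynomial solutions are $f_{11} = \pm(1+t)t^m$ for some integer $m$. Thus $g_{11}(A) = \pm t^{m-1}$.

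With $g_{12}(A) = 0$ in hand, $\phi(A)$ in fact lies in $\mathrm{GL}(2, \mathbb{Z}[t, t^{-1}])$, and by Lemma 3.6 its determinant equals $\det(A) = (-t)^{\ell}$ for some $\ell$ by Lemma 3.10. The hypotheses of Lemma 3.13 therefore apply to $M = \phi(A) \in \phi(\mathcal{B})$, and its conclusion forces $g_{11}(A)|_{t=\zeta_3} = 1$. Since $\zeta_3^{m-1} \in \{1, \zeta_3, \zeta_3^2\}$ is never $-1$, the sign must be positive and $m - 1 \equiv 0 \pmod 3$; writing $m - 1 = 3j$ gives $g_{11}(A) = t^{3j}$, whence $\phi(A) = t^{3j}I = \phi(\beta(\Delta)^j)$, and injectivity of $\phi$ (Lemma 3.6) yields $A = \beta(\Delta)^j$.

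The main obstacle is the refinement from the unitarity-only constraint $g_{11}(A) \in \{\pm t^{m-1}\}$ down to the strictly smaller family $t^{3j}$ that actually arises from powers of $\beta(\Delta)$; this is precisely where the image-characterization supplied by Lemma 3.13 via evaluation at $\zeta_3$ is indispensable, as the unitarity relations alone leave a larger discrete spectrum of spurious candidates for $c$.
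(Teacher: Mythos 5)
Your argument is correct and follows essentially the same route as the paper: compute $\phi\left(\beta\left(\Delta\right)\right)=t^{3}I$ for one direction, and for the other use the diagonal form of $\phi\left(A\right)$ to force $g_{11}=\pm t^{l}$, then the evaluation-at-$\zeta_{3}$ criterion (the paper's Lemma 3.11) to pin down $g_{11}=t^{3k}$, and finally injectivity of $\phi$. The only cosmetic difference is that you extract $g_{11}\overline{g_{11}}=1$ from the $f_{12}=0$ branch of the functional equation (9), whereas the paper gets it by equating $\det\phi\left(A\right)$ with $\det A$; both are immediate.
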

\begin{proof}

\noindent Recall (4), where the Burau image of $\Delta=\left(\sigma_{1}\sigma_{2}\sigma_{1}\right)^{2}$
is given by
\begin{align*}
\beta\left(\Delta\right) & =\left(\begin{array}{ccc}
1-t+t^{3} & t-t^{2} & t^{2}-t^{3}\\
1-t & t-t^{2}+t^{3} & t^{2}-t^{3}\\
1-t & t-t^{2} & t^{2}
\end{array}\right),
\end{align*}
by which we compute
\begin{align*}
\phi\left(\beta\left(\Delta\right)\right) & =\left(\begin{array}{cc}
t^{3} & 0\\
0 & t^{3}
\end{array}\right),
\end{align*}

\noindent which is included in $\ker\overline{\phi}$.

On the other hand, choose a matrix $A\in\mathcal{B}$. Suppose that
$A\in\ker\overline{\phi}$, which is equivalent to the condition that
$\phi\left(A\right)=uI$ for some unit $u=\pm t^{l_{1}}\left(1+t\right)^{l_{2}}$
in $\mathbb{Z}\left[t,\,t^{-1},\,\left(1+t\right)^{-1}\right]^{\times}$,
where $l_{1}$ and $l_{2}$ are integers. By Corollary 3.6, we have
\begin{align}
\phi\left(A\right) & =\left(\begin{array}{cc}
g_{11} & 0\\
0 & \det\left(A\right)\overline{g_{11}}
\end{array}\right).
\end{align}

Since the determinant is preserved under $\phi$ by Lemma 3.4, by
equating $\det\phi\left(A\right)$ with $\det A$ in (24), we have
a functional equation
\begin{align*}
g_{11}\overline{g_{11}} & =1,
\end{align*}
which has only solutions $g_{11}=\pm t^{l}$ for some integer $l$
in $g_{11}\in\mathbb{Z}\left[t,\,t^{-1},\,\left(1+t\right)^{-1}\right]$.
From Lemma 3.11, we need to require that
\begin{align*}
1 & =g_{11}|_{t=\zeta_{3}}=\pm\zeta_{3}^{l},
\end{align*}
which implies $g_{11}=\left(t^{3}\right)^{k}$ for some integer $k$.
$\qedhere$

\noindent \end{proof}
\begin{corollary}

The center of the formal Burau group $\mathcal{B}$ is generated by
$\beta\left(\Delta\right)$. In particular, we have $Z\left(\mathcal{B}\right)=Z\left(\mathrm{Bu}\right)$.

\noindent \end{corollary}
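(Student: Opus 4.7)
My plan is to prove the two inclusions $\langle \beta(\Delta)\rangle \subseteq Z(\mathcal{B})$ and $Z(\mathcal{B}) \subseteq \langle \beta(\Delta)\rangle$ separately, using the injectivity of $\phi$ (Lemma 3.4) and the identification $\ker \overline{\phi} = \langle \beta(\Delta) \rangle$ already established in Corollary 3.13.

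For the first inclusion, I observe that the computation in the proof of Corollary 3.13 yields $\phi(\beta(\Delta)) = t^{3} I$, a scalar matrix, which therefore commutes with every element of $\phi(\mathcal{B})$. Consequently, for any $A \in \mathcal{B}$ we have $\phi(A\beta(\Delta)) = \phi(A)\cdot t^{3} I = t^{3} I \cdot \phi(A) = \phi(\beta(\Delta) A)$, and the injectivity of $\phi$ upgrades this to $A\beta(\Delta) = \beta(\Delta)A$ in $\mathcal{B}$. Hence $\beta(\Delta) \in Z(\mathcal{B})$.

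For the reverse inclusion, I let $A \in Z(\mathcal{B})$. Since $\mathrm{Bu} \subseteq \mathcal{B}$, the image $\phi(A)$ commutes with the two explicit matrices $\phi(\beta(\sigma_{1}))$ and $\phi(\beta(\sigma_{2}))$ recorded in (19). Commuting with $\phi(\beta(\sigma_{2})) = \mathrm{diag}(1,-t)$ forces $\phi(A)$ to be diagonal, because its two diagonal entries are distinct. Commuting further with $\phi(\beta(\sigma_{1}))$, whose off-diagonal entries $t/(1+t)$ and $(1+t+t^{2})/(1+t)$ are both nonzero, then forces the two diagonal entries of $\phi(A)$ to coincide, so $\phi(A)$ is a scalar matrix. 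Thus $A \in \ker \overline{\phi}$, and Corollary 3.13 concludes that $A$ is a power of $\beta(\Delta)$.

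The ``in particular'' statement follows immediately: the first inclusion together with $\beta(\Delta) \in \mathrm{Bu}$ gives $\langle \beta(\Delta)\rangle \subseteq Z(\mathrm{Bu})$, while the centralizer argument of the previous paragraph uses only commutation with $\beta(\sigma_{1})$ and $\beta(\sigma_{2})$, so it equally yields $Z(\mathrm{Bu}) \subseteq \langle \beta(\Delta)\rangle$. I do not expect any serious obstacle: the essential structural input, Corollary 3.13, is already in hand, and the remaining step is an elementary centralizer calculation in $\mathrm{GL}(2,\,\mathbb{Z}[t,t^{-1},(1+t)^{-1}])$.
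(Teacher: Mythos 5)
Your proof is correct and follows essentially the same route as the paper: both arguments compute the centralizer inside $\phi\left(\mathcal{B}\right)$ using the explicit generator images in (19) and then fall back on the characterization of $\ker\overline{\phi}$ (which is Corollary 3.12 in the paper's numbering, not 3.13). Your ordering is slightly cleaner --- you use both generators at once to force $\phi\left(A\right)$ to be scalar before citing the kernel result, whereas the paper first extracts $g_{12}=0$ from commutation with $\beta\left(\sigma_{2}\right)$, identifies $A=\beta\left(\Delta^{m}\sigma_{2}^{n}\right)$, and only then uses $\beta\left(\sigma_{1}\right)$ --- and you also supply the easy inclusion $\beta\left(\Delta\right)\in Z\left(\mathcal{B}\right)$, which the paper leaves implicit.
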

\begin{proof}

\noindent Suppose a matrix $A\in\mathcal{B}$ is in the center $Z\left(\mathcal{B}\right)$
and define $\left\{ g_{ij}\right\} _{1\le i,\,j\le2}$ as usual on
$A$. Since $A$ commutes with $\beta\left(\sigma_{2}\right)$, we
have
\begin{align}
\phi\left(A\right)\phi\left(\beta\left(\sigma_{2}\right)\right) & =\phi\left(\beta\left(\sigma_{2}\right)\right)\phi\left(A\right).
\end{align}

We expand (25) from (19) as
\begin{align*}
\left(\begin{array}{cc}
g_{11} & -g_{12}t\\
* & *
\end{array}\right) & =\left(\begin{array}{cc}
g_{11} & g_{12}\\
* & *
\end{array}\right),
\end{align*}
from which we have $g_{12}=0$. As in the proof of Corollary 3.12,
Lemma 3.11 enforces ${g_{11}=\left(t^{3}\right)^{k}}$ for some integer
$k$. From Lemma 3.10, the only possible candidate for $A$ is of
form $\beta\left(\Delta^{m}\sigma_{2}^{n}\right)$ for a pair of integers
$m,\:n$. By using (19) again, we conclude that $n$ must be 0. $\qedhere$

\noindent \end{proof}

By Lemma 3.9 and Corollary 3.12, we have
\begin{align*}
\ker\overline{\rho}|_{\mathrm{Bu}} & =\left\langle \beta\left(\Delta\right)\right\rangle =\ker\overline{\phi},
\end{align*}
from which we can reconstruct the map $\mu$ taking the cosets $\left\langle \beta\left(\Delta\right)\right\rangle A$
in $\mathrm{Bu}$ to $\overline{\phi}\left(A\right)$, from the matrices
in (19) and (20). A remarkable property of $\mu$ is its injectivity,
which ultimately depends on the faithfulness of the Burau representation
$\beta$.

\noindent \begin{lemma}

The map $\mu$ is injective.

\noindent \end{lemma}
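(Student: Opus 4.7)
The plan is to realize $\mu$ as a composition of two maps---one an isomorphism and the other an injection---whose existence is already implicit in the work done in Section~3. By Lemma 3.9, the restriction $\overline{\rho}|_{\mathrm{Bu}}\colon \mathrm{Bu}\to\mathrm{PSL}(2,\mathbb{Z})$ is surjective with kernel $\langle\beta(\Delta)\rangle$, so it descends to an isomorphism $\overline{\rho}^{\ast}\colon \mathrm{Bu}/\langle\beta(\Delta)\rangle\xrightarrow{\sim}\mathrm{PSL}(2,\mathbb{Z})$. Meanwhile Corollary 3.12 shows that $\ker\overline{\phi}|_{\mathcal{B}}=\langle\beta(\Delta)\rangle$, and this persists on the subgroup $\mathrm{Bu}\subseteq\mathcal{B}$ since $\beta(\Delta)\in\mathrm{Bu}$. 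Hence $\overline{\phi}|_{\mathrm{Bu}}$ descends to an injective homomorphism $\overline{\phi}^{\ast}\colon \mathrm{Bu}/\langle\beta(\Delta)\rangle\hookrightarrow\mathrm{PGL}(2,\mathbb{Z}[t,t^{-1},(1+t)^{-1}])$.

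Next I would identify $\mu$ with the composite $\overline{\phi}^{\ast}\circ(\overline{\rho}^{\ast})^{-1}$. Since both sides are group homomorphisms and $\mathrm{PSL}(2,\mathbb{Z})=\langle s_{1},s_{2}\rangle$, it suffices to check agreement on these two generators. Formula (20) reads $\overline{\rho}(\beta(\sigma_{i}))=s_{i}$ for $i=1,2$, so $(\overline{\rho}^{\ast})^{-1}(s_{i})$ is the coset of $\beta(\sigma_{i})$; then formula (19) shows $\overline{\phi}(\beta(\sigma_{i}))$ is precisely the value of $\mu(s_{i})$ fixed by the definition of $\mu$ in Section~1. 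Therefore $\mu=\overline{\phi}^{\ast}\circ(\overline{\rho}^{\ast})^{-1}$, a composition of an isomorphism and an injection, so $\mu$ itself is injective.

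The faithfulness of $\beta$ enters silently through Lemma 3.9: it is what guarantees that the quotient $\mathrm{Bu}/\langle\beta(\Delta)\rangle$ equals $\mathrm{PSL}(2,\mathbb{Z})$ exactly, by identifying $\mathrm{Bu}$ with $B_{3}$ and $\langle\beta(\Delta)\rangle$ with the center $Z(B_{3})$. Beyond this, the argument is entirely formal: once the two kernels are matched up, injectivity of $\mu$ is automatic. I do not anticipate any serious computational obstacle; the only bookkeeping is verifying well-definedness of $\overline{\phi}^{\ast}$ on the quotient and matching the composition with $\mu$ on the two generators, both of which are immediate from the cited results.
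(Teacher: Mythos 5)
Your proof is correct and is essentially the paper's argument: the paper applies the four lemma to the commutative ladder of short exact sequences built from Lemma 3.9 and Corollary 3.12, which is exactly the diagram chase you carry out by hand when you factor $\mu$ as $\overline{\phi}^{\ast}\circ(\overline{\rho}^{\ast})^{-1}$ and match the two maps on the generators via (19) and (20).
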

\begin{proof}

\noindent By Lemma 3.9 and Corollary 3.12, we have a commutative diagram:
\begin{align*}
\xymatrix{1\ar[r] & \mathbb{Z}\ar[r]\ar[d]^{=} & \mathrm{Bu}\ar[d]^{i}\ar[r]^{\overline{\rho}} & \mathrm{PSL}\left(2,\mathbb{Z}\right)\ar[d]^{\mu}\ar[r] & 1\\
1\ar[r] & \mathbb{Z}\ar[r] & \mathcal{B}\ar[r]^{\overline{\phi}} & \overline{\phi}\left(\mathcal{B}\right)\ar[r] & 1
}
\end{align*}
where $i:\mathrm{Bu}\to\mathcal{B}$ is the inclusion map defined
by $i\left(A\right)=A$, and the kernels are generated by $\beta\left(\Delta\right)$.
The four lemma establishes the result desired. $\qedhere$

\noindent \end{proof}

As predicted in Section 2, we prove the following lemma. This justifies
that it is sufficient for us to simply use the formal Burau group
instead of Salter's target group in our analysis.

\noindent \begin{lemma}

The formal Burau group $\mathcal{B}$ is equal to Salter's target
group $\Gamma$.

\noindent \end{lemma}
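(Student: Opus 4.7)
The inclusion $\Gamma\subseteq\mathcal{B}$ is immediate from the definitions, so I would focus on showing that every $A\in\mathcal{B}$ automatically satisfies $A|_{t=1}\in S_{3}$. The plan is to evaluate the unitarity relation at $t=1$ and exploit the fact that the involution $\overline{\,\cdot\,}:t\mapsto t^{-1}$ becomes trivial there, collapsing the problem to one about integer orthogonal matrices.

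Concretely, at $t=1$ one has $\overline{A}|_{t=1}=A|_{t=1}$ and
$$J_{3}|_{t=1}=\begin{pmatrix}1 & -1 & -1\\ -1 & 1 & -1\\ -1 & -1 & 1\end{pmatrix}=2I_{3}-\overrightarrow{1}\,\overrightarrow{1}^{T}.$$
Substituting this into $\overline{A}J_{3}A^{T}=J_{3}$ evaluated at $t=1$, expanding, and using $A\overrightarrow{1}=\overrightarrow{1}$ (which yields $A|_{t=1}\,\overrightarrow{1}\,\overrightarrow{1}^{T}A|_{t=1}^{T}=\overrightarrow{1}\,\overrightarrow{1}^{T}$), the rank-one terms cancel and one is left with
$$A|_{t=1}\,A|_{t=1}^{T}=I_{3}.$$
Thus $A|_{t=1}$ is an integer orthogonal $3\times 3$ matrix.

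A standard argument then finishes the lemma. Each row of an integer orthogonal matrix is a unit vector in $\mathbb{Z}^{3}$, so it contains exactly one nonzero entry, equal to $\pm1$; hence $A|_{t=1}$ is a signed permutation matrix. The hypothesis $A\overrightarrow{1}=\overrightarrow{1}$, evaluated at $t=1$, forces every row sum to equal $1$, which rules out the negative signs and gives an honest permutation matrix, i.e.\ an element of $S_{3}$. The only step that requires any real manipulation is the cancellation that collapses the unitarity at $t=1$ to $A|_{t=1}A|_{t=1}^{T}=I_{3}$; everything else is formal, and I do not expect a serious obstacle.
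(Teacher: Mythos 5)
Your proof is correct, but it takes a genuinely different route from the paper's. The paper works through its $f_{kl}$ machinery: it evaluates the functional equation (9) at $t=1$ to obtain the Diophantine equation $\left(f_{11}|_{t=1}\right)^{2}+3\left(f_{12}|_{t=1}\right)^{2}=4$, enumerates its six integer solutions, and then traces back through the relations (7), (13), (14) to check that the resulting corner entries of $A|_{t=1}$ are exactly those of the six permutation matrices. You instead go straight to the raw unitarity relation: writing $J_{3}|_{t=1}=2I_{3}-\overrightarrow{1}\,\overrightarrow{1}^{T}$ and using $A\overrightarrow{1}=\overrightarrow{1}$ to cancel the rank-one term gives $A|_{t=1}A|_{t=1}^{T}=I_{3}$, after which integrality forces a signed permutation matrix and the row-sum condition kills the signs. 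Both arguments are sound; I verified your cancellation, which is exactly $B\left(2I-\overrightarrow{1}\,\overrightarrow{1}^{T}\right)B^{T}=2BB^{T}-\left(B\overrightarrow{1}\right)\left(B\overrightarrow{1}\right)^{T}$. Your version is shorter, self-contained, and arguably more conceptual (it explains \emph{why} the $t=1$ specialization lands in $S_{3}$: the form degenerates to the standard inner product on the complement of $\overrightarrow{1}$), whereas the paper's version is less illuminating on its own but reuses the $\left\{f_{kl}\right\}$ formalism that the rest of Section 3 is built on, at the cost of an enumeration of cases.
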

\begin{proof}

\noindent We show that for each $A\in\mathcal{B}$, the image of $S_{3}$
under the permutation representation includes $A|_{t=1}$. By evaluating
the functional equation (9) at $t=1$, we have a Diophantine equation
\begin{align}
\left(f_{11}|_{t=1}\right)^{2}+3\left(f_{12}|_{t=1}\right)^{2} & =4.
\end{align}

The only integer solutions $\left(f_{11}|_{t=1},\,f_{12}|_{t=1}\right)$
for (26) are $\left(\pm2,\,0\right)$ and $\left(\pm1,\,\pm1\right)$.
From (7), (13), and (14), by applying $\det A|_{t=1}=\pm1$, it is
direct that there are only 6 possible combinations of pairs of integers
\begin{align*}
\left(A_{11}|_{t=1},\,A_{12}|_{t=1},\,A_{21}|_{t=1},\,A_{22}|_{t=1}\right),
\end{align*}
exactly corresponding to the image matrices of the permutation representation
of $S_{3}$. $\qedhere$

\noindent \end{proof}

The following lemma, contrary to Lemma 3.11, characterizes the condition
for the image $\phi\left(A\right)$ of an element $A\in\mathcal{B}$
to have Laurent polynomial entries.

\noindent \begin{lemma}

For a matrix $A\in\mathcal{B}$, the matrix $\phi\left(A\right)$
is contained in $\mathrm{GL}\left(2,\,\mathbb{Z}\left[t,t^{-1}\right]\right)$
if and only if $\rho\left(A\right)_{12}=0$.

\noindent \end{lemma}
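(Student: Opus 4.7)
The plan is to trace the condition for $\phi(A)$ to have Laurent polynomial entries back through the explicit formulas defining $\phi$, and recognize that the resulting divisibility condition is precisely $\rho(A)_{12}=0$.

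First I will write out $g_{11}$ and $g_{12}$ explicitly using (7) and the substitution $g_{ij}=f_{ij}/(t(1+t))$:
\begin{align*}
g_{11}=\frac{A_{11}(1+t+t^{2})-1}{t(1+t)},\qquad g_{12}=\frac{A_{11}+A_{12}(1+t)-1}{t(1+t)}.
\end{align*}
Since $t$ is a unit in $\mathbb{Z}[t,t^{-1}]$, the only obstruction to $g_{11}$ and $g_{12}$ being Laurent polynomials is divisibility of their numerators by $(1+t)$. Evaluating at $t=-1$, both numerators become $A_{11}|_{t=-1}-1$. Hence $g_{11},g_{12}\in\mathbb{Z}[t,t^{-1}]$ if and only if $A_{11}|_{t=-1}=1$.

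Next I will argue that once $g_{11}$ and $g_{12}$ are Laurent polynomials, the remaining two entries of $\phi(A)$ described in Corollary 3.6 are automatically Laurent polynomials as well. Indeed, by Lemma 3.10 we have $\det(A)=(-t)^{k}$ for some integer $k$, and the entries $-\det(A)(1+t^{-1}+t)\overline{g_{12}}$ and $\det(A)\overline{g_{11}}$ are then manifest products of Laurent polynomials. Invertibility of $\phi(A)$ in $\mathrm{GL}(2,\mathbb{Z}[t,t^{-1}])$ follows from Lemma 3.4, which gives $\det\phi(A)=\det(A)=(-t)^{k}$, a unit in $\mathbb{Z}[t,t^{-1}]$.

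Finally I will relate $A_{11}|_{t=-1}$ to $\rho(A)_{12}$. Unwinding Definition 3.7, with $B=A|_{t=-1}$ we have $\rho(A)_{12}=\rho(B)_{12}=1-B_{11}=1-A_{11}|_{t=-1}$. Therefore $A_{11}|_{t=-1}=1$ is equivalent to $\rho(A)_{12}=0$, completing the proof. There is no real obstacle here: the statement is essentially bookkeeping, converting the denominator $(1+t)$ appearing in $\phi$ into the corresponding vanishing condition at $t=-1$ that $\rho$ records.
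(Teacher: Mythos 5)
Your proposal is correct and follows essentially the same route as the paper: reduce to the question of whether $g_{11}$ and $g_{12}$ are Laurent polynomials (the other entries being handled by Corollary 3.6 together with $\det(A)=(-t)^{k}$), translate that into the vanishing of the numerators from (7) at $t=-1$, i.e.\ $A_{11}|_{t=-1}=1$, and identify this with $\rho(A)_{12}=0$ via Definition 3.7. You are in fact slightly more explicit than the paper on the last identification, which it leaves implicit.
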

\begin{proof}

\noindent For a matrix $A\in\mathcal{B}$, Corollary 3.6 ensures that
$\phi\left(A\right)\in\mathrm{GL}\left(2,\,\mathbb{Z}\left[t,t^{-1}\right]\right)$
if and only if $g_{11}$ and $g_{12}$ are Laurent polynomials. By
the definition, $g_{11}$ and $g_{12}$ are Laurent polynomials if
and only if $f_{11}|_{t=-1}=0=f_{12}|_{t=-1}$. From (7), we see the
last is equivalent to the condition that $A_{11}|_{t=-1}=1$. $\qedhere$

\noindent \end{proof}

We are ready to describe the structure of $\overline{\Gamma}$. Define
a map $\epsilon:\overline{\phi}\left(\mathcal{B}\right)\to\mathrm{PSL}\left(2,\,\mathbb{Z}\right)$
by
\begin{align*}
\left\langle \beta\left(\Delta\right)\right\rangle \phi\left(A\right) & \mapsto\overline{\rho}\left(A\right),
\end{align*}
where $\left\langle \beta\left(\Delta\right)\right\rangle \phi\left(A\right)$
is the coset having a representative $\phi\left(A\right)\in\phi\left(\mathcal{B}\right)$.
This is well-defined since $\phi$ is injective, from Lemma 3.4, and
$\overline{\rho}\left(\beta\left(\Delta\right)\right)=1$, which we
saw in the proof of ${\mathrm{Lemma}\;3.9}$.

\noindent \begin{theorem}

The group $\overline{\Gamma}$ is isomorphic to a semidirect product
$F\rtimes\mathrm{PSL}\left(2,\,\mathbb{Z}\right)$, where $F$ is
a subgroup of $\overline{\phi}\left(\mathcal{B}\right)\cap\mathrm{PGL}\left(2,\,\mathbb{Z}\left[t,\,t^{-1}\right]\right)$.

\noindent \end{theorem}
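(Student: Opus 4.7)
The plan is to assemble the structural results of this section into a short exact sequence that splits via $\mu$. First, by Lemma 3.15 we have $\Gamma=\mathcal{B}$, and by Corollary 3.13, $Z(\mathcal{B})=\langle\beta(\Delta)\rangle$. Corollary 3.12 further identifies this center with $\ker\overline{\phi}$, so $\overline{\phi}$ descends to an isomorphism $\overline{\Gamma}\cong\overline{\phi}(\mathcal{B})$. It therefore suffices to exhibit $\overline{\phi}(\mathcal{B})$ as a semidirect product $F\rtimes\mathrm{PSL}(2,\mathbb{Z})$ with $F\subset\mathrm{PGL}(2,\mathbb{Z}[t,t^{-1}])$.

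Next, I will verify that the map $\epsilon:\overline{\phi}(\mathcal{B})\to\mathrm{PSL}(2,\mathbb{Z})$ defined immediately before the statement is a well-defined surjective group homomorphism. Well-definedness rests on injectivity of $\phi$ (Lemma 3.4) together with $\overline{\rho}(\beta(\Delta))=1$ (recorded in the proof of Lemma 3.9): if $\overline{\phi}(A)=\overline{\phi}(A')$ then $A{A'}^{-1}\in\langle\beta(\Delta)\rangle$, so $\overline{\rho}(A)=\overline{\rho}(A')$. Surjectivity is Lemma 3.10. Setting $F:=\ker\epsilon$, I obtain the short exact sequence
\begin{align*}
1\longrightarrow F\longrightarrow\overline{\phi}(\mathcal{B})\stackrel{\epsilon}{\longrightarrow}\mathrm{PSL}(2,\mathbb{Z})\longrightarrow 1.
\end{align*}

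To split this sequence, I will use $\mu$ itself as a section. The commutative diagram in Lemma 3.14 already shows that $\mu$ takes values in $\overline{\phi}(\mathcal{B})$, so $\mu:\mathrm{PSL}(2,\mathbb{Z})\to\overline{\phi}(\mathcal{B})$ is a well-defined injective homomorphism. Checking $\epsilon\circ\mu=\mathrm{id}$ reduces to evaluating on the generators $s_{1},s_{2}$: comparing (19) and (20), one has $\mu(s_{i})=\overline{\phi}(\beta(\sigma_{i}))$ and $\epsilon(\overline{\phi}(\beta(\sigma_{i})))=\overline{\rho}(\beta(\sigma_{i}))=s_{i}$. Hence the sequence splits and $\overline{\phi}(\mathcal{B})\cong F\rtimes\mathrm{PSL}(2,\mathbb{Z})$, as required.

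It remains to locate $F$ inside $\mathrm{PGL}(2,\mathbb{Z}[t,t^{-1}])$. For any $\overline{\phi}(A)\in F$, the condition $\overline{\rho}(A)=1$ in $\mathrm{PSL}(2,\mathbb{Z})$ forces $\rho(A)=\pm I$; in particular $\rho(A)_{12}=0$, so Lemma 3.16 applies and gives $\phi(A)\in\mathrm{GL}(2,\mathbb{Z}[t,t^{-1}])$, whose projective class then lies in $\mathrm{PGL}(2,\mathbb{Z}[t,t^{-1}])$. The argument is essentially bookkeeping; the only mild subtlety I expect is verifying that $\epsilon\circ\mu$ is literally the identity on $\mathrm{PSL}(2,\mathbb{Z})$, rather than only on a generating set, which is immediate since both sides are group homomorphisms out of a group presented by the braid-like relations on $s_{1},s_{2}$ recalled at the start of the section.
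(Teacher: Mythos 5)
Your proof is correct and follows the paper's own argument essentially step by step: identifying $\overline{\Gamma}\cong\overline{\phi}(\mathcal{B})$ via Lemma 3.15 together with Corollaries 3.12 and 3.13, forming the short exact sequence via $\epsilon$, splitting it with $\mu$, and placing $F=\ker\epsilon$ inside $\mathrm{PGL}(2,\mathbb{Z}[t,t^{-1}])$ by Lemma 3.16. Your extra verification that $\epsilon\circ\mu=\mathrm{id}$ on generators is fine, though the appeal to the presentation of $\mathrm{PSL}(2,\mathbb{Z})$ is unnecessary: two homomorphisms that agree on a generating set agree everywhere, with no need to invoke the relations.
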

\begin{proof}

\noindent At first, note that $\overline{\Gamma}\cong\overline{\phi}\left(\mathcal{B}\right)$
by Corollary 3.13 and Lemma 3.15. By the construction, the map $\epsilon$
induces a short exact sequence:

\noindent 
\begin{align*}
1 & \longrightarrow\ker\epsilon\longrightarrow\overline{\phi}\left(\mathcal{B}\right)\longrightarrow\mathrm{PSL}\left(2,\,\mathbb{Z}\right)\longrightarrow1.
\end{align*}

Moreover, the map $\mu:\mathrm{PSL}\left(2,\,\mathbb{Z}\right)\to\overline{\phi}\left(\mathcal{B}\right)$
makes this sequence split since it is injective by Lemma 3.14. Thus,
we have a semidirect product
\begin{align*}
\overline{\phi}\left(\mathcal{B}\right) & =\ker\epsilon\rtimes\overline{\phi}\left(\mathrm{Bu}\right),
\end{align*}
where we have $\overline{\phi}\left(\mathrm{Bu}\right)\cong\mathrm{PSL}\left(2,\,\mathbb{Z}\right)$.
Take the group $F$ as $\ker\epsilon$, and now it suffices to show
that $\mathrm{PGL}\left(2,\,\mathbb{Z}\left[t,\,t^{-1}\right]\right)$
contains $\ker\epsilon$. Suppose a matrix $\phi\left(A\right)\in\phi\left(\mathcal{B}\right)$
is a representative of $\overline{\phi}\left(A\right)$, which is
included in $\ker\epsilon$. Then, we have $\rho\left(A\right)_{12}=0$
since the only representatives of the identity in $\mathrm{PSL}\left(2,\,\mathbb{Z}\right)$
are exactly of form $\pm I$ in $\mathrm{GL}\left(2,\,\mathbb{Z}\right)$.
By Lemma 3.16, we conclude that $\mathrm{PGL}\left(2,\,\mathbb{Z}\left[t,\,t^{-1}\right]\right)$
includes $\overline{\phi}\left(A\right)$. $\qedhere$

\noindent \end{proof}

From now on, we call the group $F$ in Theorem 3.17 the \emph{deviating
subgroup}.

\section{Quaternionic Group and the Freeness}

In this section, we will undertake some preliminary work to gather
information about the deviating subgroup $F$. This subgroup is crucial
for approaching Salter's question. Ultimately, we will regard this
group as a subgroup of a larger group, the quaternionic group $\mathcal{Q}$
(${\mathrm{Definition}\;4.3}$), and show that $F$ is free (Corollary
4.11) by presenting a structure theorem for the group $\mathcal{Q}$
(Theorem 4.9).

\noindent \begin{lemma}

Question 2.2 is true if and only if the deviating subgroup $F$ is
trivial.

\noindent \end{lemma}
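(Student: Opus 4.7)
The plan is to translate Question~2.2 into an equivalent statement about $\overline{\phi}(\mathcal{B})$ using the identifications established in Section~3, and then to invoke the semidirect product decomposition of Theorem~3.17. No new technical work is needed: the lemma is a short bookkeeping consequence of the groundwork already laid.

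First I would identify $\overline{\Gamma}$ with $\overline{\phi}(\mathcal{B})$. By Lemma~3.15 we have $\Gamma = \mathcal{B}$, so $\overline{\Gamma} = \mathcal{B}/Z(\mathcal{B})$. Corollary~3.12 gives $\ker \overline{\phi} = \langle \beta(\Delta) \rangle$, while Corollary~3.13 gives $Z(\mathcal{B}) = \langle \beta(\Delta) \rangle$; consequently $\overline{\phi}$ descends to an isomorphism $\overline{\Gamma} \cong \overline{\phi}(\mathcal{B})$. Because $\overline{\phi}$ is by definition $\pi \circ \phi$ applied to $\mathcal{B}$ and $\mathrm{Bu} = \beta(B_{3}) \subseteq \mathcal{B}$, this isomorphism sends the image of $\overline{\beta} = q \circ \beta$ inside $\overline{\Gamma}$ precisely onto $\overline{\phi}(\mathrm{Bu})$.

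Under this identification, surjectivity of $\overline{\beta}$ is equivalent to the equality $\overline{\phi}(\mathrm{Bu}) = \overline{\phi}(\mathcal{B})$. By Theorem~3.17 we have the internal semidirect product decomposition
\begin{align*}
\overline{\phi}(\mathcal{B}) & = F \rtimes \overline{\phi}(\mathrm{Bu}),
\end{align*}
where $F = \ker \epsilon$ is the normal complement to $\overline{\phi}(\mathrm{Bu})$. Since in any internal semidirect product the factor group coincides with the full group exactly when the normal complement is trivial, we conclude that $\overline{\phi}(\mathrm{Bu}) = \overline{\phi}(\mathcal{B})$ if and only if $F = 1$, which proves both directions of the lemma simultaneously.

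The only point deserving care, rather than an actual obstacle, is to verify that the isomorphism $\overline{\Gamma} \cong \overline{\phi}(\mathcal{B})$ is compatible with the subgroup $\overline{\beta}(B_{3})$ on one side and $\overline{\phi}(\mathrm{Bu})$ on the other; this is immediate from the commutative square implicit in the diagram preceding Lemma~3.14, and once it is recorded the lemma follows in a single line from Theorem~3.17.
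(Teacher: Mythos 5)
Your proof is correct and follows essentially the same route as the paper: both reduce the question to the semidirect product decomposition $\overline{\phi}(\mathcal{B}) = F \rtimes \overline{\phi}(\mathrm{Bu})$ from Theorem 3.17, with the only cosmetic difference that the paper phrases the equivalence via surjectivity of the section $\mu$ while you phrase it via the equality $\overline{\phi}(\mathrm{Bu}) = \overline{\phi}(\mathcal{B})$, which are the same condition since $\mu$ has image $\overline{\phi}(\mathrm{Bu})$.
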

\begin{proof}

\noindent From Corollary 3.13 and Lemma 3.15, we have $\overline{\Gamma}\cong\overline{\phi}\left(\mathcal{B}\right)$.
Under this isomorphism, by the construction, the condition that the
map $\mu:\mathrm{PSL}\left(2,\,\mathbb{Z}\right)\to\overline{\phi}\left(\mathcal{B}\right)$
is surjective is equivalent to Question 2.2 being true. Since this
map makes the sequence induced by $\epsilon$
\begin{align*}
1 & \longrightarrow\ker\epsilon\longrightarrow\overline{\phi}\left(\mathcal{B}\right)\longrightarrow\mathrm{PSL}\left(2,\,\mathbb{Z}\right)\longrightarrow1
\end{align*}

\noindent split, the surjectivity of $\mu$ is again equivalent to
$F$ being trivial, where we have chosen $F=\ker\epsilon$ in the
proof of Theorem 3.17. $\qedhere$

\noindent \end{proof}

From now on, for an element $M\in\mathrm{PGL}\left(2,\,\mathbb{Q}\left[t,\,t^{-1}\right]\right)$
and pairs of integers $1\le i,\,j\le2$, we say $M_{ij}=0$ (resp.
$M_{ij}\ne0$) if $B_{ij}=0$ (resp. $B_{ij}\ne0$), for each representative
$B$ of $M$ in $\mathrm{GL}\left(2,\,\mathbb{Q}\left[t,\,t^{-1}\right]\right)$.

\noindent \begin{lemma}

There is no element ${M\in\overline{\phi}\left(\mathcal{B}\right)\cap\mathrm{PGL}\left(2,\,\mathbb{Z}\left[t,\,t^{-1}\right]\right)}$
such that $M_{12}\ne0$ if and only if the deviating subgroup $F$
is trivial.

\noindent \end{lemma}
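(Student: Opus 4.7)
The plan is to exploit the semidirect product decomposition $\overline{\phi}(\mathcal{B}) \cong F \rtimes \mu(\mathrm{PSL}(2,\mathbb{Z}))$ from Theorem~3.17, combined with Lemma~3.16 (which characterizes when $\phi(A) \in \mathrm{GL}(2,\mathbb{Z}[t,t^{-1}])$) and the explicit shape of $\phi(A)$ given by Corollary~3.6. I argue both directions by contrapositive.

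For ``$F$ trivial $\Rightarrow$ no such $M$'': if $F$ is trivial, then $\overline{\phi}(\mathcal{B})=\overline{\phi}(\mathrm{Bu})$, so every $M \in \overline{\phi}(\mathcal{B})$ has the form $\overline{\phi}(\tilde A)$ for some $\tilde A \in \mathrm{Bu}$. A short determinant chase upgrades the hypothesis $M\in\mathrm{PGL}(2,\mathbb{Z}[t,t^{-1}])$ to $\phi(\tilde A)\in\mathrm{GL}(2,\mathbb{Z}[t,t^{-1}])$: any scalar $c\in\mathbb{Q}(t)^{\times}$ with $c\phi(\tilde A)\in\mathrm{GL}(2,\mathbb{Z}[t,t^{-1}])$ must satisfy $c^{2}(-t)^{m}=\pm t^{j}$ by Lemma~3.10, and since $-1$ is not a square in $\mathbb{Q}(t)$ this forces $c=\pm t^{i}$ and hence $\phi(\tilde A)\in\mathrm{GL}(2,\mathbb{Z}[t,t^{-1}])$. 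Lemma~3.16 then gives $\rho(\tilde A)_{12}=0$, so $\overline{\rho}(\tilde A)$ is lower-triangular in $\mathrm{PSL}(2,\mathbb{Z})$; but the lower-triangular subgroup there is generated by $s_{2}$, so $\overline{\rho}(\tilde A)=s_{2}^{n}$ for some integer $n$. Lemma~3.9 then pins $\tilde A = \beta(\Delta)^{k}\beta(\sigma_{2})^{n}$. Using $\phi(\beta(\Delta))=t^{3}I$ (from the proof of Corollary~3.12) and $\phi(\beta(\sigma_{2}))=\mathrm{diag}(1,-t)$ from (19), the product $\phi(\tilde A)$ is diagonal, and hence $M_{12}=0$.

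For ``$F$ nontrivial $\Rightarrow$ such $M$ exists'': pick a nontrivial $f\in F$, which already lies in $\mathrm{PGL}(2,\mathbb{Z}[t,t^{-1}])$ by Theorem~3.17. Represent $f$ by some $A\in\mathcal{B}$; Corollary~3.6 shows that $\phi(A)$ has upper-right entry $g_{12}$, lower-left entry $-\det(A)(1+t^{-1}+t)\overline{g_{12}}$, and diagonal entries $g_{11}$ and $\det(A)\overline{g_{11}}$. If $g_{12}\ne 0$, take $M=f$. Otherwise $\phi(A)$ is diagonal, and its two diagonal entries must be distinct in $\mathbb{Q}(t)$ (else $f$ would be projectively trivial). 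Using the normality of $F$ in $\overline{\phi}(\mathcal{B})$, I conjugate $f$ by $\mu(s_{1})\in\overline{\phi}(\mathcal{B})$, obtaining another element of $F\subset\mathrm{PGL}(2,\mathbb{Z}[t,t^{-1}])$; a direct $2\times 2$ matrix multiplication shows the upper-right entry of $\mu(s_{1})f\mu(s_{1})^{-1}$ equals $t^{3}(g_{11}-\det(A)\overline{g_{11}})\ne 0$, so $M:=\mu(s_{1})f\mu(s_{1})^{-1}$ works.

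The main obstacle is the determinant chase in the first direction, needed to convert the a priori weak condition ``$M\in\mathrm{PGL}(2,\mathbb{Z}[t,t^{-1}])$'' into the usable condition ``$\phi(\tilde A)\in\mathrm{GL}(2,\mathbb{Z}[t,t^{-1}])$'' so that Lemma~3.16 applies; the remaining verifications reduce to direct matrix computations together with previously established lemmas, notably Lemmas~3.9, 3.10, 3.16 and Corollaries~3.6, 3.12.
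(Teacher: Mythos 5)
Your proof is correct, but it takes a genuinely different route from the paper's in both directions. The paper argues the two implications directly: given $M$ with $M_{12}\ne0$, it multiplies by a power of $\overline{\phi}\left(\beta\left(\sigma_{2}\right)\right)$ to land in $\ker\epsilon=F$ and observes the result is nontrivial because it cannot be diagonal; conversely, assuming every integral element has $M_{12}=0$, it uses the functional equation (9) (which forces $g_{11}\overline{g_{11}}=1$, hence $g_{11}=\pm t^{l}$) to classify all such elements as powers of $\overline{\phi}\left(\beta\left(\sigma_{2}\right)\right)$, on which $\epsilon$ is injective. You instead argue by contrapositive: for ``$F$ trivial $\Rightarrow$ no such $M$'' you route through $\mathrm{Bu}$, Lemma 3.9 and the decomposition $\tilde{A}=\beta\left(\Delta\right)^{k}\beta\left(\sigma_{2}\right)^{n}$, a detour the paper avoids since it never needs to realize $M$ by a braid; and for ``$F$ nontrivial $\Rightarrow$ such $M$ exists'' you replace the functional-equation classification of diagonal elements by the observation that $F=\ker\epsilon$ is normal, so a nontrivial diagonal element of $F$ conjugated by $\mu\left(s_{1}\right)$ yields an element of $F$ with nonvanishing off-diagonal entry (your stated value of that entry differs from the actual one by a scalar in $\mathbb{Q}\left(t\right)^{\times}$, which is immaterial projectively). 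Both proofs rest on the same two pillars --- Lemma 3.16 translating integrality of $\phi\left(A\right)$ into $\epsilon\left(M\right)$ being lower-triangular, and the fact that the lower-triangular subgroup of $\mathrm{PSL}\left(2,\,\mathbb{Z}\right)$ is generated by $s_{2}$ --- but your conjugation trick buys independence from equation (9), while the paper's version yields the sharper byproduct that every integral element of $\overline{\phi}\left(\mathcal{B}\right)$ with $M_{12}=0$ is a power of $\overline{\phi}\left(\beta\left(\sigma_{2}\right)\right)$. Your determinant chase upgrading ``$M$ has an integral representative'' to ``$\phi\left(\tilde{A}\right)$ itself is integral'' is a point the paper leaves implicit, and you handle it correctly.
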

\begin{proof}

\noindent Suppose an element $M\in\overline{\phi}\left(\mathcal{B}\right)\cap\mathrm{PGL}\left(2,\,\mathbb{Z}\left[t,\,t^{-1}\right]\right)$
always satisfies $M_{12}=0$. By Corollary 3.6, we have $M_{12}=0$
if and only if $M_{21}=0$. Moreover, for some matrix $A\in\mathcal{B}$
such that $\overline{\phi}\left(A\right)=M$, we have
\[
A=\left(\begin{array}{cc}
g_{11} & 0\\
0 & \det A\overline{g_{11}}
\end{array}\right).
\]

By (9), the condition $g_{11}\overline{g_{11}}=1$ forces the only
possible candidates of $g_{11}$ to be $\pm t^{l}$ for an integer
$l$. By Lemma 3.10, we have $\det A=\left(-t\right)^{k}$ for some
integer $k$. By substituting this, we have
\begin{align*}
M & =\left[\begin{array}{cc}
1 & 0\\
0 & \left(-t\right)^{k-2l}
\end{array}\right],
\end{align*}
which is equal to $\overline{\phi}\left(\beta\left(\sigma_{2}\right)\right)^{k-2l}$
from (19). Therefore, when $M_{12}=0$, we always have $\epsilon\left(M\right)=1$
if and only if $M=1$.

Conversely, suppose there is an element $M\in\overline{\phi}\left(\mathcal{B}\right)\cap\mathrm{PGL}\left(2,\,\mathbb{Z}\left[t,\,t^{-1}\right]\right)$
such that $M_{12}\ne0$. By Lemma 3.16, we have $\epsilon\left(M\right)_{12}=0$.
Since the lower triangular subgroup of $\mathrm{PSL}\left(2,\,\mathbb{Z}\right)$
is generated by
\begin{align*}
\overline{\rho}\left(\beta\left(\sigma_{2}\right)\right) & =\left[\begin{array}{cc}
1 & 0\\
1 & 1
\end{array}\right],
\end{align*}
which is from (20), there exists an integer $m$ such that $\epsilon\left(M\overline{\phi}\left(\beta\left(\sigma_{2}\right)\right)^{m}\right)=1$.
From (19), we have $\overline{\phi}\left(\beta\left(\sigma_{2}\right)\right)^{-m}=\left[\begin{array}{cc}
1 & 0\\
0 & \left(-t\right)^{-m}
\end{array}\right]$, which cannot be the same as $M$. $\qedhere$

\noindent \end{proof}

By Lemma 4.2, it is sufficient for us to analyze the elements of $\overline{\phi}\left(\mathcal{B}\right)\cap\mathrm{PGL}\left(2,\,\mathbb{Z}\left[t,\,t^{-1}\right]\right)$.
For the sake of abbreviation, from now on, denote the Laurent polynomial
$1+t^{-1}+t$ by $\Phi$. Let us define a group, which is a subgroup
with rational determinants of the projective similitude group.

\noindent \begin{definition}

Define the \emph{quaternionic group} as
\begin{align*}
\mathcal{Q}\, & :=\,\left\{ M\in\mathrm{PGL}\left(2,\,\mathbb{Q}\left[t,\,t^{-1}\right]\right)\::\:M=\left[\begin{array}{cc}
g_{1} & g_{2}\\
-\Phi\overline{g_{2}} & \overline{g_{1}}
\end{array}\right],\,\mathrm{where}\;g_{1},\,g_{2}\in\mathbb{Q}\left[t,\,t^{-1}\right]\right\} .
\end{align*}

In addition, define the \emph{integral subgroup} $U$ to be $\mathcal{Q}\cap\mathrm{PGL}\left(2,\,\mathbb{Z}\left[t,\,t^{-1}\right]\right)$,
and define $U^{1}$ to be the subgroup of $U$ of elements having
a representative $B$ such that $\det\left(B\right)=1$.

\noindent \end{definition}

For each element $M\in\mathcal{Q}$, by definition, there exists a
representative $B=\left(\begin{array}{cc}
g_{1} & g_{2}\\
-\Phi\overline{g_{2}} & \overline{g_{1}}
\end{array}\right)$. Since $\overline{\Phi}=\Phi$, we observe that
\begin{align*}
\det\left(B\right) & =\overline{\det\left(B\right)}.
\end{align*}

Since the only palindromic units in $\mathbb{Q}\left[t,\,t^{-1}\right]$
are nonzero rationals, we have a functional equation for elements
of $\mathcal{Q}$ generalizing (23)
\begin{align}
g_{1}\overline{g_{1}}+\left(1+t^{-1}+t\right)g_{2}\overline{g_{2}} & =k,
\end{align}
where $k$ is a nonzero rational. Obviously, the number $k$ depends
on the element $M$ up to multiplication by a square rational.

\noindent \begin{lemma}

The group $\overline{\phi}\left(\mathcal{B}\right)\cap\mathrm{PGL}\left(2,\,\mathbb{Z}\left[t,\,t^{-1}\right]\right)$
contains $U^{1}$.

\noindent \end{lemma}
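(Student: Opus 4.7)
The plan is to prove the containment by producing, for every $M \in U^1$, a representative that fits the hypotheses of Lemma 3.11.

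The first step is to extract a representative of $M$ that is simultaneously integral, of quaternionic form, and has determinant $1$. By definition, $M \in U^1$ has an integer representative $B_0 \in \mathrm{GL}(2, \mathbb{Z}[t,t^{-1}])$ with $\det(B_0) = 1$, and $M \in \mathcal{Q}$ also gives a quaternionic representative $B' = \begin{pmatrix} g_1' & g_2' \\ -\Phi \overline{g_2'} & \overline{g_1'}\end{pmatrix}$ over $\mathbb{Q}$. Since both project to the same class in $\mathrm{PGL}(2, \mathbb{Q}[t,t^{-1}])$, we have $B' = w B_0$ for some unit $w = q t^m \in \mathbb{Q}^\times \cdot t^{\mathbb{Z}}$. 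But $\det(B') = w^2 = q^2 t^{2m}$, while $\det(B') = g_1'\overline{g_1'} + \Phi g_2'\overline{g_2'}$ is palindromic ($\overline{\cdot}$-invariant); comparing, $t^{2m} = t^{-2m}$ forces $m = 0$. Thus $w \in \mathbb{Q}^\times$, and $B_0$ itself is of quaternionic form with integer entries $g_1, g_2 \in \mathbb{Z}[t,t^{-1}]$ satisfying $g_1 \overline{g_1} + \Phi g_2 \overline{g_2} = 1$.

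The second step exploits the vanishing $\Phi(\zeta_3) = 1 + \zeta_3 + \zeta_3^{-1} = 0$. Specializing the functional equation at $t = \zeta_3$ yields $g_1(\zeta_3)\overline{g_1(\zeta_3)} = 1$, so $g_1(\zeta_3)$ is a unit in $\mathbb{Z}[\zeta_3]$; the Eisenstein units are exactly $\epsilon \zeta_3^k$ with $\epsilon \in \{\pm 1\}$ and $k \in \{0,1,2\}$. Picking $m \equiv -k \pmod 3$ and setting $u := \epsilon t^m \in \mathbb{Z}[t,t^{-1}]^\times$, we have $(ug_1)(\zeta_3) = \epsilon^2 \zeta_3^{m+k} = 1$. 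The rescaled matrix $B := uB_0$ still represents $M$ in $\mathrm{PGL}(2, \mathbb{Z}[t,t^{-1}])$; because $u\overline{u} = 1$, a short verification shows that $B$ still has the quaternionic shape of Lemma 3.11, now with $\det(B) = u^2 = t^{2m} = (-t)^{2m}$. Lemma 3.11 then yields $B \in \phi(\mathcal{B})$, whence $M = [B] \in \overline{\phi}(\mathcal{B}) \cap \mathrm{PGL}(2, \mathbb{Z}[t,t^{-1}])$.

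The main obstacle is the first step: Lemma 3.11 is rigid about both the matrix form and the allowed determinant, so one must reconcile the two natural representatives of $M$ into a single one with all the prescribed properties. The key is the palindromicity of the quaternionic determinant, which precisely prohibits a nontrivial $t^m$-rescaling. Once Step 1 is in place, Step 2 is essentially forced: the normalization $g_1(\zeta_3) = 1$ is dictated by the evaluation map $\mathbb{Z}[t,t^{-1}]^\times \twoheadrightarrow \mathbb{Z}[\zeta_3]^\times$, which happens to be surjective onto all Eisenstein units.
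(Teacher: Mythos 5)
Your proof is correct and follows essentially the same route as the paper: specialize the determinant-one functional equation at $t=\zeta_{3}$ to see that $g_{1}|_{t=\zeta_{3}}$ is an Eisenstein unit, rescale by a unit $\pm t^{m}$ of $\mathbb{Z}\left[t,\,t^{-1}\right]$ to normalize $g_{1}|_{t=\zeta_{3}}=1$, and invoke Lemma 3.11. Your Step 1, which reconciles the integral and quaternionic representatives via palindromicity of the determinant, makes explicit a point the paper leaves implicit, but the core argument is identical.
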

\begin{proof}

\noindent Take an element $M\in U^{1}$, and choose its representative
$B=\left(\begin{array}{cc}
g_{1} & g_{2}\\
-\Phi\overline{g_{2}} & \overline{g_{1}}
\end{array}\right)$ such that $g_{1},\,g_{2}\in\mathbb{Z}\left[t,\,t^{-1}\right]$ and
$\det\left(B\right)=1$. Then, the condition about the determinant
yields
\begin{align}
g_{1}\overline{g_{1}}+\left(1+t^{-1}+t\right)g_{2}\overline{g_{2}} & =1.
\end{align}

For a third root of unity $\zeta_{3}$, by evaluating (28) at $t=\zeta_{3}$,
we have $\left\Vert g_{1}|_{t=\zeta_{3}}\right\Vert ^{2}=1$, where
$\left\Vert \cdot\right\Vert $ is the usual complex norm. Therefore,
$g_{1}|_{t=\zeta_{3}}$ is a unit in $\mathbb{Z}\left[\zeta_{3}\right]$,
and the only candidates are $\pm1$, $\pm\zeta_{3}$, $\pm\zeta_{3}^{2}$.
Then, exactly one matrix $B'$ among
\begin{align*}
\pm B,\;\pm t^{-1}B,\;\pm t^{-2}B
\end{align*}
satisfies $B'_{11}|_{t=\zeta_{3}}=1$. By Lemma 3.11, we have $B'\in\phi\left(\mathcal{B}\right)$.
Since $B'$ is still a representative of $M$, we also have $M\in\overline{\phi}\left(\mathcal{B}\right)$.$\qedhere$

\noindent \end{proof}

Here is the characterization of the group $U^{1}$ in the image $\overline{\phi}\left(\mathcal{B}\right)$.

\noindent \begin{lemma}

For an element $M\in\overline{\phi}\left(\mathcal{B}\right)\cap\mathrm{PGL}\left(2,\,\mathbb{Z}\left[t,\,t^{-1}\right]\right)$,
the following are equivalent:
\begin{description}
\item [{(a)}] $M\in U^{1}.$
\item [{(b)}] There is a matrix $A\in\mathcal{B}$ such that $M=\overline{\phi}\left(A\right)$
and $\det\left(A\right)=\left(-t\right)^{2l}$ for some integer $l$.
\item [{(c)}] There is a matrix $A\in\mathcal{B}$ such that $M=\overline{\phi}\left(A\right)$
and $\rho\left(A\right)_{21}$ is an even integer.
\end{description}
In particular, the index $\left[\overline{\phi}\left(\mathcal{B}\right)\cap\mathrm{PGL}\left(2,\,\mathbb{Z}\left[t,\,t^{-1}\right]\right)\::\:U^{1}\right]$
is 2, and the coset of $U^{1}$ not including the identity has a representative
$\overline{\phi}\left(\beta\left(\sigma_{2}\right)\right)=\left[\begin{array}{cc}
1 & 0\\
0 & -t
\end{array}\right]$.

\noindent \end{lemma}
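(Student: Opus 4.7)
The plan is to first observe that the hypothesis $M\in\overline{\phi}(\mathcal{B})\cap\mathrm{PGL}(2,\mathbb{Z}[t,t^{-1}])$ already forces $\rho(A)_{12}=0$ for any representative $A$ of $M$. Indeed, the entries of $\phi(A)$ lie in $\mathbb{Z}[t,t^{-1},(1+t)^{-1}]$, while $\det\phi(A)=\det A=(-t)^{k}$ has no factor of $(1+t)$; so any scalar $c\in\mathbb{Q}(t)^{\times}$ making $c\phi(A)\in\mathrm{GL}(2,\mathbb{Z}[t,t^{-1}])$ must satisfy $c^{2}(-t)^{k}\in\{\pm t^{m}\}$, forcing $c=\pm t^{j}$; but such monomial scaling cannot remove $(1+t)$ from denominators, so $\phi(A)$ must already lie in $\mathrm{GL}(2,\mathbb{Z}[t,t^{-1}])$. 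By Lemma 3.16, this is $\rho(A)_{12}=0$, and consequently $g_{11},g_{12}\in\mathbb{Z}[t,t^{-1}]$ and the identity $g_{11}\overline{g_{11}}+\Phi g_{12}\overline{g_{12}}=1$ follows from $\det\phi(A)=\det A$ and Corollary 3.6.

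For (b) $\Rightarrow$ (a) I would simply multiply $\phi(A)$ by $t^{-l}$ when $\det A=(-t)^{2l}$: the result is a matrix whose upper-left is $t^{-l}g_{11}$ and lower-right is $t^{l}\overline{g_{11}}=\overline{t^{-l}g_{11}}$, so it is of quaternionic form; by the preceding paragraph its entries are in $\mathbb{Z}[t,t^{-1}]$, and its determinant is $g_{11}\overline{g_{11}}+\Phi g_{12}\overline{g_{12}}=1$, witnessing $M\in U^{1}$. The converse direction (a) $\Rightarrow$ (b) reuses the normalization procedure in the proof of Lemma 4.4: starting from a quaternionic-form integer representative $B$ of $M$ with $\det B=1$, one finds $\epsilon t^{j}B=\phi(A)$ for some $A\in\mathcal{B}$, so $\det A=\epsilon^{2}t^{2j}=(-t)^{2j}$.

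For (b) $\Leftrightarrow$ (c), the crucial point is the congruence $-1\equiv1\pmod{2}$: since the entries of $A$ are integer Laurent polynomials, $A|_{t=-1}\equiv A|_{t=1}\pmod{2}$ entrywise. By Lemma 3.15, $A|_{t=1}$ is the permutation matrix $P_{\sigma}$ for some $\sigma\in S_{3}$, so $\rho(A)\equiv\rho(P_{\sigma})\pmod{2}$. Tabulating $\rho$ on the six permutations of $S_{3}$, one sees that $\rho(P_{\sigma})_{12}\equiv0\pmod{2}$ holds precisely when $\sigma\in\{e,(23)\}$; in these cases $\rho(P_{\sigma})_{21}$ is $0$ and $1$ respectively, while $\det P_{\sigma}=\mathrm{sgn}(\sigma)$ is $+1$ and $-1$. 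Since $\det A|_{t=1}=(-1)^{k}$, the conditions ``$k$ even'', ``$\sigma=e$'', and ``$\rho(A)_{21}$ even'' all coincide.

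For the index statement, define a homomorphism $\overline{\phi}(\mathcal{B})\cap\mathrm{PGL}(2,\mathbb{Z}[t,t^{-1}])\to\mathbb{Z}/2$ by sending $M$ to the parity of $k$, where $\det A=(-t)^{k}$ for any representative $A$. This is well-defined because replacing $A$ by $A\beta(\Delta)^{n}$ shifts $k$ by the even integer $6n$, and it is a homomorphism by multiplicativity of $\det$. Its kernel is $U^{1}$ by (a) $\Leftrightarrow$ (b), and $\overline{\phi}(\beta(\sigma_{2}))$ (with $\det\beta(\sigma_{2})=-t$ of odd exponent) exhibits surjectivity. The main obstacle I anticipate is the first step---deducing $\phi(A)\in\mathrm{GL}(2,\mathbb{Z}[t,t^{-1}])$ from the ostensibly weaker hypothesis that \emph{some} representative of $M$ has integer entries; once that is secured, the remaining arguments are either routine scaling or the neat mod-$2$ observation.
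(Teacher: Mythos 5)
Your proof is correct. The parts (a)$\Leftrightarrow$(b) and the index computation follow essentially the paper's route: the paper also normalizes a determinant-one integral representative via the argument of Lemma 4.4 for (a)$\Rightarrow$(b) and rescales $\phi(A)$ by a power of $(-t)$ for (b)$\Rightarrow$(a); your preliminary observation that $M\in\mathrm{PGL}\left(2,\,\mathbb{Z}\left[t,t^{-1}\right]\right)$ already forces $\phi(A)\in\mathrm{GL}\left(2,\,\mathbb{Z}\left[t,t^{-1}\right]\right)$ makes explicit something the paper uses only implicitly through Lemmas 3.16 and 4.4. Where you genuinely diverge is (b)$\Leftrightarrow$(c). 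The paper proves this by a derivative computation (its Claim 1): it applies L'Hopital's rule to express $\rho(A)_{21}=1-A_{33}|_{t=-1}$ in terms of $\left(g_{11}'\right)^{e}$, $\left(g_{12}'\right)^{e}$ and $\left(d'\right)^{e}=-l$, then reads off the parity from $g_{11}^{e}-g_{12}^{e}=\pm1$. You instead reduce mod $2$: since $A$ has entries in $\mathbb{Z}\left[t,t^{-1}\right]$, one has $A|_{t=-1}\equiv A|_{t=1}\pmod{2}$, and by Lemma 3.15 the latter is a permutation matrix $P_{\sigma}$; the hypothesis $\rho(A)_{12}=0$ restricts $\sigma$ to $\{e,(23)\}$, and on these two permutations the parity of $\rho\left(P_{\sigma}\right)_{21}=1-\left(P_{\sigma}\right)_{33}$ matches $\det P_{\sigma}=(-1)^{k}$ exactly. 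This is shorter and avoids the derivative bookkeeping entirely, at the cost of invoking Lemma 3.15 (the $S_{3}$ condition at $t=1$), which the paper's Claim 1 does not need. Note that the restriction to $\sigma\in\{e,(23)\}$ is essential --- for instance $\sigma=(12)$ has $\left(P_{\sigma}\right)_{33}=1$ but $\det P_{\sigma}=-1$ --- and you correctly secure it from $\rho(A)_{12}=0$.
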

\begin{proof}

\noindent $(a)\Rightarrow(b)$ Take an element $M\in U^{1}$, and
choose its representative $B=\left(\begin{array}{cc}
g_{1} & g_{2}\\
-\Phi\overline{g_{2}} & \overline{g_{1}}
\end{array}\right)$ such that $g_{1},\,g_{2}\in\mathbb{Z}\left[t,\,t^{-1}\right]$ and
$\det B=1$. As in the proof of Lemma 4.4, there exists a unique representative
$B'$ of $M$ among 
\begin{align*}
\pm B,\;\pm t^{-1}B,\;\pm t^{-2}B,
\end{align*}

\noindent such that there exists a matirx $A\in\mathcal{B}$ such
that $\phi\left(A\right)=B'$. By the construction, the possible candidates
of $\det A$ are 1, $\left(-t\right)^{-2}$, and $\left(-t\right)^{-4}$,
from the preservation of the determinant (${\mathrm{Lemma}\;3.4}$)
and Corollary 3.6.

\noindent $(b)\Rightarrow(a)$ Suppose $\phi\left(A\right)=\left(\begin{array}{cc}
g_{11} & g_{12}\\
-\Phi\det\left(A\right)\overline{g_{12}} & \det\left(A\right)\overline{g_{11}}
\end{array}\right)$ from Corollary 3.6.

Since $\det A=\left(-t\right)^{2l}$ for some integer $l$, the matrix
$\left(-t\right)^{-l}\phi\left(A\right)$ is of the form
\begin{align*}
\left(\begin{array}{cc}
g_{1} & g_{2}\\
-\Phi\overline{g_{2}} & \overline{g_{1}}
\end{array}\right),
\end{align*}
which implies the group $U^{1}$ includes $M=\overline{\phi}\left(A\right)$. 

\noindent $(b)\Leftrightarrow(c)$ The value $\rho\left(A\right)_{21}$
was defined to be $1-\left(A|_{t=-1}\right)_{33}$. In this proof,
temporarily denote by $f'$ the derivative of the Laurent polynomial
$f$ on the variable $t$, and denote by $f^{e}$ the evaluation $f|_{t=-1}$. 

\noindent \begin{claim1}

For a matrix $A\in\mathcal{B}$ such that $\phi\left(A\right)\in\mathrm{GL}\left(2,\,\mathbb{Z}\left[t,\,t^{-1}\right]\right)$,
we have
\begin{align}
1-A_{33}^{e} & =2\left(g_{11}'\right)^{e}-2\left(g_{12}'\right)^{e}+\left(d'\right)^{e}\left(g_{12}^{e}-g_{11}^{e}\right).
\end{align}
\end{claim1}
\begin{proofclaim1}

\noindent By the assumption, we may assume that $g_{11},\,g_{12}$
are Laurent polynomials and $f_{11}^{e}=0=f_{12}^{e}$. From (12)
and (11), we have
\begin{align}
 & A_{11}^{e},=1,\:A_{12}^{e}=g_{11}^{e}-g_{12}^{e}-1,\\
 & A_{21}^{e}=1+f_{21}^{e},\:A_{22}^{e}=-1-f_{22}^{e}-\left(f_{21}'\right)^{e}+\left(f_{22}'\right)^{e}.
\end{align}

Denote by $d$ the determinant $\det A$. From (13), (14), and the
definition $g_{ij}=\frac{f_{ij}}{t+t^{2}}$, by applying L'Hopital's
rule, we expand

\noindent 
\begin{align}
 & f_{21}^{e}=-g_{11}^{e}-g_{12}^{e},\\
 & f_{22}^{e}=-g_{11}^{e}-g_{12}^{e},\\
 & \left(f_{21}'\right)^{e}=g_{12}^{e}-\left(d'\right)^{e}g_{12}^{e}-\left(g_{11}'\right)^{e}+\left(g_{12}'\right)^{e}\\
 & \left(f_{22}'\right)^{e}=g_{11}^{e}-\left(d'\right)^{e}g_{11}^{e}-\left(g_{12}'\right)^{e}+\left(g_{11}'\right)^{e}
\end{align}

\noindent where we simplified the equations by using $f_{11}^{e}=0=f_{12}^{e}$
and $d^{e}=1$. By substituting (32), (33), (34), and (35) into (31),
we have
\begin{align}
 & A_{21}^{e}=1-g_{11}^{e}-g_{12}^{e},\\
 & A_{22}^{e}=-1+2g_{11}^{e}+2\left(g_{11}'\right)^{e}-2\left(g_{12}'\right)^{e}+\left(d'\right)^{e}\left(g_{12}^{e}-g_{11}^{e}\right).
\end{align}

From (18), we deduce (29) as
\begin{align*}
 & \,1-A_{33}^{e}\\
= & \,A_{13}^{e}-A_{23}^{e}\\
= & \,A_{21}^{e}+A_{22}^{e}-A_{11}^{e}-A_{12}^{e}\\
= & \,2\left(g_{11}'\right)^{e}-2\left(g_{12}'\right)^{e}+\left(d'\right)^{e}\left(g_{12}^{e}-g_{11}^{e}\right),
\end{align*}

\noindent where we used the substitution from (30), (36), and (37)
at the end.$\qed$

\noindent \end{proofclaim1}

We return to the proof of Lemma 4.5. By dividing (9) by $\frac{\left(1+t\right)^{2}}{t}$
and evaluating it at $t=-1$, we have a Diophantine equation
\begin{align*}
\left(g_{11}^{e}-g_{12}^{e}\right)\left(g_{11}^{e}+g_{12}^{e}\right) & =1,
\end{align*}

\noindent which implies $\left(g_{12}^{e}-g_{11}^{e}\right)$ in (29)
is $\pm1$, and especially an odd integer. Put $d=\left(-t\right)^{l}$.
Then we have $\left(d'\right)^{e}=-l$; by (29) in Claim 1, we conclude
the proof. $\qedhere$

\noindent \end{proof}
\begin{corollary}

The group $U^{1}$ contains the deviating subgroup $F$.

\noindent \end{corollary}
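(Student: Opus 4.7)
The plan is to unwind the definitions and then apply the already-established equivalences of Lemma~4.5. An element $M\in F=\ker\epsilon$ is, by the definition of $\epsilon$, of the form $M=\overline{\phi}(A)$ for some $A\in\mathcal{B}$ with $\overline{\rho}(A)=1$ in $\mathrm{PSL}(2,\mathbb{Z})$. Lifting to $\mathrm{GL}(2,\mathbb{Z})$, this means $\rho(A)=\pm I$. So the two off-diagonal entries of $\rho(A)$ vanish, and my plan is to extract exactly what I need from each of them.

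First, from $\rho(A)_{12}=0$ and Lemma~3.16, the matrix $\phi(A)$ already lies in $\mathrm{GL}(2,\mathbb{Z}[t,t^{-1}])$, so that $M\in\overline{\phi}(\mathcal{B})\cap\mathrm{PGL}(2,\mathbb{Z}[t,t^{-1}])$. This places $M$ in the set where the equivalences of Lemma~4.5 apply. Second, from $\rho(A)_{21}=0$, the integer $\rho(A)_{21}$ is in particular even. Invoking Lemma~4.5 in the direction $(c)\Rightarrow(a)$, I conclude $M\in U^{1}$.

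No real obstacle is expected here: the entire content has been front-loaded into Lemma~3.16 and Lemma~4.5. The only thing to be careful about is matching the representatives correctly when passing from the congruence $\overline{\rho}(A)=1$ in $\mathrm{PSL}(2,\mathbb{Z})$ to the identity $\rho(A)=\pm I$ in $\mathrm{GL}(2,\mathbb{Z})$, since both lifts send the $(2,1)$-entry to $0$; hence the parity hypothesis in~(c) is satisfied automatically.
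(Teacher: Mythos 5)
Your proposal is correct and follows essentially the same route as the paper: the paper's one-line proof ("each element of $F$ satisfies condition (c) in Lemma 4.5") implicitly relies on exactly the unwinding you spell out, namely that $M\in\ker\epsilon$ forces $\rho(A)=\pm I$, so $\rho(A)_{12}=0$ places $M$ in $\overline{\phi}(\mathcal{B})\cap\mathrm{PGL}(2,\mathbb{Z}[t,t^{-1}])$ via Lemma 3.16 (as already noted in the proof of Theorem 3.17), and $\rho(A)_{21}=0$ is even, so Lemma 4.5 (c)$\Rightarrow$(a) applies. Your version is just a more explicit writing-out of the same argument.
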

\begin{proof}

\noindent By definition, each element $M\in F$ satisfies the condition
(c) in Lemma 4.5. $\qedhere$

\noindent \end{proof}
\begin{corollary}

Question 2.2 is true if and only if there is no element $M\in U^{1}$
such that $M_{12}\ne0$.

\noindent \end{corollary}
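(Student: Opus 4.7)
The plan is to combine Lemma 4.1 and Lemma 4.2 with the index-two description of $U^{1}$ inside $\overline{\phi}\left(\mathcal{B}\right)\cap\mathrm{PGL}\left(2,\,\mathbb{Z}\left[t,\,t^{-1}\right]\right)$ provided by Lemma 4.5. By Lemmas 4.1 and 4.2, Question 2.2 is true if and only if no element $M$ of $\overline{\phi}\left(\mathcal{B}\right)\cap\mathrm{PGL}\left(2,\,\mathbb{Z}\left[t,\,t^{-1}\right]\right)$ has $M_{12}\ne0$. So it suffices to show that the existence of such an $M$ is equivalent to the existence of such an element inside $U^{1}$.

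One direction is immediate: by Lemma 4.4, $U^{1}$ is contained in $\overline{\phi}\left(\mathcal{B}\right)\cap\mathrm{PGL}\left(2,\,\mathbb{Z}\left[t,\,t^{-1}\right]\right)$, so if no element in the larger group has nonzero $\left(1,2\right)$ entry, then certainly no element of $U^{1}$ does either. For the other direction, I would start with an arbitrary $M\in\overline{\phi}\left(\mathcal{B}\right)\cap\mathrm{PGL}\left(2,\,\mathbb{Z}\left[t,\,t^{-1}\right]\right)$ with $M_{12}\ne0$ and use the second clause of Lemma 4.5, which says that $U^{1}$ has index 2 in that group with coset representative $\overline{\phi}\left(\beta\left(\sigma_{2}\right)\right)=\left[\begin{array}{cc}1 & 0\\0 & -t\end{array}\right]$. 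If $M$ already lies in $U^{1}$, we are done. Otherwise, I would replace $M$ by the product $M\cdot\overline{\phi}\left(\beta\left(\sigma_{2}\right)\right)\in U^{1}$ and simply compute
\begin{align*}
\left[\begin{array}{cc}M_{11} & M_{12}\\M_{21} & M_{22}\end{array}\right]\left[\begin{array}{cc}1 & 0\\0 & -t\end{array}\right] & =\left[\begin{array}{cc}M_{11} & -tM_{12}\\M_{21} & -tM_{22}\end{array}\right],
\end{align*}
whose $\left(1,2\right)$ entry $-tM_{12}$ is nonzero because $M_{12}\ne0$. This yields an element of $U^{1}$ with nonzero $\left(1,2\right)$ entry, as required.

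There is no real obstacle here; the corollary is a packaging step that simply transports the criterion from Lemma 4.2 into the more manageable subgroup $U^{1}$, exploiting the fact that the diagonal generator $\overline{\phi}\left(\beta\left(\sigma_{2}\right)\right)$ acts on $2\times2$ matrices by rescaling the second column and therefore preserves the condition $M_{12}\ne0$. The only care needed is to verify that the $U^{1}$-representative one obtains lies in $\mathrm{PGL}\left(2,\,\mathbb{Z}\left[t,\,t^{-1}\right]\right)$, which is automatic because multiplying by $\overline{\phi}\left(\beta\left(\sigma_{2}\right)\right)$ introduces only a factor of $-t$, not $t^{-1}$.
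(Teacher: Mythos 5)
Your proposal is correct and follows essentially the same route as the paper: reduce via Lemmas 4.1 and 4.2 to the larger group $\overline{\phi}\left(\mathcal{B}\right)\cap\mathrm{PGL}\left(2,\,\mathbb{Z}\left[t,\,t^{-1}\right]\right)$, handle one direction by the inclusion from Lemma 4.4, and handle the other by using the index-two coset structure of Lemma 4.5 to translate $M$ into $U^{1}$ by the diagonal representative, which only rescales the second column and so preserves $M_{12}\ne0$. The paper multiplies by $\left[\begin{array}{cc}1 & 0\\0 & \left(-t\right)^{-1}\end{array}\right]$ instead of $\left[\begin{array}{cc}1 & 0\\0 & -t\end{array}\right]$, but both land in $U^{1}$ and the argument is otherwise identical.
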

\begin{proof}

\noindent By Lemma 4.1 and Lemma 4.2, it suffices to show that there
is an element $M\in U^{1}$ such that $M_{12}\ne0$ if and only if
there is an element $M\in\overline{\phi}\left(\mathcal{B}\right)\cap\mathrm{PGL}\left(2,\,\mathbb{Z}\left[t,\,t^{-1}\right]\right)$
such that $M_{12}\ne0$. The ``only if'' part is trivial by Lemma
4.4.

Suppose there is an element $M\in\overline{\phi}\left(\mathcal{B}\right)\cap\mathrm{PGL}\left(2,\,\mathbb{Z}\left[t,\,t^{-1}\right]\right)$
such that $M_{12}\ne0$. By ${\mathrm{Lemma}\;4.5}$, we have $M\in U^{1}$
or $M\left[\begin{array}{cc}
1 & 0\\
0 & \left(-t\right)^{-1}
\end{array}\right]\in U^{1}$. For each case, we have $M_{12}\ne0$ or
\begin{align*}
\left(M\left[\begin{array}{cc}
1 & 0\\
0 & \left(-t\right)^{-1}
\end{array}\right]\right)_{12} & \ne0,
\end{align*}
by assumption. $\qedhere$

\noindent \end{proof}

What are candidates for elements of $\mathcal{Q}$ or $U$? While
a satisfactory answer regarding $U$ should be postponed to Section
5, we have an immediate answer regarding $\mathcal{Q}$. For a rational
number $r\in\mathbb{Q}$, note that
\begin{align*}
 & \,\det\left(\begin{array}{cc}
t-r^{2} & r\\
-r\left(1+t^{-1}+t\right) & t^{-1}-r^{2}
\end{array}\right)\\
= & \,1-r^{2}\left(t^{-1}+t\right)+r^{4}+r^{2}\left(1+t^{-1}+t\right)\\
= & \,1+r^{2}+r^{4},
\end{align*}
which is always a positive rational number.

\noindent \begin{definition}

For a rational number $r\in\mathbb{Q}$, define a matrix $g\left[r\right]$
as

\begin{equation}
g\left[r\right]\,:=\,\left(\begin{array}{cc}
t-r^{2} & r\\
-r\left(1+t^{-1}+t\right) & t^{-1}-r
\end{array}\right),
\end{equation}

\noindent which is included in $\mathrm{GL}\left(2,\,\mathbb{Q}\left[t,\,t^{-1}\right]\right)$.
Define the \emph{elementary generator} $\overline{g}\left[r\right]$
to be its projectivization $\pi\left(g\left[r\right]\right)$.

\noindent \end{definition}

We are ready to state the main theorem of this section.

\noindent \begin{theorem}

The quaternionic group $\mathcal{Q}$ is freely generated by the set
of all elementary generators $\left\{ \overline{g}\left[r\right]\right\} _{r\in\mathbb{Q}}$.

\noindent \end{theorem}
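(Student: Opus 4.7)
The plan is to prove generation and freeness simultaneously by designing a Euclidean-style reduction on a degree function built from the (essentially unique) quaternionic representative of $M \in \mathcal{Q}$. For such a representative $\bigl(\begin{smallmatrix} g_{1} & g_{2} \\ -\Phi\overline{g_{2}} & \overline{g_{1}} \end{smallmatrix}\bigr)$, I would first observe that scaling by $t^{n}$ with $n\neq 0$ destroys the symmetry between the diagonal entries, so the pair $(g_{1},g_{2})$ is determined by $M$ up to a nonzero rational scalar. Setting $d(M) := \mathrm{wid}(g_{1}) = \deg_{+} g_{1} - \deg_{-} g_{1}$, I would compare top and bottom $t$-degrees in the norm relation $g_{1}\overline{g_{1}} + \Phi g_{2}\overline{g_{2}} \in \mathbb{Q}^{\times}$ of (27) to conclude that either $g_{2}=0$ with $g_{1}$ a monomial (whence $M = \overline{g}[0]^{n}$ for a unique $n$), or else $\mathrm{wid}(g_{1}) = \mathrm{wid}(g_{2})+1 =: w$ with the key leading-coefficient identity $\alpha_{a}\alpha_{b} + \beta_{c}\beta_{d} = 0$ on the outermost coefficients of $g_{1}$ and $g_{2}$.

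For the reduction step when $w \geq 1$, I would expand $\overline{g}[r]^{-1}\!\cdot M$ in quaternion notation: its new ``second component'' is $(t^{-1}-r^{2})g_{2} - r\overline{g_{1}}$. A direct computation of its leading and trailing coefficients will show that they vanish simultaneously exactly when $r = -\alpha_{a}/\beta_{d} = \beta_{c}/\alpha_{b}$, the two equations coinciding precisely because of the identity $\alpha_{a}\alpha_{b} = -\beta_{c}\beta_{d}$ above. For this unique $r \neq 0$ the new width of $g_{2}'$ is at most $w - 2$, hence $d(\overline{g}[r]^{-1}M) \leq w - 1$. A parallel formula will handle the complementary parity of $(a+d)$ by using $\overline{g}[r]$ in place of its inverse, and any remaining discrepancy in $(a+d)$ will be absorbed by a canonical power of $\overline{g}[0]$, whose left action shifts $(a,b,c,d)$ uniformly but leaves $w$ invariant. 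Iterating at most $w$ times would then produce a finite word $\overline{g}[0]^{k_{0}}\overline{g}[r_{1}]^{\epsilon_{1}}\overline{g}[0]^{k_{1}}\cdots$ expressing $M$, which yields generation.

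For freeness, the crucial point will be that the choice of $(r,\epsilon,k)$ at every step is \emph{forced} by the leading coefficients and positional parity of $M$, which are intrinsic invariants of $M$. Therefore the reduction algorithm is a well-defined left inverse to the evaluation map from reduced words in $\{\overline{g}[r]\}_{r\in\mathbb{Q}}$ to $\mathcal{Q}$, and in particular no nontrivial reduced word represents the identity, delivering freeness on the stated generators.

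The hard part will be bookkeeping the interleaving of $\overline{g}[0]$-powers with the nontrivial elementary generators, since $\overline{g}[0]$ does not alter $d$. Making the reduction genuinely canonical --- so that the first two syllables of the reduced word for $M$ are recoverable from the positional parity and leading coefficients of $M$ alone, and no alternative elementary generator produces a valid reduction --- is the delicate uniqueness verification on which freeness ultimately depends.
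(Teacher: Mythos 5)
Your generation argument tracks the paper's own quite closely: both perform a Euclidean-style induction on the relative degree (your $d(M) = \mathrm{wid}(g_1)$ is exactly the paper's $\mathrm{rd}$), both derive the leading-coefficient identity from the norm relation (27), and both find the unique $r$ with the two cancellation equations coinciding because of that identity (your $r = -\alpha_a/\beta_d = \beta_c/\alpha_b$ is the paper's $r = -b_0^{-1}$ in the upper-balanced case, cf. its equation (39)). The paper works with right multiplication and "balanced companions" obtained by $B\mapsto Bg[0]^{k}$; you work with left multiplication, and so $\overline{g}[0]^k$ must be applied on the left — which, as you note, shifts the degree ranges of $g_1$ and $g_2$ uniformly. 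Be careful here: left multiplication by $\overline{g}[0]^k$ translates all four of $(a,b,c,d)$ by $k$, so it moves $a+d$ by $2k$ but leaves the difference $a-c$ (i.e., the upper/lower-balance type) fixed; one should check that the remaining degree of freedom in the projective representative (the scalar $t^n$, which you correctly observe is *not* available inside quaternionic form) together with $\overline{g}[0]^k$ really does let you normalize to one of the two aligned positions $d=-a$ or $d=-a-1$, which is where the sign $\epsilon$ gets decided.

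Where you genuinely diverge from the paper is the freeness step. The paper separates concerns: generation by degree-reduction, then freeness via the ping-pong lemma on the right action of $\mathcal{Q}$ on itself, with ping-pong sets $X(r)$ cut out by the "type" invariant $\tau$. Your proposal instead tries to upgrade the generation algorithm to a canonical normal form and deduce freeness from it being a left inverse to the word map. That is a legitimate strategy, and it is the classical dual to ping-pong, but it does not come for free: you still have to prove the "forcedness" claim — that for a product $\overline{g}[r_1]^{\epsilon_1}\cdots\overline{g}[r_n]^{\epsilon_n}$ in reduced form, the leading and trailing coefficients of the resulting quaternion pair recover $(r_1,\epsilon_1)$ (or $(r_n,\epsilon_n)$, depending on which end you reduce from). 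Concretely, you would have to show that multiplication by $\overline{g}[r_2]^{\pm1}$ cannot accidentally reproduce the leading-coefficient ratios characteristic of some other $\overline{g}[r']$, so that no alternative single-syllable reduction is available. This is precisely the content of the paper's Claim 2 and the type computation establishing $\tau(M\overline{g}[r_2]^m) = r_2$ when $\tau(M)=r_1\ne r_2$. You have identified this as the delicate step but have not carried it out, so as written the proposal establishes generation but leaves the freeness verification open; filling it in would essentially require rederiving the paper's ping-pong estimate under your left-multiplication/normal-form packaging.
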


Before proving this theorem, we first present two immediate applications.

\noindent \begin{corollary}

The integral subgroup $U$ is equal to the subgroup $U^{1}$.

\noindent \end{corollary}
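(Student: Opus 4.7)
The inclusion $U^{1}\subseteq U$ is immediate from the definition, so the task reduces to showing $U\subseteq U^{1}$. The plan is to triangulate, for each $M\in U$, two independent sources of representatives: the integer one coming from membership in $U$, and the quaternionic word provided by Theorem 4.9. Since $M\in\mathrm{PGL}\left(2,\,\mathbb{Z}\left[t,t^{-1}\right]\right)$, we can pick an integer representative $B\in\mathrm{GL}\left(2,\,\mathbb{Z}\left[t,t^{-1}\right]\right)$, whose determinant must be a unit in $\mathbb{Z}\left[t,t^{-1}\right]$; hence $\det(B)=\pm t^{n}$ for some integer $n$.

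Next, invoke Theorem 4.9 to write $M=\overline{g}\left[r_{1}\right]^{\epsilon_{1}}\cdots\overline{g}\left[r_{k}\right]^{\epsilon_{k}}$, and consider the literal matrix lift $W:=g\left[r_{1}\right]^{\epsilon_{1}}\cdots g\left[r_{k}\right]^{\epsilon_{k}}\in\mathrm{GL}\left(2,\,\mathbb{Q}\left[t,t^{-1}\right]\right)$. Each generator was computed in the discussion preceding Definition 4.8 to have determinant $1+r^{2}+r^{4}$, a strictly positive rational, so $\det(W)=\prod_{i}(1+r_{i}^{2}+r_{i}^{4})^{\epsilon_{i}}$ is a strictly positive rational.

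Because $B$ and $W$ both represent $M$ in $\mathrm{PGL}\left(2,\,\mathbb{Q}\left[t,t^{-1}\right]\right)$, they differ by a unit of $\mathbb{Q}\left[t,t^{-1}\right]$, which is necessarily of the form $\lambda=ct^{m}$ with $c\in\mathbb{Q}^{\times}$ and $m\in\mathbb{Z}$; say $B=ct^{m}W$. Taking determinants gives $\pm t^{n}=c^{2}t^{2m}\det(W)$. Since both $c^{2}$ and $\det(W)$ are positive, the sign must be $+$ and $n=2m$, so $c^{2}\det(W)=1$; in particular $\det(W)=1/c^{2}$ is a rational square. Setting $B':=cW=t^{-m}B$, the matrix $B'$ lies in $\mathrm{GL}\left(2,\,\mathbb{Z}\left[t,t^{-1}\right]\right)$ (being an integer rescaling of $B$ by a unit of $\mathbb{Z}\left[t,t^{-1}\right]$), represents $M$, and satisfies $\det(B')=c^{2}\det(W)=1$. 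Hence $M\in U^{1}$.

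No serious obstacle arises in this argument. The conceptual content is simply that Theorem 4.9 supplies a rational representative whose determinant is visibly a positive rational, and comparing it against the integer representative then pins down both the sign and the square-root ambiguity in a single step.
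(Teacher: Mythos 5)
Your proposal is correct and takes essentially the same route as the paper: invoke Theorem 4.9 to get a representative that is a word in elementary generators, observe that each $\det g\left[r\right]=1+r^{2}+r^{4}$ is a positive rational, and compare against the integral representative whose determinant is a unit $\pm t^{n}$. You simply spell out the scalar-comparison step (pinning down the sign and the power of $t$) that the paper's terser proof leaves implicit.
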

\begin{proof}

\noindent By Theorem 4.9, every element $M\in U$ has a representative
$B\in\mathrm{GL}\left(2,\,\mathbb{Q}\left[t,\,t^{-1}\right]\right)$
represented by a word in elementary generators up to scalar multiplication.
Each elementary generator $g\left[r\right]$ has the determinant $\det g\left[r\right]=1+r^{2}+r^{4}$,
which is always positive. Therefore, we conclude that $\det B$ is
also positive. $\qedhere$

\noindent \end{proof}

\noindent \begin{corollary}

The integral subgroup $U$ and the deviating subgroup $F$ are free.

\noindent \end{corollary}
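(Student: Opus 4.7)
The plan is to reduce the statement to a single application of the Nielsen--Schreier theorem. By Theorem 4.9, the quaternionic group $\mathcal{Q}$ is freely generated by $\{\overline{g}[r]\}_{r\in\mathbb{Q}}$, so $\mathcal{Q}$ is a (nonabelian) free group. Since freeness passes to subgroups, it will be enough to exhibit $U$ and $F$ as subgroups of $\mathcal{Q}$.

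For $U$, this is immediate from the definition: $U = \mathcal{Q}\cap\mathrm{PGL}(2,\mathbb{Z}[t,t^{-1}])$ is a subgroup of $\mathcal{Q}$, so Nielsen--Schreier yields that $U$ is free.

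For $F$, I would chain together the inclusions already proved. Corollary 4.6 gives $F\subseteq U^1$, and Corollary 4.10 gives $U^1=U$, so $F\subseteq U\subseteq\mathcal{Q}$. Applying Nielsen--Schreier again yields that $F$ is free.

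There is no real obstacle here: the content of the corollary is entirely absorbed into Theorem 4.9 (which required the heavy structural work) together with Corollaries 4.6 and 4.10. The proof is therefore a one-line appeal to the subgroup theorem for free groups, which I would simply cite (e.g., from a standard reference on combinatorial group theory).
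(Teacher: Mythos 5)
Your proposal is correct and matches the paper's argument: both apply the Nielsen--Schreier theorem to $U\le\mathcal{Q}$ (free by Theorem 4.9) and then to $F\le U^{1}\le U$ via Corollary 4.6. Your extra citation of Corollary 4.10 is harmless but unnecessary, since $U^{1}$ is already a subgroup of $U$ by definition.
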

\begin{proof}

\noindent Since every subgroup of a free group is again free, the
subgroup $U$ of $\mathcal{Q}$ is free from Theorem 4.9. By Corollary
4.6, the subgroup $F$ of $U$ is also free. $\qedhere$

\noindent \end{proof}

\noindent \begin{definition}

For a Laurent polynomial $g$:

\[
g=a_{0}t^{m}+a_{1}t^{m+1}+\cdots+a_{n-m}t^{n},
\]

\noindent with $n\ge m$ integers and $a_{0}\ne0\ne a_{n-m}$, define
the \emph{relative degree} $\mathrm{rd}\left(g\right):=n-m$.

\noindent \end{definition}

If a matrix $\left(\begin{array}{cc}
g_{1} & g_{2}\\
-\Phi\overline{g_{2}} & \overline{g_{1}}
\end{array}\right)$ is in $\mathrm{GL}\left(2,\,\mathbb{Q}\left[t,t^{-1}\right]\right)$,
from the functional equation (27), it is straightforward that

\[
\mathrm{rd}\left(g_{1}\right)=\mathrm{rd}\left(g_{2}\right)+1.
\]

Thus, when $\left(\begin{array}{cc}
g_{1} & g_{2}\\
-\Phi\overline{g_{2}} & \overline{g_{1}}
\end{array}\right)\in\mathrm{GL}\left(2,\,\mathbb{Q}\left[t,t^{-1}\right]\right)$, we also define

\[
\mathrm{rd}\left(\begin{array}{cc}
g_{1} & g_{2}\\
-\Phi\overline{g_{2}} & \overline{g_{1}}
\end{array}\right):=\mathrm{rd}\left(g_{1}\right).
\]

For a matrix $B=\left(\begin{array}{cc}
g_{1} & g_{2}\\
-\Phi\overline{g_{2}} & \overline{g_{1}}
\end{array}\right)\in\mathrm{GL}\left(2,\,\mathbb{Q}\left[t,t^{-1}\right]\right)$ such that $g_{2}\ne0$, write the first row entries as
\begin{align*}
 & g_{1}=a_{0}t^{m_{1}}+a_{1}t^{m_{1}+1}+\cdots+a_{n_{1}-m_{1}}t^{n_{1}},\\
 & g_{2}=b_{0}t^{m_{2}}+b_{1}t^{m_{2}+1}+\cdots+b_{n_{2}-m_{2}}t^{n_{2}}.
\end{align*}

Then, we also have $n_{1}-m_{1}=n_{2}-m_{2}+1$ from (27).

\noindent \begin{definition}

For a matrix $B=\left(\begin{array}{cc}
g_{1} & g_{2}\\
-\Phi\overline{g_{2}} & \overline{g_{1}}
\end{array}\right)\in\mathrm{GL}\left(2,\,\mathbb{Q}\left[t,t^{-1}\right]\right)$ such that $g_{2}\ne0$, write the first row entries as above. Define
$B$ to be \emph{upper-balanced} if $m_{1}=m_{2}$ and $n_{1}=n_{2}+1$,
and \emph{lower-balanced} if $m_{1}=m_{2}-1$ and $n_{1}=n_{2}$.
Also define $B$ to be \emph{balanced} if $B$ is upper-balanced or
lower-balanced. If $g_{2}=0$ or $B$ is not balanced, we call $B$
\emph{unbalanced}.

\noindent \end{definition}
\begin{lemma}

For a matrix $B=\left(\begin{array}{cc}
g_{1} & g_{2}\\
-\Phi\overline{g_{2}} & \overline{g_{1}}
\end{array}\right)\in\mathrm{GL}\left(2,\,\mathbb{Q}\left[t,t^{-1}\right]\right)$, suppose $g_{2}\ne0$. Then, there exists a unique balanced matrix
$\mathrm{bl}\left(B\right)$ among the set of matrices
\begin{align*}
\left\{ Bg\left[0\right]^{k}\;|\;k\in\mathbb{Z}\right\} .
\end{align*}

Let us call $\mathrm{bl}\left(B\right)$ the \emph{balanced companion
of} $B$.

\noindent \end{lemma}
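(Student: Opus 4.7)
The plan is to compute how right-multiplication by $g[0]^k$ acts on the entries of $B$, translate the two balanced conditions into a single linear equation in $k$, and invoke the exponent identity $n_1-m_1=n_2-m_2+1$ (noted just above Definition 4.13) to conclude that exactly one integer $k$ works.

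First I would observe from Definition 4.8 that $g[0]=\mathrm{diag}(t,t^{-1})$, so $g[0]^k=\mathrm{diag}(t^k,t^{-k})$ and
$$B\,g[0]^k=\left(\begin{array}{cc} t^{k}g_{1} & t^{-k}g_{2}\\ -\Phi t^{k}\overline{g_{2}} & t^{-k}\overline{g_{1}} \end{array}\right).$$
Since $\overline{t^{k}g_{1}}=t^{-k}\overline{g_{1}}$, this matrix still has the shape required by Definition 4.3, and the leading and trailing coefficients of $g_{1}$ and $g_{2}$ are preserved. In particular $t^{-k}g_{2}\ne 0$, so the relative degrees are unchanged, and the exponents of $Bg[0]^k$ are $(m_{1}',n_{1}',m_{2}',n_{2}')=(m_{1}+k,\,n_{1}+k,\,m_{2}-k,\,n_{2}-k)$.

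Next I would use the identity $n_{1}-m_{1}=n_{2}-m_{2}+1$ to collapse each pair of balanced conditions into one equation. The upper-balanced requirements $m_{1}'=m_{2}'$ and $n_{1}'=n_{2}'+1$ both reduce to $2k=m_{2}-m_{1}$, which admits an integer solution iff $m_{2}-m_{1}$ is even. Likewise, the lower-balanced requirements both reduce to $2k=m_{2}-m_{1}-1$, solvable in $\mathbb{Z}$ iff $m_{2}-m_{1}$ is odd. Exactly one of $m_{2}-m_{1}$ and $m_{2}-m_{1}-1$ is even, so precisely one integer $k$ makes $Bg[0]^{k}$ balanced, which gives both the existence and the uniqueness of $\mathrm{bl}(B)$. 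I do not expect any genuine obstacle: the lemma is bookkeeping on Laurent exponents, and its only substantive input — the degree constraint from the functional equation (27) — is already in place.
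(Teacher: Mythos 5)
Your proposal is correct and takes essentially the same route as the paper: both observe that $g[0]=\mathrm{diag}(t,t^{-1})$ shifts the exponents of $g_1$ and $g_2$ oppositely by $k$, and both rely on the relation $n_1-m_1=n_2-m_2+1$ to see that the two balancedness conditions collapse to a single parity constraint, yielding a unique integer $k$. The paper phrases this more tersely by tracking the single quantity $n_1-n_2$ (which lands in $\{0,1\}$ for balanced matrices) rather than all four exponents, but the argument is the same.
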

\begin{proof}

From (38), we have $g\left[0\right]=\left(\begin{array}{cc}
t & 0\\
0 & t^{-1}
\end{array}\right)$. Thus, the number $n_{1}-n_{2}$ increases by 2 multiplying $g\left[0\right]$
on the right of $B$, and decreases by 2 multiplying $g\left[0\right]^{-1}$.
Thus, the matrix $Bg\left[0\right]^{k}$ is balanced if and only if
$k=\left\lfloor \frac{n_{2}-n_{1}+1}{2}\right\rfloor $. $\qedhere$

\noindent \end{proof}
\begin{definition}

For a matrix $B=\left(\begin{array}{cc}
g_{1} & g_{2}\\
-\Phi\overline{g_{2}} & \overline{g_{1}}
\end{array}\right)\in\mathrm{GL}\left(2,\,\mathbb{Q}\left[t,t^{-1}\right]\right)$ with $g_{2}\ne0$, we can define the \emph{type} $\tau\left(B\right)$
to be $-a_{0}b_{0}^{-1}$ when $\mathrm{bl}\left(A\right)$ is upper-balanced,
and $-a_{0}^{-1}b_{0}$ when $\mathrm{bl}\left(B\right)$ is lower-balanced.
For an unbalanced matrix $B$, define $\tau\left(B\right)=0$. Abusing
notation, for an element $M=\left[\begin{array}{cc}
g_{1} & g_{2}\\
-\Phi\overline{g_{2}} & \overline{g_{1}}
\end{array}\right]\in\mathcal{Q}$, we also define $\tau\left(M\right)$ to be $\tau\left(B\right)$
for a representative $B$ of $M$.

\noindent \end{definition}
\begin{prooftheorem49}

\noindent We begin the proof of Theorem 4.9, which states that
\begin{description}
\item [{(a)}] \noindent the group $\mathcal{Q}$ is generated by the set
of all elementary generators $\left\{ \overline{g}\left[r\right]\right\} _{r\in\mathbb{Q}}$,
and
\item [{(b)}] \noindent the set of all elementary generators $\left\{ \overline{g}\left[r\right]\right\} _{r\in\mathbb{Q}}$
generates a free subgroup of $\mathcal{Q}$.
\end{description}
\noindent Therefore, the proof has two parts: the generation (a) and
the freeness (b).

\noindent \begin{proofofa}

\noindent We use the induction on the relative degree. Take an element
$M\in\mathcal{Q}$ such that $\mathrm{rd}\left(M\right)=1$ and choose
a representative $B$. We may assume that $B$ is balanced by replacing
it by the balanced companion by Lemma 4.14. By normalizing the entry
$B_{11}$ multiplying by some scalar $t^{l_{0}}$, the only possible
candidates for the representative are
\begin{align*}
B & =\left(\begin{array}{cc}
a_{0}+a_{1}t & b_{0}t^{l}\\
\cdots & \cdots
\end{array}\right),
\end{align*}
where the integer $l$ is 0 if $B$ is upper-balnced, and is $1$
if lower-balanced. Suppose $l=0$. From the functional equation (27),
for some nonzero rational $k$, we have
\begin{align*}
k= & \,\left(a_{0}+a_{1}t\right)\left(a_{0}+a_{1}t^{-1}\right)+\left(1+t+t^{-1}\right)b_{0}^{2}\\
= & \,\left(a_{0}a_{1}+b_{0}^{2}\right)\left(t+t^{-1}\right)+a_{0}^{2}+a_{1}^{2}+b_{0}^{2},
\end{align*}

\noindent from which we deduce $a_{0}a_{1}+b_{0}^{2}=0$ by comparing
the coefficients in both sides. Substituting $a_{0}=-b_{0}^{2}a_{1}^{-1}$,
we have
\begin{align*}
a_{1}^{-1}B & =\left(\begin{array}{cc}
t-\left(b_{0}a_{1}^{-1}\right)^{2} & \left(b_{0}a_{1}^{-1}\right)\\
* & *
\end{array}\right).
\end{align*}

Therefore, by (38), we have
\begin{align*}
\left(a_{1}^{-1}B\right)g\left[b_{0}a_{1}^{-1}\right]^{-1} & =\left(\begin{array}{cc}
1 & 0\\
* & *
\end{array}\right).
\end{align*}

As we assume that $M\in\mathcal{Q}$, there are some integer $l_{1}$
such that $M=\overline{g}\left[0\right]^{l_{1}}\overline{g}\left[b_{0}a_{1}^{-1}\right]$.
The case $l=1$ is almost the same, and we conclude that every element
$M\in\mathcal{Q}$ such that $\mathrm{rd}\left(M\right)=1$ is generated
by elementary generators.

For an integer $N\ge1$, suppose that the set $\left\{ \overline{g}\left[r\right]\right\} _{r\in\mathbb{Q}}$
generates every element $M_{0}\in\mathcal{Q}$ such that $\mathrm{rd}\left(M_{0}\right)\le N$,
and suppose $M\in\mathcal{Q}$ such that $\mathrm{rd}\left(M\right)=N+1$.
Choose a representative $B$ of $M$ of the form
\begin{align*}
B & =\left(\begin{array}{cc}
g_{1} & g_{2}\\
-\Phi\overline{g_{2}} & \overline{g_{1}}
\end{array}\right).
\end{align*}

We assume that $B$ is balanced by replacing $B$ with the balanced
companion by Lemma 4.14. Suppose $B$ is upper-balanced. We write
its entries as
\begin{align*}
 & g_{1}=t^{m_{2}}+a_{1}t^{m_{2}+1}+\cdots+a_{n_{2}-m_{2}+1}t^{n_{2}+1},\\
 & g_{2}=b_{0}t^{m_{2}}+b_{1}t^{m_{2}+1}+\cdots+b_{n_{2}-m_{2}}t^{n_{2}}.
\end{align*}

\noindent where we normalized the first coefficient of $g_{1}$ as
1. We expand then
\begin{align*}
 & \,Bg\left[r\right]^{-1}\\
= & \,B\left(\begin{array}{cc}
t^{-1}-r^{2} & -r\\
r\left(1+t+t^{-1}\right) & t-r^{2}
\end{array}\right)\left(\det g\left[r\right]\right)^{-1}\\
= & \,\left(\begin{array}{cc}
\left(1+b_{0}r\right)t^{m_{2}-1}+\cdots+\left(-a_{n_{2}-m_{2}+1}r^{2}+b_{n_{2}-m_{2}}r\right)t^{n_{2}+1} & *\\
\left(-r-b_{0}r^{2}\right)t^{m_{2}}+\cdots+\left(-a_{n_{2}-m_{2}+1}r+b_{n_{2}-m_{2}}\right)t^{n_{2}+1} & *
\end{array}\right)^{T}\left(\det g\left[r\right]\right)^{-1}.
\end{align*}

If we choose $r=-b_{0}^{-1}$, the lowest degree coefficients above
vanish. Moreover, from the functional equation (27) for $B$, we have

\begin{equation}
a_{n_{2}-m_{2}+1}=-b_{0}b_{n_{2}-m_{2}}.
\end{equation}

By (39) and the choice $r=-b_{0}^{-1}$, the degree $t^{n_{2}+1}$
coefficients of $Bg\left[r\right]^{-1}$ above also vanish. It means
$\mathrm{rd}\left(M\overline{g}\left[r\right]^{-1}\right)\le N-1$,
which is generated by elementary generators by the assumption.

On the other hand, suppose $B$ is lower-balanced. we write its entries
as
\begin{align*}
 & g_{1}=t^{m_{2}-1}+a_{1}t^{m_{2}+1}+\cdots+a_{n_{2}-m_{2}+1}t^{n_{2}},\\
 & g_{2}=b_{0}t^{m_{2}}+b_{1}t^{m_{2}+1}+\cdots+b_{n_{2}-m_{2}}t^{n_{2}}.
\end{align*}

Then, we expand
\begin{align*}
 & \,Bg\left[r\right]\\
= & \,B\left(\begin{array}{cc}
t-r^{2} & r\\
-r\left(1+t+t^{-1}\right) & t^{-1}-r^{2}
\end{array}\right)\\
= & \,\left(\begin{array}{cc}
\left(-r^{2}-b_{0}r\right)t^{m_{2}-1}+\cdots+\left(a_{n_{2}-m_{2}+1}-b_{n_{2}-m_{2}}r\right)t^{n_{2}+1} & *\\
\left(r+b_{0}\right)t^{m_{2}-1}+\cdots+\left(a_{n_{2}-m_{2}+1}r-b_{n_{2}-m_{2}}r^{2}\right)t^{n_{2}} & *
\end{array}\right)^{T},
\end{align*}

\noindent where the lowest and the highest degree coefficients of
$Bg\left[r\right]$ vanish like in the upper-balanced case if we choose
$r=-b_{0}$. $\qed$

\noindent \end{proofofa}
\begin{proofofb}

\noindent We use the usual ping-pong lemma. Consider the action of
$\mathcal{Q}$ on itself by the right group multiplication. Choose
the ping-pong sets $X\left(r\right)\subset\mathcal{Q}$ for each rational
$r$ such that
\begin{description}
\item [{(1)}] The set $X\left(0\right)$ is the set of all unbalanced elements
in $\mathcal{Q}$,
\item [{(2)}] For each nonzero rational $r$, $X\left(r\right)$ is the
set of all type $r$ elements in $\mathcal{Q}$.
\end{description}
\noindent \begin{claim2}

For a rational $r$ and an integer $m>0$, $\overline{g}\left[r\right]^{m}$
has a representative
\begin{align*}
B_{r,\,m} & =\left(\begin{array}{cc}
g_{r,\,m,\,1} & g_{r,\,m,\,2}\\
-\Phi\overline{g_{r,\,m,\,2}} & \overline{g_{r,\,m,\,1}}
\end{array}\right)
\end{align*}
such that
\begin{align*}
 & g_{r,\,m,\,1}=-r^{2}t^{-m+1}+\cdots+t^{m},\\
 & g_{r,\,m,\,2}=rt^{-m+1}+\cdots+rt^{m-1},
\end{align*}

\noindent and $\overline{g}\left[r\right]^{-m}$ has a representative
$B_{r,\,-m}=\left(\begin{array}{cc}
g_{r,\,-m,\,1} & g_{r,\,-m,\,2}\\
-\Phi\overline{g_{r,\,-m,\,2}} & \overline{g_{r,\,-m,\,1}}
\end{array}\right)$ such that
\begin{align*}
 & g_{r,\,-m,\,1}=t^{-m}+\cdots-r^{2}t^{m-1},\\
 & g_{r,\,-m,\,2}=-rt^{-m+1}+\cdots-rt^{m-1}.
\end{align*}

\noindent \end{claim2}
\begin{proofclaim2}

\noindent Temporarily denote by $d$ the determinant $\det g\left[r\right]=1+r^{2}+r^{4}$
in this proof. We use the usual method of finding matrix powers. For
every nonnegative integer $n$, define a sequence of polynomials $\chi_{n}\left(x\right)\in\mathbb{Q}\left[x\right]$
inductively by $\chi_{0}\left(x\right):=0$, $\chi_{1}\left(x\right):=$1
and
\begin{align}
\chi_{n+1}\left(x\right) & :=x\chi_{n}\left(x\right)-d\chi_{n-1}\left(x\right),\;n\ge1.
\end{align}

Then, for each integer $m\ge1$, we have a power expression
\begin{align}
g\left[r\right]^{m} & =\left(\begin{array}{cc}
\left(t-r^{2}\right)\chi_{m}\left(x_{0}\right)-d\chi_{m-1}\left(x_{0}\right) & r\chi_{m}\left(x_{0}\right)\\
-r\chi_{m}\left(x_{0}\right)\Phi & \left(t^{-1}-r^{2}\right)\chi_{m}\left(x_{0}\right)-d\chi_{m-1}\left(x_{0}\right)
\end{array}\right),
\end{align}
where $x_{0}=t^{-1}+t-2r^{2}$. One can easily check (41) by induction;
by (40), the polynomial $\chi_{m}\left(x\right)$ has degree $m-1$
for $m\ge1$. From this fact and (41), the lowest degree coefficient
of $\left(g\left[r\right]^{m}\right)_{11}$ (resp. $\left(g\left[r\right]^{m}\right)_{12}$)
is $-r^{2}$ (resp. $r$). The highest degree coefficient of $\left(g\left[r\right]^{m}\right)_{11}$
(resp. $\left(g\left[r\right]^{m}\right)_{12}$) is $1$ (resp. $r$).

The case $\overline{g}\left[r\right]^{-m}$ is done in the same manner,
by taking the representative $\left(dg\left[r\right]^{-1}\right)^{m}$.
$\qed$

\noindent \end{proofclaim2}

\noindent We return to (b) in the proof of Theorem 4.9. In order to
prove (b) by the ping-pong lemma, for two distinct rationals $r_{1},\,r_{2}$,
a nonzero integer $m$, and an element $M\in X\left(r_{1}\right)$,
it suffices to show that
\begin{align}
\tau\left(M\overline{g}\left[r_{2}\right]^{m}\right) & =r_{2}.
\end{align}

If $r_{1}=0$ or $r_{2}=0$, the result is trivial. Suppose $r_{1}\ne0\ne r_{2}$.
There are four cases regarding the balancedness of $M$ and the sign
of $m$; suppose $M$ is upper-balanced and $m>0$, and take the reprsentatives
$B=\left(\begin{array}{cc}
g_{1} & g_{2}\\
-\Phi\overline{g_{2}} & \overline{g_{1}}
\end{array}\right)$ for $M$ such that
\begin{align*}
 & g_{1}=t^{m_{2}}+a_{1}t^{m_{2}+1}+\cdots+a_{n_{2}-m_{2}+1}t^{n_{2}+1},\\
 & g_{2}=-r_{1}^{-1}t^{m_{2}}+b_{1}t^{m_{2}+1}+\cdots+r_{1}a_{n_{2}-m_{2}+1}t^{n_{2}},
\end{align*}

\noindent where the lowest and the highest degree coefficients in
$g_{1}$ and $g_{2}$ are computed by the given types and the functional
equation (27). By Claim 2, we expand
\begin{align*}
 & \,\left(\begin{array}{c}
\left(BB_{r_{2},\,m}\right)_{11}\\
\left(BB_{r_{2},\,m}\right)_{12}
\end{array}\right)\\
= & \,\left(\begin{array}{c}
g_{1}g_{r_{2},\,m,\,1}-\Phi g_{2}\overline{g_{r_{2},\,m,\,2}}\\
g_{1}g_{r_{2},\,m,\,2}+g_{2}\overline{g_{r_{2},\,m,\,1}}
\end{array}\right)\\
= & \,\left(\begin{array}{c}
r_{2}r_{1}^{-1}t^{m_{2}-m}+\cdots+a_{n_{2}-m_{2}+1}t^{n_{2}+m+1}\\
-r_{1}^{-1}t^{m_{2}-m}+\cdots+r_{2}a_{n_{2}-m_{2}+1}t^{n_{2}+m}
\end{array}\right),
\end{align*}

\noindent where we computed the lowest and the highest degree terms.
Thus, $BB_{r_{2},\,m}$ is upper-balanced and we establish (42). On
the other hand, we also expand
\begin{align*}
 & \,\left(\begin{array}{c}
\left(BB_{r_{2},\,-m}\right)_{11}\\
\left(BB_{r_{2},\,-m}\right)_{12}
\end{array}\right)\\
= & \,\left(\begin{array}{c}
g_{1}g_{r_{2},\,-m,\,1}-\Phi g_{2}\overline{g_{r_{2},\,-m,\,2}}\\
g_{1}g_{r_{2},\,-m,\,2}+g_{2}\overline{g_{r_{2},\,-m,\,1}}
\end{array}\right)\\
= & \,\left(\begin{array}{c}
\left(1-r_{1}^{-1}r_{2}\right)t^{m_{2}-m}+\cdots+\left(r_{1}-r_{2}\right)r_{2}a_{n_{2}-m_{2}+1}t^{n_{2}+m}\\
\left(r_{1}^{-1}r_{2}-1\right)r_{2}t^{m_{2}-m+1}+\cdots+\left(r_{1}-r_{2}\right)a_{n_{2}-m_{2}+1}t^{n_{2}+m}
\end{array}\right).
\end{align*}

From the assumption $r_{1}\ne r_{2}$, we conclude $BB_{r_{2},\,-m}$
is lower-balanced and we establish (42). The cases $M$ is lower-balanced
are treated similarly. $\qed$

\noindent \end{proofofb}
\end{prooftheorem49}
\begin{remark}

One can prove Theorem 4.9 not only using the right group action, but
also using the action on the Bruhat-Tits tree. In fact, the group
$\left\langle \mathcal{Q},\,\left[\begin{array}{cc}
1 & 0\\
0 & -t
\end{array}\right]\right\rangle $ is itself free, and the free generators and the inverses correspond
exactly to the vertices that are one edge away from the lattice of
the identity. However, when one considers generalizations to arbitrary
fields, the proof using the right group action is more useful. For
example, one should require the additional condition that the equation
$x^{2}+1=0$ has no solution over the base field when considering
the Bruhat-Tits tree.

\noindent \end{remark}

\section{Algorithmic Proof of Theorem 1.1}

In Section 4, we proved that the quaternionic group $\mathcal{Q}$
is free and identified the free generators. However, this does not
provide us with any meaningful information directly to find a specific
element in the deviating subgroup $F$. In this secion, we present
an algorithm to find a nontrivial element in $F$ using the elementary
generators of $\mathcal{Q}$, and we find a concrete nontrivial element
to prove Theorem 1.1.

\noindent \begin{definition}

For a matrix $B\in\mathrm{M}\left(2,\,\mathbb{Q}\left[x,x^{-1}\right]\right)$,
a \emph{minimal integral representative} of $B$ is a matrix $B'$
in $\mathrm{M}\left(2,\,\mathbb{Z}\left[x,x^{-1}\right]\right)$ such
that
\begin{description}
\item [{(a)}] there exists an integer $m\in\mathbb{Z}$ with $B'=mB$,
and
\item [{(b)}] for any integer $n>1$, we have $n^{-1}B'\notin\mathrm{M}\left(2,\,\mathbb{Z}\left[x,x^{-1}\right]\right)$.
\end{description}
A minimial integral representative is uniquely determined up to sign;
thus, by identifying $B'$ as an equivalence class defined by $B'\sim-B'$,
we can define \emph{the} minimal integral representative $\mathrm{mir}\left(B\right)\in\mathrm{M}\left(2,\,\mathbb{Z}\left[x,x^{-1}\right]\right)/\sim$
for any $B\in\mathrm{M}\left(2,\,\mathbb{Q}\left[x,x^{-1}\right]\right)$.
Define the \emph{minimal integral representative} of $M\in\mathcal{Q}$
to be $\mathrm{mir}\left(B\right)$ for any representative $B$ of
$M$ such that $\det\left(B\right)\in\mathbb{Z}$.

\noindent \end{definition}

We specifically describe how to find the minimal integral representative.
For two integers $a,\,b$ such that $b\ne0$, from (38) we have
\[
\det g\left[\frac{a}{b}\right]=1+\left(\frac{a}{b}\right)^{2}+\left(\frac{a}{b}\right)^{4},
\]
which implies 
\begin{equation}
b^{4}\det g\left[\frac{a}{b}\right]=a^{4}+a^{2}b^{2}+b^{4}.
\end{equation}

\noindent \begin{definition}

The \emph{reductive factor,} $\mathrm{rf}\left(A,\,B\right)$, for
$A,B\in\mathrm{GL}\left(2,\,\mathbb{Q}\left[x,x^{-1}\right]\right)$
is the unique positive integer such that
\begin{align*}
\mathrm{rf}\left(A,\,B\right)\mathrm{mir}\left(AB\right) & =\mathrm{mir}\left(A\right)\mathrm{mir}\left(B\right).
\end{align*}
\end{definition}
\begin{lemma}
\begin{description}
\item [{(a)}] For any pair of integers $a,\,b$ such that $b\ne0$ and
$\gcd\left(a,\,b\right)=1$, we have
\begin{align*}
 & \mathrm{mir}\left(g\left[\frac{a}{b}\right]\right)=\pm b^{2}g\left[\frac{a}{b}\right],\\
 & \mathrm{mir}\left(g\left[\frac{a}{b}\right]^{-1}\right)=\pm b^{2}\det\left(g\left[\frac{a}{b}\right]\right)g\left[\frac{a}{b}\right]^{-1}.
\end{align*}
\item [{(b)}] For any integers $a,\,b,\,c,\,d$ such that $b\ne0\ne d$
and $\gcd\left(a,\,b\right)=1=\gcd\left(c,\,d\right)$, define
\begin{align*}
F_{a,\,b,\,c,\,d} & :=a^{2}c^{2}+abcd+b^{2}d^{2}.
\end{align*}
\end{description}
\noindent 
Then, we have
\begin{align*}
\mathrm{rf}\left(g\left[\frac{a}{b}\right],\,g\left[\frac{c}{d}\right]^{-1}\right) & =\gcd\left(F_{a,\,b,\,c,\,d},\,ad-bc\right).
\end{align*}

\begin{description}
\item [{(c)}] With the same assumption as (b), we have the following identity
\begin{align*}
 & \,\gcd\left(F_{a,\,b,\,c,\,d},\,ad-bc\right)\\
= & \,\gcd\left(b^{4}\det g\left[\frac{a}{b}\right],\,ad-bc\right)\\
= & \,\gcd\left(d^{4}\det g\left[\frac{c}{d}\right],\,ad-bc\right)\\
= & \,\gcd\left(b^{4}\det g\left[\frac{a}{b}\right],\,d^{4}\det g\left[\frac{c}{d}\right],\,ad-bc\right).
\end{align*}
\end{description}
\end{lemma}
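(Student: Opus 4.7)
The plan is to reduce everything to two-by-two matrix computations and elementary number theory, in three stages corresponding to (a), (b), and (c).

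For (a), I clear denominators: multiplying $g[a/b]$ by $b^2$ yields a matrix with integer-coefficient Laurent polynomial entries. Primitivity is automatic from $\gcd(a,b)=1$: any common prime divisor of the resulting coefficients must divide both $ab$ and $b^2$, hence $b$, and then the $(1,1)$-entry $b^2t-a^2$ would force it to divide $a^2$ as well. For the inverse, the identity $\det(g[a/b])\,g[a/b]^{-1}=\mathrm{adj}(g[a/b])$ shows $b^2\det(g[a/b])\,g[a/b]^{-1}=b^2\,\mathrm{adj}(g[a/b])$, a matrix of the same shape as $b^2 g[a/b]$ up to signs, so the same coprimality argument applies.

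For (b), part (a) gives $\mathrm{mir}(g[a/b])\,\mathrm{mir}(g[c/d]^{-1})=\pm\,b^2d^2\,g[a/b]\,\mathrm{adj}(g[c/d])$, and I will multiply this out entry by entry. A direct computation shows that the diagonal entries equal $F_{a,b,c,d}$ plus $(ad-bc)$-multiples of $\{ad,bc\}\cdot t^{\pm 1}$, while the off-diagonal entries are $(ad-bc)$ times polynomials whose coefficients lie in $\{ac,bd,ac+bd\}$ (the anti-diagonal picks up a factor of $\Phi$, contributing the sum $ac+bd$). By the defining property of $\mathrm{rf}$ together with the primitivity of $\mathrm{mir}(AB)$, the reductive factor equals the GCD of all these integer coefficients. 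Using $\gcd(c,d)=1$ we get $\gcd(ac,ad)=a$ and $\gcd(bc,bd)=b$, and then $\gcd(a,b)=1$ yields $\gcd(ac,ad,bc,bd,ac+bd)=1$, so the off-diagonal contribution to the GCD is exactly $(ad-bc)$. Combined with the constant $F_{a,b,c,d}$ appearing on the diagonal, the overall GCD is $\gcd(F_{a,b,c,d},ad-bc)$.

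For (c), the key arithmetic identity is $b^2F_{a,b,c,d}\equiv d^2(a^4+a^2b^2+b^4)\pmod{ad-bc}$, obtained by substituting $ad\equiv bc$ into $b^2F_{a,b,c,d}=a^2b^2c^2+ab^3cd+b^4d^2$. Let $g=\gcd(F_{a,b,c,d},ad-bc)$; then $g\mid d^2(a^4+a^2b^2+b^4)$. If a prime $p\mid g$ also divides $d$, then $p\mid ad-bc$ forces $p\mid bc$, hence $p\mid b$ by $\gcd(c,d)=1$, and then $p\mid F_{a,b,c,d}$ gives $p\mid a^2c^2$ and so $p\mid a$, contradicting $\gcd(a,b)=1$. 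Hence $p\nmid d$, so $p\mid a^4+a^2b^2+b^4$; the reverse containment follows analogously using $\gcd(a^4+a^2b^2+b^4,b)=\gcd(a^4,b)=1$. The symmetric identity $d^2F_{a,b,c,d}\equiv b^2(c^4+c^2d^2+d^4)\pmod{ad-bc}$ yields the second equality, and the triple-GCD equality is then formal.

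The main obstacle throughout is the bookkeeping in the matrix expansion of (b): one must verify that exactly the six coefficient types $F_{a,b,c,d}$, $ad(ad-bc)$, $bc(ad-bc)$, $ac(ad-bc)$, $bd(ad-bc)$, $(ac+bd)(ad-bc)$ appear and are placed at the correct entry and power of $t$. Once this is secured, the coprimality arguments in (b) and (c) are entirely elementary.
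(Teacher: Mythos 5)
Your proof is correct and follows essentially the same route as the paper: part (a) is the same primitivity observation, part (b) is the same direct expansion of $\mathrm{mir}(g[a/b])\,\mathrm{mir}(g[c/d]^{-1})$ with the GCD of its coefficients reduced to $\gcd(F_{a,b,c,d},\,ad-bc)$ via $\gcd(ac,ad,bc,bd)=1$, and part (c) is the same congruence-plus-coprimality argument (you use $b^2F_{a,b,c,d}\equiv d^2(a^4+a^2b^2+b^4)\pmod{ad-bc}$ where the paper uses $a^2F_{a,b,c,d}\equiv c^2(a^4+a^2b^2+b^4)$, but these are interchangeable). One small imprecision: in (b) you call the GCD reduction the ``off-diagonal contribution,'' but the needed coefficients $ad$ and $bc$ actually come from the diagonal entries — the off-diagonal coefficients $\{ac,bd,ac+bd\}$ alone only give $\gcd(ac,bd)$, which can exceed $1$; the conclusion is nevertheless correct once all six coefficient types are combined.
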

\begin{proof}

\noindent (a) It is straightforward from (38) and the assumption $\gcd\left(a,\,b\right)=1$.

\noindent (b) Temporarily put $m$ as $ad-bc$ in this proof. Then,
we claim
\begin{align}
\mathrm{rf}\left(g\left[\frac{a}{b}\right],\,g\left[\frac{c}{d}\right]^{-1}\right) & =\gcd\left(adm,\,bcm,\,acm,\,bdm,\,F_{a,\,b,\,c,\,d}\right).
\end{align}
From (38) and (a), we expand
\begin{align*}
 & \,\left(b^{2}g\left[\frac{a}{b}\right]\right)\left(d^{2}\det\left(g\left[\frac{c}{d}\right]\right)g\left[\frac{c}{d}\right]^{-1}\right)\\
= & \,\left(\begin{array}{cc}
b^{2}t-a^{2} & ab\\
-\left(1+t^{-1}+t\right)ab & b^{2}t^{-1}-a^{2}
\end{array}\right)\left(\begin{array}{cc}
d^{2}t^{-1}-c^{2} & -cd\\
\left(1+t^{-1}+t\right)cd & d^{2}t-c^{2}
\end{array}\right)\\
= & \,\left(\begin{array}{cc}
\left(bc-ad\right)adt^{-1}+a^{2}c^{2}+b^{2}d^{2}+abcd+\left(ad-bc\right)bct & *\\
\left(ad-bc\right)ac+\left(ad-bc\right)bdt & *
\end{array}\right)^{T},
\end{align*}
where we have (44) by comparing the terms. We also claim
\begin{align}
m & =\gcd\left(adm,\,bcm,\,acm,\,bdm\right).
\end{align}

By simplifying the right-hand side of (45) repetitively, we prove
(45) as

\noindent 
\begin{align*}
 & \,\gcd\left(adm,\,bcm,\,acm,\,bdm\right)\\
= & \,m\gcd\left(ad,\,bc,\,ac,\,bd\right)\\
= & \,m\gcd\left(a,\,bc,\,bd\right)\\
= & \,m\gcd\left(a,\,b\right)\\
= & \,m.
\end{align*}

By (45), the equality (44) is reduced to
\begin{align*}
\mathrm{rf}\left(g\left[\frac{a}{b}\right],\,g\left[\frac{c}{d}\right]^{-1}\right) & =\gcd\left(m,\,F_{a,\,b,\,c,\,d}\right),
\end{align*}
which establishes (b).

\noindent (c) We use the convention on $m=ad-bc$. A direct computation
using (43) and the definition of $F_{a,\,b,\,c,\,d}$ yields

\noindent 
\begin{align}
 & a^{2}F_{a,\,b,\,c,\,d}\equiv c^{2}\left(b^{4}\det g\left[\frac{a}{b}\right]\right)\;(\mathrm{mod}\;m),\\
 & c^{2}F_{a,\,b,\,c,\,d}\equiv a^{2}\left(d^{4}\det g\left[\frac{c}{d}\right]\right)\;(\mathrm{mod}\;m).
\end{align}

Since $b^{4}\det g\left[\frac{a}{b}\right]$ (resp. $d^{4}\det g\left[\frac{c}{d}\right]$)
and $a^{2}$ (resp. $c^{2}$) has no common factor, from (46) and
(47), we have

\noindent 
\begin{align}
 & \gcd\left(m,\,b^{4}\det g\left[\frac{a}{b}\right]\right)\;|\;F_{a,\,b,\,c,\,d},\\
 & \gcd\left(m,\,d^{4}\det g\left[\frac{c}{d}\right]\right)\;|\;F_{a,\,b,\,c,\,d}.
\end{align}

On the other hand, since $\gcd\left(m,\,F_{a,\,b,\,c,\,d}\right)$
and $c$ (resp. $a$) has no common factor, from (46) (resp. (47)),
we also have

\noindent 
\begin{align}
 & \gcd\left(m,\,F_{a,\,b,\,c,\,d}\right)\;|\;b^{4}\det g\left[\frac{a}{b}\right],\\
 & \gcd\left(m,\,F_{a,\,b,\,c,\,d}\right)\;|\;d^{4}\det g\left[\frac{c}{d}\right].
\end{align}

We now have $\gcd\left(m,\,F_{a,\,b,\,c,\,d}\right)=\gcd\left(m,\,b^{4}\det g\left[\frac{a}{b}\right]\right)$,
by deducing from (48) and (50) that
\begin{align*}
 & \,\gcd\left(m,\,F_{a,\,b,\,c,\,d}\right)\\
= & \,\gcd\left(m,\,F_{a,\,b,\,c,\,d},\,\gcd\left(m,\,b^{4}\det g\left[\frac{a}{b}\right]\right)\right)\\
= & \,\gcd\left(m,\,F_{a,\,b,\,c,\,d},\,b^{4}\det g\left[\frac{a}{b}\right]\right)\\
= & \,\gcd\left(m,\,b^{4}\det g\left[\frac{a}{b}\right],\,\gcd\left(m,\,F_{a,\,b,\,c,\,d}\right)\right)\\
= & \,\gcd\left(m,\,b^{4}\det g\left[\frac{a}{b}\right]\right).
\end{align*}

From (49) and (51), we have $\gcd\left(m,\,F_{a,\,b,\,c,\,d}\right)=\gcd\left(m,\,d^{4}\det g\left[\frac{c}{d}\right]\right)$
in the same manner, which establishes (c). $\qedhere$

\noindent \end{proof}

\noindent \begin{definition}

Define a free reduced word $C$ in the set of elementary generators
$\left\{ \overline{g}\left[r\right]\right\} _{r\in\mathbb{Q}}$ to
be \emph{alternating} if each pair of adjacent symbols is of form
$\overline{g}\left[r_{1}\right]\overline{g}\left[r_{2}\right]^{-1}$
or $\overline{g}\left[r_{1}\right]^{-1}\overline{g}\left[r_{2}\right]$.
For a pair of positive integers $k,\:l$, we define a $\left(k,\,l\right)$-\emph{chain}
to be an alternating word in $\left\{ \overline{g}\left[r\right]\right\} _{r\in\mathbb{Q}}$,
inductively as follows.
\begin{description}
\item [{(a)}] Any length 1 element $\overline{g}\left[\frac{a}{b}\right]$
or $\overline{g}\left[\frac{a}{b}\right]^{-1}$ is a $\left(k,\,l\right)$-chain
for any positive integers $k,\:l$ such that $kl=b^{4}\det\left(g\left[\frac{a}{b}\right]\right)=a^{4}+a^{2}b^{2}+b^{4}$.
\item [{(b)}] Suppose $C=\overline{g}\left[\frac{a_{1}}{b_{1}}\right]\cdots\overline{g}\left[\frac{a_{n}}{b_{n}}\right]$
is a $\left(k,\,l\right)$-chain. Then, $C\overline{g}\left[\frac{a_{n+1}}{b_{n+1}}\right]^{-1}$
is a $\left(k',\,l'\right)$-chain if
\end{description}
\begin{itemize}
\item $k=k'$,
\item $l\;|\;\mathrm{rf}\left(g\left[\frac{a_{n}}{b_{n}}\right],\,g\left[\frac{a_{n+1}}{b_{n+1}}\right]^{-1}\right)$,
\item $ll'=b_{n+1}^{4}\det\left(g\left[\frac{a_{n+1}}{b_{n+1}}\right]\right)$,
and
\item $k'l'=\det\left(\mathrm{mir}\left(C\overline{g}\left[\frac{a_{n+1}}{b_{n+1}}\right]^{-1}\right)\right)$,
\end{itemize}
and the other alternating cases are also defined in the same manner.
Define the \emph{length} of a chian $C$ to be the word length of
$C$, and define a \emph{biminimal chain} to be a $\left(1,\,1\right)$-chain.

\noindent \end{definition}

\noindent \begin{conjecture}

Let $C$ be a $\left(1,\,l\right)$-chain of length $n$ such that
\begin{align*}
(n\,\mathrm{odd}) & \;C=\overline{g}\left[\frac{a_{1}}{b_{1}}\right]\cdots\overline{g}\left[\frac{a_{n}}{b_{n}}\right],\;\mathrm{or}\\
(n\;\mathrm{even}) & \;C=\overline{g}\left[\frac{a_{1}}{b_{1}}\right]\cdots\overline{g}\left[\frac{a_{n}}{b_{n}}\right]^{-1}.
\end{align*}

Suppose a pair of positive integers $c,\,d$ satisfies that $a_{n}d\ne b_{n}c$
and
\begin{align*}
l\;|\;\mathrm{rf}\left(g\left[\frac{a_{n}}{b_{n}}\right],\,g\left[\frac{c}{d}\right]^{-1}\right).
\end{align*}

Then, when $n$ is odd (resp. $n$ is even), $C\overline{g}\left[\frac{c}{d}\right]^{-1}$
(resp. $C\overline{g}\left[\frac{c}{d}\right]$) is a $\left(1,\,l^{-1}d^{4}\det g\left[\frac{c}{d}\right]\right)$-chain.

\noindent \end{conjecture}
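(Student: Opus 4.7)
The plan is to verify each clause of Definition 5.4(b) for $C \overline{g}[c/d]^{\mp 1}$ to qualify as a $(1, l')$-chain, where we set $l' := l^{-1} d^{4} \det g[c/d]$. First, $l'$ is a positive integer: by Lemma 5.3(c), $\mathrm{rf}(g[a_n/b_n], g[c/d]^{-1})$ divides $d^{4} \det g[c/d]$, so the hypothesis $l \mid \mathrm{rf}(g[a_n/b_n], g[c/d]^{-1})$ implies $l \mid d^{4} \det g[c/d]$. The alternation condition follows from the parity constraint on $n$; the divisibility clause is precisely the hypothesis; and the product relation $l l' = d^{4} \det g[c/d]$ holds by construction.

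The only nontrivial condition is the determinant equation $1 \cdot l' = \det \mathrm{mir}\bigl(C \overline{g}[c/d]^{\mp 1}\bigr)$. Taking determinants of the defining identity $\mathrm{rf}(\mathrm{mir}(C), g[c/d]^{\mp 1}) \cdot \mathrm{mir}(C \overline{g}[c/d]^{\mp 1}) = \mathrm{mir}(C)\,\mathrm{mir}(g[c/d]^{\mp 1})$ yields
\[
\det \mathrm{mir}\bigl(C \overline{g}[c/d]^{\mp 1}\bigr) \;=\; \frac{l \cdot d^{4} \det g[c/d]}{\mathrm{rf}(\mathrm{mir}(C), g[c/d]^{\mp 1})^{2}},
\]
where $\det \mathrm{mir}(C) = l$ by the $(1,l)$-chain hypothesis and $\det \mathrm{mir}(g[c/d]^{\mp 1}) = d^{4} \det g[c/d]$ by Lemma 5.3(a). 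The right-hand side equals $l'$ if and only if $\mathrm{rf}(\mathrm{mir}(C), g[c/d]^{\mp 1}) = l$.

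Thus everything reduces to the identity $\mathrm{rf}(\mathrm{mir}(C), g[c/d]^{\mp 1}) = l$, which I would establish by proving the two divisibilities separately. For $l \mid \mathrm{rf}$, I would argue by induction on the chain length $n$: factor $\mathrm{mir}(C)$ through its penultimate segment and its last elementary generator, and use the hypothesis $l \mid \mathrm{rf}(g[a_n/b_n], g[c/d]^{-1})$ to control the common factor produced by the multiplication of the last two elementary generators in the combined word; the role-swap between $g[c/d]$ and $g[c/d]^{-1}$ in the even-$n$ case is absorbed by the symmetric gcd formula of Lemma 5.3(c). For the reverse $\mathrm{rf} \mid l$, I would use the minimality of $\mathrm{mir}(C \overline{g}[c/d]^{\mp 1})$ together with a detailed analysis of the extremal-degree coefficients of the first-row entries of $\mathrm{mir}(C)\,\mathrm{mir}(g[c/d]^{\mp 1})$, in the spirit of the balanced-companion analysis in the proof of Theorem 4.9.

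The main obstacle will be establishing $\mathrm{rf} \mid l$, since this requires tracking integral cancellations accumulated globally along the entire chain, rather than only at the terminal step; one must verify that no spurious common factor beyond $l$ emerges in $\mathrm{mir}(C)\,\mathrm{mir}(g[c/d]^{\mp 1})$. The companion inclusion $l \mid \mathrm{rf}$ is the more immediate consequence of the inductive chain structure, propagated from the last elementary generator of $C$ via Lemma 5.3.
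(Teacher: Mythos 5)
You should first note that the paper does not prove this statement at all: it is stated as Conjecture 5.5, and the authors explicitly remark that ``the proof may require deeper results, for example, on long-distance reductions.'' So there is no proof in the paper to compare against, and the real question is whether your proposal closes the gap. It does not. Your reduction of the conjecture to the single identity $\mathrm{rf}\left(\mathrm{mir}\left(C\right),\,g\left[\frac{c}{d}\right]^{\mp1}\right)=l$ is correct and clean (the determinant bookkeeping via $\mathrm{rf}(A,B)^{2}\det\mathrm{mir}(AB)=\det\mathrm{mir}(A)\det\mathrm{mir}(B)$, $\det\mathrm{mir}(C)=kl=l$, and $\det\mathrm{mir}\left(g\left[\frac{c}{d}\right]^{\mp1}\right)=d^{4}\det g\left[\frac{c}{d}\right]$ all check out), but that identity is precisely the content of the conjecture, and your treatment of it consists of statements about what you ``would'' do rather than an argument.

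Concretely, both divisibilities are unproven, not just the one you flag as hard. For $l\mid\mathrm{rf}\left(\mathrm{mir}\left(C\right),\,g\left[\frac{c}{d}\right]^{\mp1}\right)$: the hypothesis only controls $\mathrm{rf}\left(g\left[\frac{a_{n}}{b_{n}}\right],\,g\left[\frac{c}{d}\right]^{-1}\right)$, and there is no lemma in the paper (nor an obvious one) relating $\mathrm{rf}(AB',B)$ to $\mathrm{rf}(B',B)$; the common integer factor extracted from the product $\mathrm{mir}(C)\,\mathrm{mir}\left(g\left[\frac{c}{d}\right]^{\mp1}\right)$ depends on all entries of $\mathrm{mir}(C)$, not only on its last letter, so your proposed induction needs an actual invariant that propagates through the whole word, which you have not supplied. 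For the converse $\mathrm{rf}\mid l$, you yourself identify the obstacle --- ruling out spurious cancellation accumulated along the entire chain --- and this is exactly the ``long-distance reduction'' phenomenon the authors cite as the reason the statement remains conjectural. An appeal to ``the spirit of the balanced-companion analysis in the proof of Theorem 4.9'' does not suffice: that analysis controls leading and trailing coefficients over $\mathbb{Q}$, whereas here one must control integral gcd's of all coefficients. As written, the proposal is a correct reformulation of the conjecture plus a research plan, not a proof.
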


Conjecture 5.5 plays a role in finding a $\left(1,\,l'\right)$-chain
to elongate another $\left(1,\,l\right)$-chain by calculating only
on the 'last' generator in the latter. This conjecture seems true,
but the proof may require deeper results, for example, on long-distance
reductions.

For any \emph{nontrivial} (i.e. not just one of $\overline{g}\left[0\right]^{\pm1}$)
biminimal chain, there exists a maximal one among the reductive factors
of two adjacent symbols. More concretely, let
\begin{align*}
C=\overline{g}\left[\frac{a_{1}}{b_{1}}\right]\overline{g}\left[\frac{a_{2}}{b_{2}}\right]^{-1}\cdots\overline{g}\left[\frac{a_{n+1}}{b_{n+1}}\right]^{-1}
\end{align*}
be a biminimal chain, and define $\mathrm{rf}_{i}$ to be $\mathrm{rf}\left(g\left[\frac{a_{i}}{b_{i}}\right],\,g\left[\frac{a_{i+1}}{b_{i+1}}\right]^{-1}\right)$.
Then, we define the \emph{maximal reductive factor} $\mathrm{Mrf}\left(C\right):=\max\left\{ \mathrm{rf}_{i}\right\} _{1\le i\le n}$.
Suppose $\mathrm{Mrf}\left(C\right)=\mathrm{rf}_{k}$ for some integer
$k$. Then, by the definition, we have
\begin{align*}
\mathrm{Mrf}\left(C\right)^{2} & \ge\max\left(b_{k}^{4}\det g\left[\frac{a_{k}}{b_{k}}\right],\,b_{k+1}^{4}\det g\left[\frac{a_{k+1}}{b_{k+1}}\right]\right).
\end{align*}

The following algorithm begins by searching for candidates for maximal
reductive factors.

\noindent \begin{algorithm}

For a positive integer $M$ as input, we generate a tree with nodes
of the following type:
\begin{align*}
\left(a,\,b,\,\mathrm{rf},\,\mathrm{index}\right),
\end{align*}
where $a,\,b,\,\mathrm{rf},\,\mathrm{index}$ are integers such that
$\mathrm{rf}\ge0$.
\begin{description}
\item [{(a)}] Insert a node $\left(0,\,0,\,0,\,0\right)$. It is the single
root node of the whole tree. Proceed to (b).
\item [{(b)}] Choose a pair of integers $\left(a,\,b,\,c,\,d\right)$ iteratively
such that $0<b,\,d\le M$, $0<\left|a\right|,\,\left|c\right|\le M$,
$b=\max\left(\left|a\right|,\,b,\,\left|c\right|,\,d\right)$, $\gcd\left(a,\,b\right)=1=\gcd\left(c,\,d\right)$,
and $\left(a,\,b\right)\ne\left(c,\,d\right)$. Then, proceed to (c).
If there is no combination $\left(a,\,b,\,c,\,d\right)$ remaining,
terminate the algorithm.
\item [{(c)}] Calculate the value $\mathrm{rf}\left(g\left[\frac{a}{b}\right],\,g\left[\frac{c}{d}\right]^{-1}\right)$
by Lemma 5.3 (b) and (c), and define an integer
\begin{align*}
\mathrm{rf}_{0} & :=\mathrm{rf}\left(g\left[\frac{a}{b}\right],\,g\left[\frac{c}{d}\right]^{-1}\right).
\end{align*}
Then, proceed to (d).
\item [{(d)}] Test if
\begin{align}
\mathrm{rf}_{0}^{2} & \ge\max\left(b^{4}\det g\left[\frac{a}{b}\right],\,d^{4}\det g\left[\frac{c}{d}\right]\right).
\end{align}
\item [{(d-1)}] If the condition (52) is not satisfied, then return; we
go back to (b) where we choose the next combination $\left(a,\,b,\,c,\,d\right)$.
\item [{(d-2)}] If the condition (52) is satisfied, insert two nodes
\begin{align*}
 & \left(a,\,b,\,\mathrm{rf}_{0},\,i\right),\\
 & \left(c,\,d,\,\mathrm{rf}_{0},\,-i\right),
\end{align*}
as children nodes of the root $\left(0,\,0,\,0,\,0\right)$, where
$i$ is a positive integer added by 1 to the largest value among the
indices of existing children of the root. If there was no child of
the root, take $i=1$. Then, test if $\mathrm{rf}_{0}=b^{4}\det g\left[\frac{a}{b}\right]$
for the node $\left(a,\,b,\,\mathrm{rf}_{0},\,i\right)$.
\item [{(d-2-1)}] If the test returns false, proceed to (e) for the tested
node $\left(a,\,b,\,\mathrm{rf}_{0},\,i\right)$ as input. After finishing
the procedure (e), test if there still exists the child node $\left(a,\,b,\,\mathrm{rf}_{0},\,i\right)$
of the root. If the test returns false, then delete the node $\left(c,\,d,\,\mathrm{rf}_{0},\,-i\right)$
and return; we go back to (b). If the test returns true, test if $\mathrm{rf}_{0}=d^{4}\det g\left[\frac{c}{d}\right]$.
\item [{(d-2-1-1)}] If the test returns true, then return; we go back to
(b).
\item [{(d-2-1-2)}] If the test returns false, proceed to (e) for the tested
node $\left(c,\,d,\,\mathrm{rf}_{0},\,-i\right)$ as input. After
finishing the procedure (e), test if there still exists the child
node $\left(c,\,d,\,\mathrm{rf}_{0},\,-i\right)$ of the root. If
the test returns true, then return; we go back to (b). If the test
returns false, then delete the node $\left(a,\,b,\,\mathrm{rf}_{0},\,i\right)$
and return; we go back to (b).
\item [{(d-2-2)}] If the test returns true, test if $\mathrm{rf}_{0}=d^{4}\det g\left[\frac{c}{d}\right]$.
\item [{(d-2-2-1)}] If the test returns true, then return; we go back to
(b).
\item [{(d-2-2-2)}] If the test returns false, proceed to (e) for the tested
node $\left(c,\,d,\,\mathrm{rf}_{0},\,-i\right)$ as input. After
finishing the procedure (e), test if there still exists the child
node $\left(c,\,d,\,\mathrm{rf}_{0},\,-i\right)$ of the root. If
the test returns true, then return; we go back to (b). If the test
returns false, then delete the node $\left(a,\,b,\,\mathrm{rf}_{0},\,i\right)$
and return; we go back to (b).
\item [{(e)}] For the input node $\left(e,\,f,\,\mathrm{rf},\,\mathrm{index}\right)$,
choose a pair of integers $\left(g,\,h\right)$ iteratively such that
$0<\left|g\right|\le M$, $0<h\le M$ such that $\gcd\left(g,\,h\right)=1$
and $\left(e,\,f\right)\ne\left(g,\,h\right)$, then proceed to (f).
If there is no combination $\left(g,\,h\right)$ remaining, test if
there is a child node of the input $\left(e,\,f,\,\mathrm{rf},\,\mathrm{index}\right)$.
\item [{(e-1)}] If the test returns true, then return; if the parent node
of $\left(e,\,f,\,\mathrm{rf},\,\mathrm{index}\right)$ is a node
with a nonzero index, we go back to (e) for the parent node of the
input $\left(e,\,f,\,\mathrm{rf},\,\mathrm{index}\right)$, where
we choose the next combination $\left(g,\,h\right)$. If the parent
node is the root node, we go back to (d-2-1-2) or (d-2-2-2).
\item [{(e-2)}] If the test returns false, delete the input node $\left(e,\,f,\,\mathrm{rf},\,\mathrm{index}\right)$,
and return; if the parent node of $\left(e,\,f,\,\mathrm{rf},\,\mathrm{index}\right)$
is a node with a nonzero index, we go back to (e) for the parent node
of $\left(e,\,f,\,\mathrm{rf},\,\mathrm{index}\right)$. If the parent
node is the root node, we go back to (d-2-1-2) or (d-2-2-2).
\item [{(f)}] By Lemma 5.3 (b), (c), calculate the values $\mathrm{rf}\left(g\left[\frac{e}{f}\right],\,g\left[\frac{g}{h}\right]^{-1}\right)$
and $\mathrm{rf}^{-1}f^{4}\det g\left[\frac{e}{f}\right]$. Define
integers
\begin{align*}
 & \mathrm{rf}_{near}:=\mathrm{rf}\left(g\left[\frac{e}{f}\right],\,g\left[\frac{g}{h}\right]^{-1}\right),\\
 & \mathrm{rf}_{child}:=\mathrm{rf}^{-1}f^{4}\det g\left[\frac{e}{f}\right].
\end{align*}
Then, proceed to (g).
\item [{(g)}] Test the following three conditions:
\begin{itemize}
\item $\mathrm{rf}_{child}\;|\;\mathrm{rf}_{near}$,
\item $\mathrm{rf}_{child}<\mathrm{rf}_{0}$, where $\mathrm{rf}_{0}$ is
the $\mathrm{rf}$ of the ancestor node whose parent is the root,
and
\item $\mathrm{rf}_{child}\ne\mathrm{rf}_{anc}$ for any $\mathrm{rf}_{anc}$
which is $\mathrm{rf}$ of an ancestor.
\end{itemize}
\item [{(g-1)}] If at least one condition is not satisfied, then return;
we go back to (e) for the original node $\left(e,\,f,\,\mathrm{rf},\,\mathrm{index}\right)$.
\item [{(g-2)}] If the three conditions are satisfied, insert a new node
\begin{align*}
\left(g,\,h,\,\mathrm{rf}_{child},\,\mathrm{index}\right)
\end{align*}
as a child of $\left(e,\,f,\,\mathrm{rf},\,\mathrm{index}\right)$.
Test if $h^{4}\det g\left[\frac{g}{h}\right]=\mathrm{rf}_{near}$.
If the test returns true, then return; we go back to (e) for the original
node $\left(e,\,f,\,\mathrm{rf},\,\mathrm{index}\right)$. If the
test returns false, then here start a new search (e) for the child
$\left(g,\,h,\,\mathrm{rf}_{child},\,\mathrm{index}\right)$, and
after finishing the new procedure (e) for the child, return; we go
back to (e) for the original node $\left(e,\,f,\,\mathrm{rf},\,\mathrm{index}\right)$.
\end{description}
\noindent \end{algorithm}

In Algorithm 5.6, the input value $M$ determines the maximum number
of iterations allowed, given in steps (b) and (e). At first, we observe
that Algorithm 5.6 will terminate in finite time, due to the condition
$\mathrm{rf}_{child}\ne\mathrm{rf}_{anc}$ for any rf of an ancestor
in step (g). If this condition is omitted, the algorithm may potentially
loop endlessly. After the algorithm finishes, we obtain candidates
for nontrivial biminimal chains in the form of a tree. If Conjecture
5.5 is true, these candidates are indeed biminimal chains. The condition
$\mathrm{rf}_{child}<\mathrm{rf}_{0}$ in step (g) ensures that the
factor $\mathrm{rf}_{0}$ found satisfying (52) is indeed the maximal
reductive factor in the chain. If this condition is omitted, we will
only find \emph{locally} maximal factors, and the algorithm may produce
the same two chains with different indices, resulting in inefficiency.

\noindent \begin{prooftheorem11}

\noindent By using Algorithm 5.6 with the Rust Programming Language,
when we used an input $M=50$, we found an element $\left[\begin{array}{cc}
g_{1} & g_{2}\\
-\Phi\overline{g_{2}} & \overline{g_{1}}
\end{array}\right]\in\mathrm{PGL}\left(2,\,\mathbb{Q}\left[t,\,t^{-1}\right]\right)$ as a word of elementary generators
\begin{align*}
 & \overline{g}\left[-1\right]\overline{g}\left[2\right]^{-1}\overline{g}\left[-\frac{1}{3}\right]\overline{g}\left[4\right]^{-1}\,\cdots\,\overline{g}\left[-\frac{1}{9}\right]\overline{g}\left[-\frac{11}{8}\right]^{-1}\overline{g}\left[\frac{13}{17}\right]\overline{g}\left[-\frac{18}{19}\right]^{-1}\\
 & \overline{g}\left[\frac{25}{9}\right]\overline{g}\left[\frac{8}{31}\right]^{-1}\overline{g}\left[-\frac{41}{17}\right]\overline{g}\left[\frac{31}{46}\right]^{-1}\overline{g}\left[-\frac{38}{43}\right]\overline{g}\left[\frac{29}{37}\right]^{-1}\overline{g}\left[-\frac{20}{23}\right]\\
 & \overline{g}\left[-21\right]^{-1}\overline{g}\left[\frac{1}{20}\right]\overline{g}\left[-19\right]^{-1}\,\cdots\,\overline{g}\left[\frac{1}{2}\right]\overline{g}\left[-1\right]^{-1},
\end{align*}
where the relative degree of $g_{1}$ is 40, and the maximal reductive
factor is
\begin{align*}
3081 & =\mathrm{rf}\left(g\left[\frac{31}{46}\right]^{-1},\,g\left[-\frac{38}{43}\right]\right)
\end{align*}
such that
\begin{align*}
3081^{2} & \ge\max\left\{ 46^{4}\det g\left[\frac{31}{46}\right],\,43^{4}\det g\left[-\frac{38}{43}\right]\right\} .
\end{align*}

This element is a candidate of biminimal chain, and one can directly
check that this is in fact a biminimal chain included in $\mathrm{PGL}\left(2,\,\mathbb{Z}\left[t,\,t^{-1}\right]\right)$.
Therefore, the integral subgroup $U$ includes it. Corollary 4.7 implies
that Question 2.2 is false. Moreover, the example above assures that
the deviating subgroup $F$ is nontrivial. By the proof of Theorem
3.17, we also have
\begin{align*}
\left[\overline{\Gamma}\::\:\overline{\beta}\left(B_{3}\right)\right] & =\left|F\right|=\infty,
\end{align*}
which concludes the proof of Theorem 1.1. $\qed$

\noindent \end{prooftheorem11}

When we use Algorithm 5.6 with an input $M=121$, we found 9 distinct
biminimal chains, each of which cannot be reduced to a product of
others. Among these, we confirmed that the smallest relative degree
of representatives is 22. This proves that the free rank of the group
$F$ is at least 9. Therefore, it is natural to pose the following
conjecture.

\noindent \begin{conjecture}

The deviating subgroup $F$ is free of countably infinite rank.

\noindent \end{conjecture}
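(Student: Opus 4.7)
The plan is to establish Conjecture 5.7 by showing the rank of $F$ is infinite. The upper bound is immediate: since $\mathbb{Q}$ is countable, the free group $\mathcal{Q}$ has countable rank, and by Nielsen--Schreier applied to $F\le U\le\mathcal{Q}$, the rank of $F$ (already known to be free by Corollary 4.11) is at most $\aleph_{0}$. Thus it suffices to produce infinitely many elements of $F$ that are freely independent in $\mathcal{Q}$.

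The main step I would carry out is the construction of an explicit family $\{w_{N}\}_{N\in\mathbb{N}}\subset F$, parameterized so that the $w_{N}$ are distinguishable in the free group $\mathcal{Q}$. Concretely, I would fix a sequence of distinct rationals $r_{1},r_{2},\ldots$ (for instance $r_{N}=1/p_{N}$ where $p_{N}$ is the $N$-th prime) and, for each $N$, apply a version of Algorithm 5.6 constrained to search for a biminimal chain in which the letter $\overline{g}[r_{N}]$ appears at the position realizing the maximal reductive factor. The explicit biminimal chain of length $40$ produced in the proof of Theorem 1.1 would serve as the $N=1$ case, and the algorithmic structure suggests the same procedure succeeds for each choice of $r_{N}$ provided the divisibility conditions of Definition 5.4 can be arranged. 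For each such candidate $w_{N}$, membership in $F$ (rather than merely in $U=U^{1}$) is checked by computing $\epsilon(w_{N})\in\mathrm{PSL}(2,\mathbb{Z})$ directly from the word via the matrices in (20) and confirming it is trivial; this is a finite verification on the word. To deduce free independence of the $\{w_{N}\}$, I would pass to the abelianization map
\[
\mathrm{ab}\colon\mathcal{Q}\longrightarrow\mathcal{Q}^{\mathrm{ab}}=\bigoplus_{r\in\mathbb{Q}}\mathbb{Z}\cdot e_{r},
\]
which is well-defined because $\mathcal{Q}$ is free on $\{\overline{g}[r]\}_{r\in\mathbb{Q}}$ by Theorem 4.9. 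If each $w_{N}$ is chosen so that $\overline{g}[r_{N}]$ appears in its reduced form while $\overline{g}[r_{N}]$ does not appear in any $w_{M}$ for $M\ne N$, then $\mathrm{ab}(w_{N})$ has a nonzero component at $e_{r_{N}}$ not shared with $\mathrm{ab}(w_{M})$, so the images are $\mathbb{Z}$-linearly independent in $\mathcal{Q}^{\mathrm{ab}}$. This forces $\{w_{N}\}$ to be freely independent in $\mathcal{Q}$ and \emph{a fortiori} in $F$, giving infinite rank.

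The principal obstacle is making the above construction rigorous without assuming Conjecture 5.5, since the termination and correctness guarantees of Algorithm 5.6 rest on that conjecture. A theoretical route that bypasses this would be to exploit the semidirect product $\overline{\Gamma}\cong F\rtimes\mathrm{PSL}(2,\mathbb{Z})$ from Theorem 3.17: since $F$ is normal in $\overline{\phi}(\mathcal{B})$, the conjugation action of $\mathrm{PSL}(2,\mathbb{Z})$ preserves $F$, and starting from a single nontrivial $w_{0}\in F$ (whose existence is guaranteed by the proof of Theorem 1.1), the conjugates $\mu(s_{1}^{n_{1}}s_{2}^{n_{2}}\cdots)w_{0}\mu(\cdots)^{-1}$ for varying words in $\mathrm{PSL}(2,\mathbb{Z})$ provide an infinite supply of elements of $F$. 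The core difficulty is then a free-group combinatorics problem inside $\mathcal{Q}$: showing that these conjugates cannot all be expressed as words in any finite subset of $\{\overline{g}[r]\}_{r\in\mathbb{Q}}$, which in turn should follow by tracking how the elementary generators $\{\overline{g}[r]\}$ appearing in $w_{0}$ propagate under left/right multiplication by $\mu(s_{1})$ and $\mu(s_{2})$---the latter lying in $\overline{\phi}(\mathcal{B})$ but not in $\mathcal{Q}$, so the analysis must be done in the larger group before re-intersecting with $\mathcal{Q}$. I expect this tracking of how new rationals $r$ are forced to appear under iterated conjugation to be the main technical step.
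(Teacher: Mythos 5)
The statement you are addressing is labeled a \emph{conjecture} in the paper: the author proves only that the free rank of $F$ is at least $9$ (by exhibiting, for $M=121$, nine biminimal chains that cannot be reduced to products of one another) and offers heuristic support in Remark 5.8 via the Gaussian-integer analogue $U_{i}$, explicitly ``without proof.'' So there is no proof in the paper to compare yours against, and your proposal must stand on its own. It does not: the entire difficulty of the conjecture is concentrated in the one step you leave unestablished, namely the \emph{existence} of infinitely many elements $w_{N}\in F$ whose reduced words in the free generators $\{\overline{g}[r]\}_{r\in\mathbb{Q}}$ involve pairwise distinct letters $\overline{g}[r_{N}]$. Your justification --- ``the algorithmic structure suggests the same procedure succeeds for each choice of $r_{N}$ provided the divisibility conditions of Definition 5.4 can be arranged'' --- is precisely the open arithmetic problem. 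Whether a biminimal chain through a prescribed generator $\overline{g}[1/p_{N}]$ exists depends on delicate divisibility conditions among the quantities $b^{4}\det g[a/b]=a^{4}+a^{2}b^{2}+b^{4}$ and the reductive factors of Lemma 5.3, and nothing in the paper (nor in your sketch) guarantees these can be satisfied for infinitely many $N$; moreover Algorithm 5.6 is only known to produce \emph{candidates} unless Conjecture 5.5 holds. Your second route, via $\mathrm{PSL}(2,\mathbb{Z})$-conjugation of a single $w_{0}\in F$, you yourself flag as unresolved, and it faces the same issue: one must prove that iterated conjugation forces infinitely many distinct generators $\overline{g}[r]$ to appear, which is not done.

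Two smaller remarks on the parts of your argument that do work. First, be careful with the phrase ``freely independent \dots giving infinite rank'': free independence of an infinite family inside $F$ would \emph{not} by itself show $F$ has infinite rank (a rank-two free group already contains freely independent infinite families, e.g.\ a basis of its commutator subgroup). What saves you is the abelianization argument: if the images of the $w_{N}$ in $\mathcal{Q}^{\mathrm{ab}}$ are $\mathbb{Z}$-linearly independent, then since $F\to\mathcal{Q}^{\mathrm{ab}}$ factors through $F^{\mathrm{ab}}$, the group $F^{\mathrm{ab}}$ cannot be finitely generated, so $F$ has infinite rank. Keep that formulation and drop the appeal to free independence. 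Second, your device for passing from $U=U^{1}$ to $F=\ker\epsilon$ (multiplying by a suitable power of $\overline{g}[0]$, which does not disturb the occurrence of any $\overline{g}[r_{N}]$ with $r_{N}\neq0$) is sound. But neither repair touches the missing existence step, so the proposal is a plausible strategy rather than a proof --- consistent with the statement remaining a conjecture in the paper.
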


While we will not add a detailed explanation here, analyzing the group
$F$ can play a role in approaching the problem of faithfulness of
the Burau representation for $n=4$, which is still open. If $F$
were trivial, this fact could be indirect evidence for the faithfulness.

\noindent \begin{remark}

Let $i$ be the usual imaginary unit. Define a group $\mathcal{Q}_{i}$
in the same manner as Definition 4.3, by extending the base field
$\mathbb{Q}$ to $\mathbb{Q}\left(i\right)$. Also, define a group
$U_{i}$ by
\begin{align*}
U_{i} & :=\mathcal{Q}_{i}\cap\mathrm{PGL}\left(2,\,\mathbb{Z}\left[i\right]\left[t,\,t^{-1}\right]\right).
\end{align*}

Observe that the equation $x^{4}+x^{2}+1=0$ has no solution over
the field $\mathbb{Q}\left(i\right)$, which implies that the determinant
of an elementary generator $g\left[r\right]$ for each $r\in\mathbb{Q}\left(i\right)$
is nonzero. Therefore, we see that $\mathcal{Q}_{i}$ is also freely
generated by the elementary generators, as in the proof of ${\mathrm{Theorem}\;4.9}$
in Section 4. In addition, we mention without proof that $U_{i}$
is free of countably infinite rank, supporting Conjecture 5.7. The
key idea of the proof is that for a Pythagorean triple $\left(a,\,b,\,c\right)$
such that $0<a<b<c$, we have
\begin{align*}
a^{4}\det g\left[\frac{ic}{a}\right] & =c^{4}\det g\left[\frac{ib}{c}\right],
\end{align*}
and the triples $\left(a,\,b,\,c\right)$ such that $c=b+1$ are parametrized
by
\begin{align*}
\left(a_{n},\,b_{n},\,c_{n}\right) & =\left(2n+1,\,2n^{2}+2n,\,2n^{2}+2n+1\right),\;n>0.
\end{align*}

Based on these facts, we found infinitely many biminimal chains in
$U_{i}$, and the largest absolute value of the denominator $b$ in
a symbol $\overline{g}\left[\frac{a}{b}\right]$ in a chain is unbounded.

\noindent \end{remark}

\begin{spacing}{0.9}
\bibliographystyle{amsplain}
\phantomsection\addcontentsline{toc}{section}{\refname}\bibliography{bibgen}

\end{spacing}

$ $

{\small{}Donsung Lee; \href{mailto:disturin@snu.ac.kr}{disturin@snu.ac.kr}}{\small\par}

{\small{}Department of Mathematical Sciences and Research Institute
of Mathematics,}{\small\par}

{\small{}Seoul National University, Gwanak-ro 1, Gwankak-gu, Seoul,
South Korea 08826}{\small\par}

\clearpage{}

\pagebreak{}

\pagenumbering{arabic}

\renewcommand{\thefootnote}{A\arabic{footnote}}
\renewcommand{\thepage}{A\arabic{page}}
\renewcommand{\thetable}{A\arabic{table}}
\renewcommand{\thefigure}{A\arabic{figure}}

\setcounter{footnote}{0} 
\setcounter{section}{0}
\setcounter{table}{0}
\setcounter{figure}{0}
\end{document}